\theoremstyle{plain}
\newtheorem{theorem}{Theorem}[section]
\newtheorem{lemma}[theorem]{Lemma}
\theoremstyle{remark}
\newtheorem{definition}[theorem]{Definition}
\newtheorem*{example}{Example}
\newcommand{\ceil}[1]{\lceil #1 \rceil}
\begin{document}

\begin{frontmatter}
\title{Emergence of Multivariate Extremes in Multilayer Inhomogeneous Random Graphs}
\runtitle{Multivariate Extremes in Multilayer Random Graphs}

\begin{aug}
\author[A]{\fnms{Daniel}~\snm{Cirkovic}\ead[label=e1]{cirkovd@stat.tamu.edu}},
\author[B]{\fnms{Tiandong}~\snm{Wang}\ead[label=e2]{td\textunderscore wang@fudan.edu.cn}}
\and
\author[A]{\fnms{Daren} B.H.~\snm{Cline}\ead[label=e3]{dcline@stat.tamu.edu}}
\address[A]{Department of Statistics,
 Texas A\&M University \printead[presep={,\ }]{e1,e3}}

\address[B]{Shanghai Center for Mathematical Sciences,
Fudan University \printead[presep={,\ }]{e2}}
\end{aug}

\begin{abstract}
In this paper, we propose a multilayer inhomogeneous random graph model (MIRG), whose layers may consist of both single-edge and multi-edge graphs. In the single layer case, it has been shown that the regular variation of the weight distribution underlying the inhomogeneous random graph implies the regular variation of the typical degree distribution. We extend this correspondence to the multilayer case by showing that the multivariate regular variation of the weight distribution implies the multivariate regular variation of the asymptotic degree distribution. Furthermore, in certain circumstances, the extremal dependence structure present in the weight distribution will be adopted by the asymptotic degree distribution. By considering the asymptotic degree distribution, a wider class of Chung-Lu and Norros-Reittu graphs may be incorporated into the MIRG layers. Additionally, we prove consistency of the Hill estimator when applied to degrees of the MIRG that have a tail index greater than 1. Simulation results indicate that, in practice, hidden regular variation may be consistently detected from an observed MIRG. 
\end{abstract}

\begin{keyword}[class=MSC]
\kwd[Primary ]{05C80}
\kwd{60G70}
\kwd[; secondary ]{05C82}
\kwd{60F05}
\end{keyword}

\begin{keyword}
\kwd{Multilayer Networks}
\kwd{Inhomogeneous Random Graphs}
\kwd{Multivariate Regular Variation}
\kwd{Tail Estimation}
\end{keyword}

\end{frontmatter}

\section{Introduction}

The scale-free phenomenon, or the notion that many real-world networks tend to exhibit power-law degree distributions, is a fundamental hypothesis in network science that is supported by significant amount of empirical evidence \cite{mislove2007measurement, voitalov2019scale}. In fact, the prominence of scale-free networks is so well-established that estimates of the power-law tail index regularly appear as summary statistics for networks listed in network repositories such as KONECT \cite{kunegis2013konect}. Such findings have led researchers to develop network models that imitate scale-free behavior. Prominent examples include the preferential attachment model and the inhomogeneous random graph \cite{barabasi1999emergence, newman2003structure, bollobas2007phase, van2013critical}. 

Often research in network science makes inquires regarding large degree-degree relationships when edges are of differing types. For example, in a X (formerly Twitter) network, does a user having a large number of followers indicate that they may receive have a large number of likes or retweets? Similarly, in forum-like websites such as Reddit, is a large number of links in one sub-forum necessarily associated with a large number of links in another sub-forum? How does this relationship vary by sub-forum type? Such questions are naturally analyzed in a multilayer network setting.

We study large degree-degree relationships in a multilayer inhomogeneous random graph (MIRG) model through the lens of multivariate regular variation (MRV). Through MRV, we are able to describe the extremal dependence structure between the layer-wise degrees for a given node; see \cite{resnick2007heavy, kulik2020heavy} for more details on MRV. For single-layer inhomogeneous random graphs, a latent weight is associated to each node that confers its connectivity in the network. \cite{bhattacharjee2022large} prove that, for a large class of inhomogeneous random graphs, if the weight distribution is univariate reguarly varying, then so is the typical degree distribution. We extend this correspondence to the multilayer setting. Here, each node is associated a weight \textit{vector} describing its connectivity in each layer of the network. We show that if the distribution of the weight vectors are multivariate regularly varying, then so is the multivariate \textit{asymptotic} degree distribution for a specified class of Norros-Reittu and Chung-Lu models. Furthermore, if the weight vector distribution has hidden regular variation (HRV), a second-order notion of MRV, then for cases where the HRV index falls within a certain range, the degree distribution also has the HRV property; see \cite{das2013living, das2015models, das2017hidden} for detailed discussion on HRV. In essence, for large networks, the extremal dependence structure exhibited by the weights may also be shared by the layer-wise degrees under certain circumstances. 

As alluded to previously, our approach differs in two key ways from \cite{bhattacharjee2022large}. First, we restrict our study of inhomogeneous graphs to ones that are similar to the Norros-Reittu and Chung-Lu graphs \cite{norros2006conditionally, chung2002average, chung2002connected}. In particular, we allow the layers of the MIRG to consist of both single-edge and multi-edge graphs. This is an especially important consideration; most multilayer networks of interest consist of both single-edge and multi-edge layers. Returning to our X example, a user may only follow another user one time, while they may retweet or like another user multiple times. Hence, the follower layer is a single-edge graph, whereas the like and retweet layers allow multiple edges between two nodes. The class of Norros-Reittu and Chung-Lu graphs considered herein also extend the single layer results in \cite{bhattacharjee2022large} to graphs such as the generalized random graph \cite{britton2006generating}. Secondly, in order to consider this extended class of Norros-Reittu and Chung-Lu graphs, we study multivariate regular variation of the \textit{asymptotic} degree distribution rather than the typical degree distribution. Modern problems in network science typically involve analyzing networks with millions or billions of nodes, hence the study of the limiting degree distribution should accurately describe the extremal behavior of observed networks \cite{newman2003structure}. 

Of particular interest for scale-free networks is the estimation of the power-law tail index. In addition to being an important object of study on its own, inference on the tail index can further inform more complex statistical procedures for scale-free networks \cite{cirkovic2024modeling, cirkovic2023preferential}. Hence, we additionally consider consistency of the well-known Hill estimator for the tail index in the MIRG model. Consistency of the Hill estimator is well-established in the iid and some time series settings \cite{resnick2007heavy, kulik2020heavy}. In the network setting, however, verifying consistency of the Hill estimator is a considerably more difficult task due to the degree dependence present in most datasets. In both preferential attachment networks and single-layer inhomogeneous random graphs, however, consistency of the Hill estimator has been shown \cite{wang2019consistency, bhattacharjee2022large}. In both cases, consistency is obtained by approximating the degree sequence by a sequence of independent processes;  branching proccesses in the case of preferential attachment graphs and latent weights for inhomogeneous random graphs \cite[see][for more details]{wang2019consistency, bhattacharjee2022large}. We follow a similar, though in some cases more general approach for the MIRG model. For the Chung-Lu random graph, \cite{bhattacharjee2022large} prove consistency of the Hill estimator when the tail index is strictly larger than $2$. By imposing a reasonable restriction on the number of order statistics used in the Hill estimator, we are able to extend the consistency results in the Chung-Lu model, among others, for tail indices strictly larger than $1$. 

Section \ref{sec:model} introduces the MIRG, along with the asymptotic properties of its degrees. Section \ref{sec:rv} provides a short introduction to MRV and HRV, then proves a correspondence between MRV/HRV of the latent weight vectors and MRV/HRV of the degree vectors. The Hill estimator is introduced and its consistency in the MIRG model is considered in Section \ref{sec:hill}. Section \ref{sec:sims} provides a short simulation study exhibiting that MRV and HRV are detectable in practically-sized networks generated from the MIRG model, as well as evidence suggesting that a restriction of the number of order statistics used in the Hill estimator is necessary to achieve consistency. Section \ref{sec:conc} discusses future research questions and potential extensions raised by the theory developed herein. Finally, in Section \ref{sec:proofs} we introduce tools from extreme value theory and random networks used to develop the main results along with their proofs. 

\subsection{Notation}

For $n \in \mathbb{N}$, we use the notation $[n]$ to denote the collection $\{1, 2, \dots, n \}$. Vectors will be denoted in boldface. If $\mathbf{z}$ is a random vector in $\mathbb{R}^p$, we let the math-italics $z_i$ denote the value of its $i$-th entry, $i \in [p]$. The zero vector in $\mathbb{R}^p$ is denoted as $\mathbf{0}$ regardless of the dimension $p$. Let $\mathbf{x}_1, \mathbf{x}_2, \dots, \mathbf{x}_n$ be vectors and $\mathcal{I} = \{i_1, i_2, \dots, i_r\} \subset [n]$, $r \leq n$. The notation $\mathbf{x}_{\mathcal{I}}$ is then used to denote the collection of vectors $\{\mathbf{x}_{i_1}, \mathbf{x}_{i_2}, \dots, \mathbf{x}_{i_r}\}$. For real numbers $a$ and $b$, let $a \wedge b$ and $a \vee b$ denote the minimum and maximum of $a$ and $b$, respectively.

\subsection{Multilayer inhomogeneous random graph model}\label{sec:model}

In this section we introduce the multilayer inhomogeneous random graph (MIRG) model. Suppose $\mathbf{A}(n) = \{A_{ijl} \}_{ijl}$ is an $n \times n \times L$ adjacency cube of $n$ nodes and $L$ layers. That is, $A_{ijl}$ represents the number of edges between nodes $i$ and $j$ in layer $l$. For every $l \in [L]$, $\{A_{ijl}\}_{ij}$ is a symmetric matrix; each layer of the MIRG is an undirected (multi)graph that allows self-loops. We let $\mathcal{L}_1 \subset [L]$ collect the layers of $\mathbf{A}(n)$ that are multigraphs (i.e. allow multiple edges) and let $\mathcal{L}_2 = [L] \setminus \mathcal{L}_1$ refer to the layers which are graphs (i.e. allow at most one edge between two nodes).

Let $\mathbf{W}_{[n]} = \{ \mathbf{W}_i \}_{i = 1}^n$ be i.i.d. random vectors in $\mathbb{R}_+^L$ with a common continuous distribution. We often refer to the $\mathbf{W}_{[n]}$ as weight vectors. We may think of $W_{il}$ as some latent attribute that conveys the connectivity of node $i$ in layer $l$. In most applications, a larger value of $W_{il}$ indicates that node $i$ will attract more edges in layer $l$, though the MIRG model permits one to augment the extent to which that is true. For each layer $l \in [L]$, we let $T_l(n) = \sum_{i = 1}^n W_{il}$ denote the sum of the weights in layer $l$.

Conditional on the latent weight vectors $\mathbf{W}_{[n]}$, the MIRG models the multigraph and graph layers similarly. For $l \in \mathcal{L}_1$, we assume
\begin{align}
\label{eq:NR}
 A_{ijl} \mid \mathbf{W}_{[n]} \overset{\text{ind}}{\sim} \text{Poisson}\left(g_l(W_{il}W_{jl}/{T_l(n)}) \right) \qquad \text{for } 1 \leq i \leq j \leq n,
\end{align}
and for $l \in \mathcal{L}_2$ we have
\begin{align}
\label{eq:Ber}
A_{ijl} \mid \mathbf{W}_{[n]} \overset{\text{ind}}{\sim} \text{Bernoulli}\left(g_l(W_{il}W_{jl}/{T_l(n)}) \right) \qquad \text{for } 1 \leq i \leq j \leq n,
\end{align}
Here, $g_l: \mathbb{R}^+ \rightarrow \mathbb{R}^+$ allows for heterogeneity in the layer-wise connection probabilities. For $l \in \mathcal{L}_2$, we naturally restrict the range of $g_l$ to the interval $[0, 1]$. Typically $g_l$ is chosen to satisfy $g_l(0) = 0$. Practical choices of $g_l$ are often monotonically increasing and concave, indicating that a larger weight contributes to more connections, although with diminishing returns.  When $g_l$ is the identity function for $l \in \mathcal{L}_1$, the multigraph layers are easily recognized as the Norros-Reittu (NR) random multigraph \cite{norros2006conditionally}. For $l \in \mathcal{L}_2$, $g_l(x) = x \wedge 1$ corresponds to the Chung-Lu random graph and $g_l(x) = x/(1 + x)$ corresponds to a version of the generalized random graph (GRG) \cite{britton2006generating, chung2002average, chung2002connected, van2008universality}. The theory presented herein covers these choices of $g_l$, and more general models as well.

Define the degree of node $i$ in layer $l$ to be $D_{il}(n) = \sum_{j = 1}^n A_{ijl}$. From \eqref{eq:NR} and the conditional independence of the $A_{ijl}$, it is easily seen that for $l \in \mathcal{L}_1$
\begin{align}
\label{eq:degreecondNR}
D_{il}(n) \mid \mathbf{W}_{[n]} \sim \text{Poisson}\left(\sum_{j = 1}^n g_l(W_{il}W_{jl}/{T_l(n)})\right), \qquad \text{for }  i = 1,\dots, n,
\end{align}
and for $l \in \mathcal{L}_2$
\begin{align}
\label{eq:degreecondBer}
D_{il}(n) \mid \mathbf{W}_{[n]} \sim \text{Poisson-binomial}\left(g_l(W_{il}W_{1l}/{T_l(n)}), \dots, g_l(W_{il}W_{nl}/{T_l(n)})\right),  
\end{align}
for $i = 1,\dots, n$ where the Poisson-binomial distribution is classically defined as the sum of independent Bernoulli random variables \cite{tang2023poisson}. For understanding properties of the degree distribution, it is far easier to work with \eqref{eq:degreecondNR} due to the availability of results regarding the Poisson distribution. Results derived from \eqref{eq:degreecondNR} are then typically translated to results for \eqref{eq:degreecondBer} using the maximal coupling of Poisson and Bernoulli random variables \cite[see Section 2.2 of][]{van2017random}. We follow a similar strategy, though in some cases we must work with \eqref{eq:degreecondBer} alone. 

We now introduce the only non-trivial condition on the functions $\{g_l\}_{l \in [L]}$ in the MIRG model. A similar assumption is made in \cite{van2008universality} when studying the distances in finite-variance inohomogeneous random graphs. Throughout, we denote the three main conditions we build upon as (C1), (C2) and (C3), the latter two of which will be introduced as needed.
\begin{itemize}
\item[\textbf{(C1)}] Each $g_l$ satisfies
\begin{align*}
g_l(x) = c_lx + O(x^{1 + \nu}) \qquad \text{as } x \downarrow 0, \text{ for some } \nu > 0,
\end{align*}
for constants $c_l > 0$.
\end{itemize}
From (C1), we may find constants $M, \delta > 0$, not depending on $l$ such that 
\begin{align}
\label{eq:glin}
|g_l(x) - c_lx| \leq M x^{1 + \nu} \qquad \text{for all } x < \delta \text{ and } l \in [L],
\end{align}
a fact we repeatedly use. Condition (C1) states for every $l \in [L]$, $g_l(x)$ can be approximated by a scalar multiple of $x$ when $x$ is small. This, combined with a finite mean assumption on the weights, allows us to approximate
\begin{align}
\label{eq:degapprox}
\sum_{j = 1}^ng_l(W_{il}W_{1l}/{T_l(n)}) \approx \sum_{j = 1}^n c_l W_{il}W_{1l}/{T_l(n)} = c_l W_{il},
\end{align}
which combined with \eqref{eq:degreecondNR} suggests that for large networks, we may be able to approximate the degree of node $i$ in layer $l$ by its weight. Furthermore, \eqref{eq:degapprox} suggests that the degrees may be approximately independent, a key ingredient in proving the consistency of the Hill estimator in the MIRG model. 

We now describe the behavior of the degree distribution in the MIRG model. Let $\mathbf{D}_i(n)$ be a random vector in $\mathbb{R}^{+}_L$ collecting the degrees of node $i$ in each layer. From \eqref{eq:degreecondNR} and \eqref{eq:degreecondBer}, it is seen that the degrees across nodes are identically distributed. Thus, throughout we only consider the distribution of $\mathbf{D}_1(n)$. The asymptotic distribution of $\mathbf{D}_1(n)$ is presented in Lemma \ref{lem:typdeg}. In order to adequately describe the asymptotic distribution, we define the random variable $\boldsymbol{\mathcal{D}}_1 \in \mathbb{R}_+^L$ which, when conditioned on $\mathbf{W}_1$, has conditionally independent Poisson entries with element-wise means $(c_1W_{11}, \dots, c_LW_{1L})$. Results  similar to Lemma \ref{lem:typdeg} have been proved in the single-layer network case (see Theorem 3.13 of \cite{bollobas2007phase} or Corollary 6.11 of \cite{van2017random}, for example), though we present the result and its proof for completeness. Lemma \ref{lem:typdeg} is proven in Section \ref{sec:typdegproof}.

\begin{lemma}
\label{lem:typdeg}
Suppose $\mathbb{E}[\|\mathbf{W}_1\|_1] < \infty$ and $\mathbf{D}_1(n)$ is generated from a MIRG satisfying (C1). Then for each $\mathbf{m} \in \mathbb{Z}_+^L$
\begin{align*}
\mathbb{P}\left(\mathbf{D}_1(n) = \mathbf{m} \right) \rightarrow \mathbb{P}\left(\boldsymbol{\mathcal{D}}_1 = \mathbf{m} \right) \equiv \mathbb{E}\left[ \prod_{l = 1}^L \frac{\left(c_lW_{1l}\right)^{m_l}e^{-c_lW_{1l}}}{m_l!} \right],
\end{align*}
as $n \rightarrow \infty$.
\end{lemma}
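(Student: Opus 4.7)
The plan is to condition on $\mathbf{W}_{[n]}$, establish pointwise a.s.\ convergence of the conditional probability $\mathbb{P}(\mathbf{D}_1(n)=\mathbf{m}\mid\mathbf{W}_{[n]})$, and then integrate out via dominated convergence. Under the implicit assumption that the edge variables $\{A_{ij l}\}$ are conditionally independent across layers (as well as within each layer) given $\mathbf{W}_{[n]}$, the coordinates of $\mathbf{D}_1(n)\mid\mathbf{W}_{[n]}$ are conditionally independent, so the joint conditional probability factors as $\prod_{l=1}^L \mathbb{P}(D_{1l}(n)=m_l\mid\mathbf{W}_{[n]})$ and each layer can be handled separately.

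For $l\in\mathcal{L}_1$, \eqref{eq:degreecondNR} gives $D_{1l}(n)\mid\mathbf{W}_{[n]}\sim\text{Poisson}(\mu_{l,n})$ with $\mu_{l,n}:=\sum_{j=1}^n g_l(x_{jl,n})$ and $x_{jl,n}:=W_{1l}W_{jl}/T_l(n)$; note $\sum_j x_{jl,n}=W_{1l}$ exactly. The strong law yields $T_l(n)/n\to\mathbb{E}[W_{1l}]>0$ a.s., and a standard Borel-Cantelli argument (using $\mathbb{E}[W_{1l}]<\infty$) gives $\max_{j\leq n}W_{jl}/n\to 0$ a.s., hence $\max_j x_{jl,n}\to 0$ a.s. For $n$ large enough all $x_{jl,n}<\delta$, so \eqref{eq:glin} implies
\begin{align*}
|\mu_{l,n}-c_l W_{1l}|\leq M\sum_{j=1}^n x_{jl,n}^{1+\nu}\leq M\, W_{1l}\bigl(\max_j x_{jl,n}\bigr)^\nu\to 0 \quad\text{a.s.}
\end{align*}
By continuity of Poisson point probabilities in the parameter, $\mathbb{P}(D_{1l}(n)=m_l\mid\mathbf{W}_{[n]})\to(c_lW_{1l})^{m_l}e^{-c_lW_{1l}}/m_l!$ a.s.

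For $l\in\mathcal{L}_2$, \eqref{eq:degreecondBer} gives a Poisson-binomial with the same success probabilities $g_l(x_{jl,n})$. Le Cam's inequality bounds its total variation distance to $\text{Poisson}(\mu_{l,n})$ by $\sum_j g_l(x_{jl,n})^2\leq \mu_{l,n}\max_j g_l(x_{jl,n})\to 0$ a.s., so the same pointwise limit holds. Multiplying across layers and invoking bounded (hence dominated) convergence on the conditional probabilities delivers the unconditional limit stated in the lemma.

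The main technical point is the a.s.\ vanishing of $\max_j x_{jl,n}$, which requires $\max_{j\leq n}W_{jl}/n\to 0$ a.s.\ under only a first-moment hypothesis. This does not follow from $W_{jl}/n\to 0$ alone, but from the fact that for each $\epsilon>0$, $\sum_j\mathbb{P}(W_{1l}>\epsilon j)\leq\mathbb{E}[W_{1l}]/\epsilon<\infty$, so Borel-Cantelli gives $W_{jl}\leq\epsilon j$ for all but finitely many $j$, yielding $\max_{j\leq n}W_{jl}\leq\epsilon n$ eventually. Once this is secured, the $(1+\nu)$-power remainder in (C1) absorbs the extra $(\max_j x_{jl,n})^\nu$ factor and the remaining steps are routine.
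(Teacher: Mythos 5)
Your proof is correct and follows essentially the same route as the paper: condition on the weights, factor the conditional probability across layers, handle the $\mathcal{L}_1$ layers by convergence of the conditional Poisson rate to $c_lW_{1l}$, approximate the $\mathcal{L}_2$ layers by a Poisson law in total variation, and conclude by dominated convergence. The only cosmetic differences are that you establish almost sure (rather than in-probability) convergence of the conditional probabilities via a Borel--Cantelli argument for $\max_{j\le n}W_{jl}/n$, and you invoke Le Cam's inequality where the paper uses the equivalent maximal-coupling bound $\sum_{j=1}^n g_l^2\left(W_{1l}W_{jl}/T_l(n)\right)$ together with Lemma \ref{lem:sumgsqr}.
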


\section{Regular variation of the MIRG model}\label{sec:rv}

\subsection{Preliminaries on MRV and HRV}

In this section we provide a short overview of the multivariate regular variation. These are important tools to characterize the extremal dependence structure between elements of a random vector. Suppose we are working in the metric space $(\mathbb{R}_+^L, d_p)$, $L \geq 1$, where $d_p(\mathbf{x}, \mathbf{y}) = \| \mathbf{x} - \mathbf{y} \|_p$ is the $\ell_p$ norm for any $p\geq 1$ and $\mathbf{x}, \mathbf{y} \in \mathbb{R}_+^L$.  Let $\mathbb{C}_0 \subset \mathbb{C}$, be two closed cones in $\mathbb{R}_+^L$. The theoretical foundation of regularly varying measures is given by $\mathbb{M}$-convergence on $\mathbb{C}\setminus \mathbb{C}_0$, which we provide in Definition \ref{def:Mconv} \citep[see][for more on regular variation]{basrak2019note, hult2006regular, kulik2020heavy, lindskog2014regularly}. 

\begin{definition}
\label{def:Mconv}
Let $\mathbb{M}(\mathbb{C} \setminus \mathbb{C}_0)$ be the set of Borel measures on $\mathbb{C} \setminus \mathbb{C}_0$ which are finite on sets bounded away from $\mathbb{C}_0$. Let $\mathcal{C}(\mathbb{C} \setminus \mathbb{C}_0)$ denote the set of all continuous, bounded and non-negative functions on $\mathbb{C} \setminus \mathbb{C}_0$ whose supports are bounded away from $\mathbb{C}_0$. Then for $\mu_n, \mu \in \mathbb{M}(\mathbb{C} \setminus \mathbb{C}_0)$, we say that that $\mu_n \rightarrow \mu$ in $\mathbb{M}(\mathbb{C}\setminus \mathbb{C}_0)$ if
\begin{align*}
\int f d\mu_n \rightarrow \int f d\mu,
\end{align*}
for all $f \in \mathcal{C}(\mathbb{C} \setminus \mathbb{C}_0)$.
\end{definition}
Without loss of generality, we may take the functions in $\mathcal{C}(\mathbb{C} \setminus \mathbb{C}_0)$ to be uniformly continuous as well. Here, $f$ is uniformly  continuous if the modulus of continuity
\begin{align*}
\Delta_{f}^p(\delta) = \sup_{\mathbf{x}, \mathbf{y} \in \mathbb{C} \setminus \mathbb{C}_0} \left\lbrace \left|f(\mathbf{x}) - f(\mathbf{y}) \right|: \|\mathbf{x} - \mathbf{y} \|_p < \delta \right\rbrace,
\end{align*}
is such that $\Delta_{f}^p(\delta) \rightarrow 0$ as $\delta \downarrow 0$. Note that Definition \ref{def:Mconv} is equivalent to requiring that $\lim_{n \rightarrow \infty} \mu_n(A) = \mu(A)$ for all $\mu$-continuity Borel sets $A$ bounded away from $\mathbb{C}_0$ \citep[see][Theorem 2.1]{lindskog2014regularly}.  With $\mathbb{M}$-convergence in hand, we may now formally define a regularly varying distribution function in the case of $\mathbb{C} = \mathbb{R}_+^d$ and $\mathbb{C}_0 = \{\mathbf{0} \}$. 

\begin{definition}
\label{def:MRV}
We say that the distribution $\mathbb{P}\left(\mathbf{Z} \in \cdot \right)$ of a random vector $\mathbf{Z}$ on $\mathbb{R}_+^L$, $L \geq 1$, is (standard) regularly varying on $\mathbb{R}_+^L\setminus\{ \mathbf{0}\}$ with index $\alpha > 0$ if there exists some regularly varying scaling function $b(t)$ with index $1/\alpha$ and a limit measure $\nu \in \mathbb{M}(\mathbb{R}_+^L\setminus\{ \mathbf{0}\})$ such that as $t \rightarrow \infty$
\begin{align}
t\mathbb{P}\left( \mathbf{Z}/b(t) \in \cdot \right) \rightarrow \nu(\cdot), \qquad \text{in } \mathbb{M}(\mathbb{R}_+^L\setminus\{ \mathbf{0}\}).
\end{align}
If $\mathbb{P}\left(\mathbf{Z} \in \cdot \right)$ is regularly varying, we may write $\mathbb{P}\left(\mathbf{Z} \in \cdot \right) \in \text{MRV}(\alpha, b(t), \nu, \mathbb{R}_+^L\setminus\{ \mathbf{0}\})$.
\end{definition}

In applications, extreme values often occur in more than one risk region. Moreover, these risk regions may exhibit subtle behavior that may be overlooked when estimating tail probabilities by assuming just one degree of regular variation.  When the limit measure $\nu$ concentrates on a subcone of $\mathbb{R}_+^L\setminus\{ \mathbf{0}\}$, we may seek additional regular variation outside of the subcone by employing hidden regular variation as in Definition \ref{def:HRV}.

\begin{definition}
\label{def:HRV}
We say that the regularly varying distribution $\mathbb{P}\left(\mathbf{Z} \in \cdot \right)$ on $\mathbb{R}_+^L \setminus \{\mathbf{0} \}$ has hidden regular variation on $\mathbb{R}_+^L \setminus \mathbb{C}_0$ is there exists $0 < \alpha \leq \alpha_0$, scaling functions $b(t) \in \text{RV}_{1/\alpha}$, $b_0(t) \in \text{RV}_{1/\alpha_0}$ with  $b(t)/b_0(t) \rightarrow \infty$ and limit measures $\nu$, $\nu_0$ such that 
\begin{align}
\label{eq:HRV}
\mathbb{P}\left( \mathbf{Z} \in \cdot \right) \in \text{MRV}(\alpha, b(t), \nu, \mathbb{R}_+^L\setminus\{ \mathbf{0}\}) \cap \text{MRV}(\alpha_0, b_0(t), \nu_0, \mathbb{R}_+^L\setminus\mathbb{C}_0),
\end{align}
or, in other words, $\mathbb{P}\left( \mathbf{Z} \in \cdot \right) \in \text{MRV}(\alpha, b(t), \nu, \mathbb{R}_+^L\setminus\{ \mathbf{0}\})$ and $\mathbb{P}\left( \mathbf{Z} \in \cdot \right) \in \text{MRV}(\alpha_0, b_0(t), \nu_0, \mathbb{R}_+^L\setminus\mathbb{C}_0)$.
\end{definition}

For purposes of statistical inference, it is often convenient to express the limit measure $\nu_0$ in terms of generalized polar coordinates \cite{das2013living, lindskog2014regularly}. For the unit sphere with respect to the forbidden zone $\mathbb{C}_0$ defined by $\aleph_{\mathbb{C}_0} = \left\lbrace \mathbf{x} \in   \mathbb{R}_+^L\setminus\mathbb{C}_0 : d_p(\mathbf{x}, \mathbb{C}_0) = 1 \right\rbrace$, we may further define the generalized polar coordinate transform GPOLAR:  $\mathbb{R}_+^L\setminus\mathbb{C}_0 \mapsto (0, \infty) \times \aleph_{\mathbb{C}_0}$ by
\begin{align*}
\text{GPOLAR}(\mathbf{x}) = \left(d_p(\mathbf{x}, \mathbb{C}_0), \frac{\mathbf{x}}{d_p(\mathbf{x}, \mathbb{C}_0)} \right).
\end{align*}
With the GPOLAR transformation in hand, we may rewrite \eqref{eq:HRV} as
\begin{align}
\label{eq:GPOLAR}
t\mathbb{P}\left( \left(\frac{d_p(\mathbf{Z}, \mathbb{C}_0)}{b_0(t)}, \frac{\mathbf{Z}}{d_p(\mathbf{Z}, \mathbb{C}_0)}\right) \in \cdot \right) \rightarrow (\nu_{\alpha_0} \times S_0)(\cdot) \qquad \text{in } \mathbb{M}((0, \infty) \times \aleph_{\mathbb{C}_0}),
\end{align}
where $\nu_{\alpha_0}(x, \infty) = x^{-\alpha_0}$ for $x > 0$ and $S_0$ is a probability measure on $\aleph_{\mathbb{C}_0}$. The convergence \eqref{eq:GPOLAR} immediately implies that $d_p(\mathbf{Z}, \mathbb{C}_0)$ has a regularly varying distribution function with tail index $\alpha_0$. 

\subsection{MRV and HRV in the MIRG model}\label{sec:MRVMIRG}

In this section we relate regular variation of the weights to regular variation of the degrees in the MIRG model. For a large class of inhomogeneous random graphs (i.e. single-layer), \cite{bhattacharjee2022large} find that regular variation of the weight distribution implies regular variation of the typical degree distribution. It is important to note that this is a finite-sample result; regular variation of the typical degree distribution is achieved irrespective of the network size. Modern network science, however, often deals with networks that have an extremely large number of vertices \cite{newman2003structure}. Thus, it is also reasonable to instead consider properties of the asymptotic degree distribution. Inspection of the asymptotic degree distribution also permits the study of a range of models beyond the Chung-Lu or Norros-Reittu random graphs as mentioned in Section \ref{sec:model}.

As elucidated in Theorem \ref{thm:rv}, we find that multivariate regular variation of the weight distribution implies multivariate regular variation of the asymptotic degree distribution. Furthermore, the asymptotic degree distribution inherits the same scaling function and limit measure as the (scaled) weight distribution. Here, we define $\boldsymbol{\mathcal{W}}_1 = (\mathcal{W}_{11}, \dots, \mathcal{W}_{1L}) \equiv (c_1W_{11}, \dots, c_LW_{1L})$ and $\boldsymbol{\mathcal{D}}_1$ as in Lemma \ref{lem:typdeg}. In addition, from Definition \ref{def:HRV}, it is clear that hidden regular variation of the weight distribution also implies hidden regular variation of the degree distribution, as long as the hidden regular variation is not too subtle. Theorem \ref{thm:rv} is proven in Section \ref{sec:rvproof}.

\begin{theorem}
\label{thm:rv}
Let $\alpha > 0$ and $\alpha_0 \in [\alpha, 2\alpha)$. 
\begin{itemize}
\item[(a)] If $\mathbb{P}\left(\boldsymbol{\mathcal{W}}_1 \in \cdot \right) \in \text{MRV}(\alpha, b(t), \nu, \mathbb{R}_+^L\setminus \{\mathbf{0}\})$, for some scaling function $b(t) \in RV_{1/\alpha}$ and limit measure $\nu$, then $\mathbb{P}\left(\boldsymbol{\mathcal{D}}_1 \in \cdot \right) \in \text{MRV}(\alpha, b(t), \nu, \mathbb{R}_+^L\setminus \{\mathbf{0}\})$.
\item[(b)] If $\mathbb{P}\left(\boldsymbol{\mathcal{W}}_1 \in \cdot \right) \in \text{MRV}(\alpha, b(t), \nu, \mathbb{R}_+^L\setminus  \{\mathbf{0}\})\cap\text{MRV}(\alpha_0, b_0(t), \nu_0, \mathbb{R}_+^L\setminus \mathbb{C}_0)$, for scaling functions $b(t) \in RV_{1/\alpha}$, $b_0(t) \in RV_{1/\alpha_0}$ with $b(t)/b_0(t) \rightarrow \infty$ as $t \rightarrow \infty$, limit measures $\nu$, $\nu_0$, and a closed cone $\mathbb{C}_0$, then $\mathbb{P}\left(\boldsymbol{\mathcal{D}}_1 \in \cdot \right) \in \text{MRV}(\alpha, b(t), \nu, \mathbb{R}_+^L\setminus \{\mathbf{0}\})\cap\text{MRV}(\alpha_0, b_0(t), \nu_0, \mathbb{R}_+^L\setminus \mathbb{C}_0)$.
\end{itemize}
\end{theorem}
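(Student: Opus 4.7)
The plan is to exploit the test-function characterization of $\mathbb{M}$-convergence from Definition~\ref{def:Mconv}: for part (a) it suffices to show that for every bounded, nonnegative, uniformly continuous $f$ with $\mathrm{supp}(f) \subset \{\|\mathbf{x}\|_p > \eta\}$, one has $t|\mathbb{E}[f(\boldsymbol{\mathcal{D}}_1/b(t))] - \mathbb{E}[f(\boldsymbol{\mathcal{W}}_1/b(t))]| \to 0$; since the MRV hypothesis on $\boldsymbol{\mathcal{W}}_1$ directly yields $t\mathbb{E}[f(\boldsymbol{\mathcal{W}}_1/b(t))] \to \int f\,d\nu$, this will prove the result. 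The structural observation I would exploit is that, conditional on $\boldsymbol{\mathcal{W}}_1$, the residual $\boldsymbol{R}:=\boldsymbol{\mathcal{D}}_1 - \boldsymbol{\mathcal{W}}_1$ has independent mean-zero components with componentwise variance $\mathcal{W}_{1l}$, so the Poisson fluctuation $\|\boldsymbol{R}\|_p$ is on the scale $\sqrt{\|\boldsymbol{\mathcal{W}}_1\|_1}$, which in the regular-variation regime should be $o(b(t))$.

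To make this precise, I would fix $\delta \in (0, \eta/4)$ and use a three-way partition. If $\|\boldsymbol{R}\|_p < \delta b(t)$ and $\|\boldsymbol{\mathcal{W}}_1\|_p \leq \eta b(t)/2$, the triangle inequality forces both $\boldsymbol{\mathcal{D}}_1/b(t)$ and $\boldsymbol{\mathcal{W}}_1/b(t)$ out of $\mathrm{supp}(f)$, so the contribution is zero; if $\|\boldsymbol{R}\|_p < \delta b(t)$ but $\|\boldsymbol{\mathcal{W}}_1\|_p > \eta b(t)/2$, uniform continuity bounds the difference by $\Delta_f^p(\delta)$; on $\{\|\boldsymbol{R}\|_p \geq \delta b(t)\}$ the crude bound $2\|f\|_\infty$ applies. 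Combining,
$$t\bigl|\mathbb{E}[f(\boldsymbol{\mathcal{D}}_1/b(t))] - \mathbb{E}[f(\boldsymbol{\mathcal{W}}_1/b(t))]\bigr| \leq \Delta_f^p(\delta)\, t\mathbb{P}\bigl(\|\boldsymbol{\mathcal{W}}_1\|_p > \tfrac{\eta}{2} b(t)\bigr) + 2\|f\|_\infty\, t\mathbb{P}\bigl(\|\boldsymbol{R}\|_p \geq \delta b(t)\bigr).$$
The first summand has finite limit by MRV of $\boldsymbol{\mathcal{W}}_1$ and vanishes at the end by sending $\delta \downarrow 0$.

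The bulk of the work is then the bound $t\mathbb{P}(\|\boldsymbol{R}\|_p \geq \delta b(t)) \to 0$, for which I would further split on $\|\boldsymbol{\mathcal{W}}_1\|_\infty$. On the extreme regime $\{\|\boldsymbol{\mathcal{W}}_1\|_\infty > A b(t)\}$, MRV of $\boldsymbol{\mathcal{W}}_1$ yields $\limsup_t t\mathbb{P}(\|\boldsymbol{\mathcal{W}}_1\|_\infty > Ab(t)) = \nu\{\|\mathbf{x}\|_\infty > A\}$, which tends to $0$ as $A \to \infty$ since $\nu$ is Radon on $\mathbb{R}_+^L \setminus \{\mathbf{0}\}$. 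On the moderate regime $\{\|\boldsymbol{\mathcal{W}}_1\|_\infty \leq A b(t)\}$, using $\|\boldsymbol{R}\|_p \leq L^{1/p}\|\boldsymbol{R}\|_\infty$ and applying Bennett's inequality componentwise conditional on $\boldsymbol{\mathcal{W}}_1$ gives
$$\mathbb{P}\bigl(\|\boldsymbol{R}\|_p \geq \delta b(t) \bigm| \boldsymbol{\mathcal{W}}_1\bigr) \leq 2L \exp\bigl(-c(A,\delta,L)\,b(t)\bigr)$$
uniformly on that regime; because $b(t) \in RV_{1/\alpha}$ grows super-logarithmically, $t$ times this vanishes. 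Choosing $A$ large, then $t \to \infty$, then $\delta \downarrow 0$, completes part (a).

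For part (b), I would run the same partition on $\mathbb{R}_+^L \setminus \mathbb{C}_0$, substituting $b_0(t)$ for $b(t)$ and $d_p(\cdot,\mathbb{C}_0)$ for $\|\cdot\|_p$ in the support bookkeeping (the modulus $\Delta_f^p$ is still measured in $\|\cdot\|_p$, and since $d_p(\cdot,\mathbb{C}_0)$ is $1$-Lipschitz in $\|\cdot\|_p$, the triangle-inequality step goes through verbatim). The only genuinely new issue is the Bennett bound: the deviation scale is now $\delta b_0(t)$ while the conditional variances on the moderate regime remain of size at most $Ab(t)$, so the exponent becomes proportional to $-\delta^2 b_0(t)^2 / b(t)$. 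Since $b_0(t)^2/b(t) \in RV_{2/\alpha_0 - 1/\alpha}$, this diverges with positive index precisely when $\alpha_0 < 2\alpha$, which is exactly the hypothesis. The scale matching between the intrinsic Poisson noise of size $\sqrt{b(t)}$ and the HRV scale $b_0(t)$ is the principal obstacle of the proof, and it is exactly what forces the range $\alpha_0 \in [\alpha, 2\alpha)$ in the statement.
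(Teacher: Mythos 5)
Your proposal is correct, and it reaches the conclusion by a genuinely different route in the key technical estimate. Both you and the paper reduce the problem to showing $t\left|\mathbb{E}[f(\boldsymbol{\mathcal{D}}_1/b_0(t))] - \mathbb{E}[f(\boldsymbol{\mathcal{W}}_1/b_0(t))]\right| \rightarrow 0$ for uniformly continuous test functions supported away from the forbidden cone, and both partition according to whether the Poisson fluctuation $\|\boldsymbol{\mathcal{D}}_1 - \boldsymbol{\mathcal{W}}_1\|_p$ is small relative to the scaling. The difference is in how the event $\{\|\boldsymbol{\mathcal{D}}_1 - \boldsymbol{\mathcal{W}}_1\|_p \geq \delta b_0(t)\}$ is controlled. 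The paper uses polynomial moment bounds: the $m$-th central moment of a Poisson$(\lambda)$ is $O(\lambda^{m/2})$ (its Lemmas \ref{lem:mbound} and \ref{lem:mnorm}), so after Markov's inequality the question becomes whether $\|\boldsymbol{\mathcal{W}}_1\|_p^{1/2}$, which is regularly varying with index $2\alpha$, has tail probability $o(1/t)$ at level $b_0(t) \in RV_{1/\alpha_0}$ — true precisely when $\alpha_0 < 2\alpha$ (this also requires a somewhat delicate random modulus-of-continuity argument via the random threshold $\delta_p(\boldsymbol{\mathcal{W}}_1, t)$, and a separate choice $s \in (\alpha_0, 2\alpha)$ with a Jensen step for the cross terms $A_2, A_3$). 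You instead truncate the weight at level $Ab(t)$, dispatch the extreme regime by $\limsup_t t\mathbb{P}(\|\boldsymbol{\mathcal{W}}_1\|_\infty > Ab(t)) = A^{-\alpha}\nu\{\|\mathbf{x}\|_\infty > 1\} \rightarrow 0$, and on the moderate regime use exponential (Bennett/Bernstein) concentration, where the exponent $\asymp b_0(t)^2/b(t) \in RV_{2/\alpha_0 - 1/\alpha}$ diverges exactly when $\alpha_0 < 2\alpha$. Your version is arguably cleaner — the three-case partition is exhaustive without the paper's separate $A_2/A_3$ bookkeeping, and the source of the constraint $\alpha_0 < 2\alpha$ is isolated in a single regularly varying index computation — while the paper's version reuses the Poisson moment lemmas that it needs anyway for the Hill-estimator results. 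One small point to make explicit when writing this up: in your "moderate fluctuation, large weight" case you should note that $d_p(\boldsymbol{\mathcal{W}}_1/b_0(t), \mathbb{C}_0) > \eta/2$ together with $\|\boldsymbol{R}\|_p < \delta b_0(t)$ and $\delta < \eta/4$ forces $\boldsymbol{\mathcal{D}}_1/b_0(t) \notin \mathbb{C}_0$ as well, so that both arguments of $f$ lie in the domain on which the modulus of continuity $\Delta_f^p$ is defined; the $1$-Lipschitz property of $d_p(\cdot, \mathbb{C}_0)$ gives this immediately.
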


For part (b), note that we require $\alpha_0 \in [\alpha, 2\alpha)$. Seemingly, even if regular variation of $\mathbb{P}\left(\boldsymbol{\mathcal{W}}_1 \in \cdot \right)$ is confined to a subcone $\mathbb{C}_0$, $\mathbb{P}\left(\boldsymbol{\mathcal{D}}_1 \in \cdot \right)$ will produce lighter tailed behavior on  $\mathbb{R}_+^L\setminus \ \mathbb{C}_0$ due to the Poisson dispersion of $\boldsymbol{\mathcal{D}}_1$ off of $\mathbb{C}_0$. 
As evidenced in simple example to follow, the behavior off of $\mathbb{C}_0$ is not necessarily hidden regular variation. The example also shows that degree distances from $\mathbb{C}_0$, however, can be regularly varying with tail index $2\alpha$. Thus, in order for hidden regular variation of $\mathbb{P}\left(\boldsymbol{\mathcal{W}}_1 \in \cdot \right)$ to confer to $\mathbb{P}\left(\boldsymbol{\mathcal{D}}_1 \in \cdot \right)$, we require that the hidden tail behavior of  $\mathbb{P}\left(\boldsymbol{\mathcal{W}}_1 \in \cdot \right)$ dominates the Poisson dispersion. Otherwise, the hidden regular variation from $\mathbb{P}\left(\boldsymbol{\mathcal{W}}_1 \in \cdot \right)$ may be corrupted by degrees that spread from $\mathbb{C}_0$.

\begin{example}
\label{ex:example}
To explore the tail behavior of $\mathbb{P}\left(\boldsymbol{\mathcal{D}}_1 \in \cdot \right)$ on $\mathbb{R}_+^L\setminus \ \mathbb{C}_0$, we provide a simple example in $(\mathbb{R}_+^2, d_2)$. Suppose $W_{i1}$ is regularly varying with tail index $\alpha > 0$ and let $W_{i2} = W_{i1}$ for $i \in [n]$. It is easily seen that $\mathbf{W}_1$ is multivariate regularly varying and satisfies full asymptotic dependence \cite{das2017hidden}. That is, $\mathbb{P}\left(\boldsymbol{\mathcal{W}}_1 \in \cdot \right) \in \text{MRV}(\alpha, b(t), \nu, \mathbb{R}_+^L\setminus  \{\mathbf{0}\})$ and the limit measure $\nu$ only place mass on the ray $\mathbb{C}_0 = \{\mathbf{x} \in \mathbb{R}_+^2: x_1 = x_2 \}$. Here we take $b(t)$ to be the $1-1/t$ quantile of $W_{11}$, $t \geq 1$.  Let 
\begin{align*}
A_{ij1} \mid \mathbf{W}_{[n]} \overset{\text{ind}}{\sim}& \text{Poisson}(W_{i1}W_{j1}/T_1(n)) \qquad \text{for } 1 \leq i \leq j \leq n,\\
A_{ij2} \mid \mathbf{W}_{[n]} \overset{\text{ind}}{\sim}& \text{Poisson}(W_{i2}W_{j2}/T_2(n)) \qquad \text{for } 1 \leq i \leq j \leq n,
\end{align*}
so that $\mathbf{D}_1(n)$ and $\boldsymbol{\mathcal{D}}_1$ have the same conditional distribution given $\mathbf{W}_{[n]}$, for all $n \in \mathbb{N}$. In order to determine whether hidden regular variation exists off of $\mathbb{C}_0$, we evaluate the joint distribution of $d_2(\boldsymbol{\mathcal{D}}_1, \mathbb{C}_0) = |\mathcal{D}_{11} - \mathcal{D}_{12} |/\sqrt{2}$ and $\boldsymbol{\mathcal{D}}_1/d_2(\boldsymbol{\mathcal{D}}_1, \mathbb{C}_0)$. Let $b_0(t)$ be the $1-1/t$ quantile of $\sqrt{W_{11}}$. In Section \ref{sec:example}, we prove that for any $u, v \geq 0$
\begin{align}
\label{eq:example_final}
\begin{split}
t\mathbb{P}\left( \frac{|\mathcal{D}_{11} - \mathcal{D}_{12}|}{b_0(t)} \left(\frac{\sqrt{\pi}}{\Gamma(\alpha + \frac{1}{2})} \right)^{\frac{1}{2\alpha}} > u, \frac{\sqrt{2}\mathcal{D}_{11}/\sqrt{W_{11}}}{|\mathcal{D}_{11} - \mathcal{D}_{12}|} > v\right)& \\
\rightarrow u^{-2\alpha} \cdot \frac{2^{1 - \alpha}\sqrt{\pi}}{\Gamma(\alpha + \frac{1}{2})}&\int_0^{1/v} z^{2\alpha} \phi(z) dz,
\end{split}
\end{align}
where $\phi(\cdot)$ is the standard normal density function. Statement \eqref{eq:example_final} immediately gives that the marginal distribution of $d_2(\boldsymbol{\mathcal{D}}_1, \mathbb{C}_0)$ is regularly varying with tail index $2\alpha$. Thus, as a practical concern, one is unable to detect potential hidden regular variation conferred from $\mathbb{P}\left(\boldsymbol{\mathcal{W}}_1 \in \cdot \right)$ unless the associated tail index, $\alpha_0$, is smaller than $2\alpha$ (see Section \ref{sec:hrvsim} for more details). In addition, \eqref{eq:example_final} suggests that $\mathbb{P}\left(\boldsymbol{\mathcal{D}}_1 \in \cdot \right)$ is not hidden regularly varying off of $\mathbb{C}_0$. In essence, the lack of hidden regular variation is due to a mismatch of orders. Conditional on large $W_{11}$, the distance of $\boldsymbol{\mathcal{D}}_1$ from the diagonal is on the order of $\sqrt{W_{11}}$ while its component-wise means are both $W_{11}$. Hence, in order for the distribution of $\boldsymbol{\mathcal{D}}_1/d_2(\boldsymbol{\mathcal{D}}_1, \mathbb{C}_0)$ to stabilize and result in a limiting product measure, one must correct this mismatch by scaling the distances by $\sqrt{W_{11}}$. 
\end{example}

Nevertheless, Theorem \ref{thm:rv} provides a flexible framework for generating multilayer networks with multivariate regularly varying degree distributions. Furthermore, full control is exerted on the extremal dependence demonstrated by the degrees. This is in stark contrast with the currently available avenues of modeling degree dependence in network science. For example, in single-layer directed preferential attachment networks, the state of the art only allows extreme out/in-degrees to concentrate on lines through the origin \cite{wang2023random}. Compared to the direct simulation of networks with multivariate regularly varying degree distributions, generating weights in $\mathbb{R}_+^L$ with a given extremal dependence structure is a much simpler task \citep[see][for more on generating data with MRV]{das2015models}. Additionally, said dependence structure is immediately conferred to the degrees. Such properties, along with the flexibility allotted between layers in both edge counts and node affinity make the MIRG model a formidable choice for modeling multilayer networks with extremal degree behavior.

\section{Tail index estimation in the MIRG model}\label{sec:hill}

\subsection{Preliminaries on the Hill estimator}\label{sec:hillback}

In this section, we formally introduce the Hill estimator and provide definitions of associated tools commonly used to prove its consistency. Suppose $X_1, X_2, \dots, X_n$ is a sequence of iid regularly varying random variables with tail index $\alpha > 0$. In addition, let their order statistics be given by $X_{(1)}\geq X_{(2)} \geq \dots  \geq X_{(n)}$. Then, the Hill estimator of the inverse of the tail index is given by
\begin{align}
\label{eq:hillestdef}
H_{k, n} =  \frac{1}{k}\sum_{i = 1}^k \log \frac{X_{(i)}}{X_{(k + 1)}},
\end{align}
where $k \geq 1$ denotes the number of order statistics used in in the estimation of $\alpha^{-1}$ \cite{hill1975simple}. When $k = k_n \rightarrow \infty$ and $k_n/n \rightarrow \infty$ as $n \rightarrow \infty$, we have that 
\begin{align*}
H^{-1}_{k, n} \xrightarrow{p} \alpha.
\end{align*}
Note that the Hill estimator has been shown to be consistent in a wide variety of data contexts beyond the iid setting. Such instances include stationary time series and network datasets \cite{mason1982laws, resnick1995consistency, wang2019consistency}. For iid data, a minmum distance procedure has been proposed to choose the optimal number of order statistics used in the estimation of $\alpha$ \cite{clauset2009power, drees2020minimum}.  In the network setting, the Hill estimator has received considerable attention in applications due to the effect the tail index has on other properties of the degree distribution \cite{dorogovtsev2008critical}. \cite{voitalov2019scale} use the Hill estimator, among other tail index estimators, to verify that a significant portion of networks have power law degree distributions. In addition, \cite{voitalov2019scale} discuss the practical considerations when employing the Hill estimator to network data, including the difficulty faced in threshold selection in the presence of discrete power-law data. 

Often, consistency of the Hill estimator is derived through analysis of the tail empirical measure
\begin{align}
\label{eq:tailempdef}
\nu_n(\cdot) = \frac{1}{k_n} \sum_{i = 1}^n \epsilon_{X_i/b(n/k_n)}(\cdot),
\end{align}
where $b(t)$ satisfies $\mathbb{P}\left(X_i > b(t) \right) \sim 1/t$ as $t \rightarrow \infty$. Here, $\epsilon_x(A)$ places point mass on $A \subset (0, \infty]$ if $x \in A$ and is $0$ otherwise. Denote the set of non-negative Radon measures on $(0, \infty]$ by $M_+((0, \infty])$ and define the measure $\nu_\alpha$ by $\nu_\alpha(y, \infty] = y^{-\alpha}$ for $y > 0$. By seeking weak convergence of the random measure $\nu_n$ to $\nu_\alpha$ in $M_+((0, \infty])$, one may derive consistency of the Hill estimator through continuous mapping arguments. Such methods are reviewed in Section \ref{sec:hillest} in the MIRG model setting. See Theorem \ref{thm:weakconvD} for more details.

\subsection{Hill estimation in the MIRG model}

In this section we prove consistency of the Hill estimator in the MIRG model when the weight vectors are multivariate regularly varying with tail index $\alpha > 1$. Consistency of the Hill estimator in the single layer case is proven for the Norros-Reittu model for $\alpha > 0$ and Chung-Lu model for $\alpha > 2$ in \cite{bhattacharjee2022large}. Though one could consider applying the Hill estimator to each individual layer-wise degree sequence $\{D_{il}(n) \}_{i = 1}^n$, $l \in [L]$, in the MIRG model we consider a procedure that simultaneously employs all of $\{\mathbf{D}_i(n)\}_{i = 1}^n$ in the estimation, i.e. $\|\mathbf{D}_i(n)\|_p$. 
This is also a common strategy when exploring the dependence structure of multivariate extremes.
In addition, we extend consistency of the Hill estimator to the case of $\alpha \in (1, 2]$ for the Chung-Lu model, a case not covered in \cite{bhattacharjee2022large}, among other inhomogenous random graphs. The case where $\alpha \in (1, 2]$ is of distinct importance since many networks of interest have a tail index within that range \cite{albert2002statistical, albert1999diameter, barabasi2000scale, newman2003structure}.   

Thus, in order to meaningfully estimate $\alpha$ from the degrees of the MIRG model, we introduce our second assumption:
\begin{itemize}
\item[\textbf{(C2)}] The random vector $\boldsymbol{\mathcal{W}}_1$ is such that $\mathbb{P}\left(\boldsymbol{\mathcal{W}}_1 \in \cdot \right) \in \text{MRV}(\alpha, b(t), \nu, \mathbb{R}_+^L\setminus\{ \mathbf{0}\})$ for $\alpha > 1$.
\end{itemize}
If the weights $\mathbf{W}_{[n]}$ were accessible, condition (C2) implies the tail index $\alpha$ could be estimated by using Hill estimator computed on the weight radii $\{ \| \boldsymbol{\mathcal{W}}_i \|_p \}_{i \in [n]}$ for some $p \in \mathbb{N}$. That is, one would estimate $\alpha^{-1}$ via the statistic
\begin{align*}
H^\star_{k, n} = \frac{1}{k}\sum_{i = 1}^k \log \frac{R^\star_{(i)}(n)}{R^\star_{(k + 1)}(n)},
\end{align*}
where we define $R^\star_i(n) = \| \boldsymbol{\mathcal{W}}_i \|_p$ for $i \in [n]$. Since the weights are inaccessible, however, it behooves us to instead consider employing the estimator based on the degrees
\begin{align*}
H_{k, n} = \frac{1}{k}\sum_{i = 1}^k \log \frac{R_{(i)}(n)}{R_{(k + 1)}(n)},
\end{align*}
where $R_i(n) = \| \mathbf{D}_i(n) \|_p$. From, \eqref{eq:degreecondNR}, \eqref{eq:degreecondBer} and the approximation \eqref{eq:degapprox}, we may expect that $R^\star_{(i)}$ is close to $R_{(i)}$ for $i \in [n]$. This approximation is more viable when $i$ is small since we expect strong concentration of $R_{(i)}$ around $R^\star_{(i)}$ given the mixed-Poisson nature of the asymptotic degree distribution in Lemma \ref{lem:typdeg} and the fact that Poisson random variables with large rates experience strong concentration. Hence, we expect that $H_{k, n}$ should approximate $H^\star_{k, n}$, and therefore $H^{-1}_{k_n, n}$ should be a consistent estimator of $\alpha$ for a carefully specified sequence $k_n$.  

In order to make these notions rigorous, we introduce the tail empirical measure based on the sequence $\{ R^\star_i \}_{i = 1}^n$ 
\begin{align}
\label{eq:tailempW}
\nu^\star_n(\cdot) = \frac{1}{k_n} \sum_{i = 1}^n \epsilon_{R^\star_i(n)/b(n/k_n)}(\cdot),
\end{align} 
where we take $b(t)$ to be the $1 - 1/t$ quantile of the distribution of $R^\star_1$ for $t \geq 1$. Since $R^\star_1$ has a regularly varying distribution function with tail index $\alpha$, then $b(t)$ is reguarly varying with index $1/\alpha$ \cite[see Lemma 3.3 of][]{bhattacharjee2022large}. Under (C2), it is well known that 
\begin{align}
\label{eq:weakconvW}
\nu^\star_n \Rightarrow \nu_\alpha.
\end{align}
in $M_+((0, \infty])$ as $n \rightarrow \infty$, $k_n \rightarrow \infty$ and $k_n/n \rightarrow 0$ \cite[see Theorem 4.1 in][]{resnick2007heavy}. We seek an analogous weak convergence result for the tail empirical measure based on the degrees
\begin{align}
\label{eq:tailempD}
\nu_n(\cdot) = \frac{1}{k_n} \sum_{i = 1}^n \epsilon_{R_i(n)/b(n/k_n)}(\cdot),
\end{align} 
As in \cite{bhattacharjee2022large}, this weak convergence is obtained via an approximation by the tail empirical measure based on the independent weights, $\nu^\star_n$. Weak convergence of $\nu_n$ is presented in Theorem \ref{thm:weakconvD}. In order to prove Theorem \ref{thm:weakconvD}, we require our third and final condition.
\begin{itemize}
\item[\textbf{(C3)}] Suppose $n^{1/\alpha}/k_n = O(n^{-\kappa})$ for some $\kappa \in (0, (\alpha - 1)/\alpha)$.
\end{itemize}
Condition (C3) implies that in order to achieve the weak convergence of the tail empirical measure, and thus consistency of the Hill estimator in the MIRG model, $k_n$ must be chosen to grow at a slightly faster rate than $n^{1/\alpha}$. That is, as the tail becomes heavier, more order statistics must be employed to obtain consistency of the Hill estimator. The assumption (C3) emerges in the approximation \eqref{eq:degapprox} which is valid when the ratio of the weights to the sum of the weights are small. As $\alpha$ approaches $1$ from above, however, we may expect a portion of the large weights to be so large that their associated ratios are near $1$ and the approximation no longer holds. In order to counteract this effect, we must ensure that the Hill estimator is majorly compromised of smaller order statistics that satisfy the approximation \eqref{eq:degapprox}. 
\begin{theorem}
\label{thm:weakconvD}
Let $\{\mathbf{D}_i(n)\}_{i = 1}^n$ be a degree sequence from a MIRG satisfying (C1) and (C2). Suppose $k_n$ is an intermediate sequence satisfying $k_n \rightarrow \infty$ and $k_n/n \rightarrow 0$ as $n \rightarrow \infty$. Additionally assume that $k_n$ satisfies (C3). Then
\begin{align}
\label{eq:weakconvD}
\nu_n \Rightarrow \nu_\alpha,
\end{align}
in $M_+((0, \infty])$ as $n \rightarrow \infty$. 
\end{theorem}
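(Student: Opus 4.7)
The plan is to follow the coupling strategy of \cite{bhattacharjee2022large}: compare the degree-based tail empirical measure $\nu_n$ to its weight-based analogue $\nu_n^\star$, for which weak convergence is already classical. By standard results on norms of MRV vectors, (C2) implies that $R_i^\star(n)=\|\boldsymbol{\mathcal{W}}_i\|_p$ is an i.i.d.\ sequence with a univariate regularly varying tail of index $\alpha$ and the same scaling function $b(t)\in RV_{1/\alpha}$, so Theorem 4.1 of \cite{resnick2007heavy} yields $\nu_n^\star\Rightarrow\nu_\alpha$ under $k_n\to\infty$, $k_n/n\to0$. By Definition \ref{def:Mconv} and the remark after it, it therefore suffices to show $\int f\,d\nu_n - \int f\,d\nu_n^\star \xrightarrow{p} 0$ for every bounded uniformly continuous $f$ supported in $[\eta,\infty]$ for some $\eta>0$.

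Fix such an $f$ and $\delta>0$. A standard uniform-continuity argument bounds the difference by a term of order $\Delta_f^1(\delta)\cdot O_P(1)$ plus $2\|f\|_\infty N_n(\delta,\eta)/k_n$, where $N_n(\delta,\eta)$ counts indices $i\in[n]$ with $|R_i(n)-R_i^\star(n)|>\delta\,b(n/k_n)$ and $R_i(n)\vee R_i^\star(n)>(\eta/2)\,b(n/k_n)$. Letting $\delta\downarrow 0$ at the end reduces the proof to $N_n(\delta,\eta)=o_P(k_n)$ for each fixed $\delta,\eta>0$. Using the triangle inequality for $\|\cdot\|_p$ together with $\|\cdot\|_p\le\|\cdot\|_1$ for $p\ge 1$, I would write $|R_i(n)-R_i^\star(n)|\le\sum_{l=1}^L|D_{il}(n)-\mathcal{W}_{il}|$ and split $D_{il}(n)-\mathcal{W}_{il}=(D_{il}(n)-\mu_{il}(n))+(\mu_{il}(n)-\mathcal{W}_{il})$ with $\mu_{il}(n)=\sum_{j=1}^n g_l(W_{il}W_{jl}/T_l(n))$.

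The ``noise'' component $D_{il}(n)-\mu_{il}(n)$ is controlled by conditional Chebyshev: for $l\in\mathcal L_1$ the conditional law is Poisson and for $l\in\mathcal L_2$ it is Poisson-binomial, both with conditional variance at most $\mu_{il}(n)$, and summing the second-moment bounds produces a contribution of order $n/(k_n b(n/k_n)^2)$, which vanishes because $b(t)\in RV_{1/\alpha}$ and $k_n/n\to 0$. The ``bias'' component $\mu_{il}(n)-\mathcal{W}_{il}$ is controlled via \eqref{eq:glin}: on the high-probability event that every ratio $W_{il}W_{jl}/T_l(n)$ lies below the linearization threshold $\delta$, one has $|\mu_{il}(n)-\mathcal{W}_{il}|\le M W_{il}^{1+\nu}T_l(n)^{-(1+\nu)}\sum_j W_{jl}^{1+\nu}$; this is handled using the strong law for $T_l(n)/n$ together with a moment/truncation bound on $\sum_j W_{jl}^{1+\nu}$ (restricted, if $\alpha\le 1+\nu$, by peeling off the top few indices whose contribution is already captured in $N_n$).

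The main obstacle I expect is aligning the linearization range of (C1) with the size of the largest weights. For indices $i$ in the top-$k_n$ stratum the relevant weight $W_{il}$ is of order $b(n/k_n)$, so the self-loop/largest-$j$ contribution produces a ratio of order $b(n/k_n)^2/T_l(n)$; this tends to zero only because $\alpha>1$, and it does so at a rate that is just fast enough to keep the accumulated bias $o(k_n b(n/k_n))$ provided $k_n$ grows polynomially faster than $n^{1/\alpha}$. Condition (C3), $n^{1/\alpha}/k_n=O(n^{-\kappa})$, is precisely what furnishes this rate, forcing $b(n/k_n)^{1+\nu}/n^\nu=o(b(n/k_n))$ and closing the bias budget; the upper limit $\kappa<(\alpha-1)/\alpha$ is sharp, as beyond it $k_n$ would have to grow like $n$, violating the intermediate-sequence assumption. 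Finally, the single-edge layers are transferred from the Poisson analysis via the standard maximal coupling of Bernoulli$(p)$ and Poisson$(p)$, whose coincidence error of order $p^2$ aggregates to lower order under (C3), completing the passage $\nu_n\Rightarrow\nu_\alpha$.
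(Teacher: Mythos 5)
Your overall architecture — reduce $\nu_n\Rightarrow\nu_\alpha$ to $\nu_n^\star\Rightarrow\nu_\alpha$ plus a coupling/approximation step, split $D_{il}(n)-\mathcal{W}_{il}$ into a noise term $D_{il}(n)-\mu_{il}(n)$ and a bias term $\mu_{il}(n)-\mathcal{W}_{il}$, and invoke (C3) to keep the linearization of (C1) valid against the largest weights — matches the paper's strategy (Sections \ref{sec:degconc}--\ref{sec:tempproof}). However, there is a genuine gap in your control of the noise term. You bound it by conditional Chebyshev with conditional variance at most $\mu_{il}(n)$, obtaining a contribution of order $n/(k_n\,b(n/k_n)^2)$, and assert this vanishes. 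Since $b(n/k_n)\in RV_{1/\alpha}$ in $n/k_n$, this quantity is of order $(n/k_n)^{1-2/\alpha}$ up to slowly varying factors: it tends to zero only when $\alpha<2$, is merely bounded at $\alpha=2$, and diverges for $\alpha>2$. The problematic regime is precisely the event you cannot avoid, namely $\{\,|R_i(n)-R_i^\star(n)|>\delta b(n/k_n)\,\}$ intersected with $\{R_i^\star(n)\le (\eta/2) b(n/k_n)\}$ (small weight, large degree deviation); on the complementary event $\{R_i^\star(n)> (\eta/2) b(n/k_n)\}$ Karamata's theorem supplies the extra factor $(k_n/n)\,b(n/k_n)$ that rescues the second-moment bound, and that part of your argument is fine.

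The paper closes this gap in two distinct ways, and your write-up contains neither. For multigraph layers it uses $m$-th central moments of the Poisson distribution with $m>2\alpha$ (Lemmas \ref{lem:mbound} and \ref{lem:mnorm}), applied on the restricted event $\|\boldsymbol{\mathcal{W}}_1\|_p\le u b(n/k_n)$, which after Karamata yields a bound of order $b(n/k_n)^{-m/2}\to 0$; this is the content of \eqref{eq:Dapprox1} in Lemma \ref{lem:Dapprox}. For single-edge layers the analogous high central-moment bounds for the Poisson-binomial are not available, which is why the direct argument (Lemma \ref{lem:Dapprox2}, using second moments for $\alpha<2$ and a bespoke third-moment bound, Lemma \ref{lem:PB3}, for $\alpha=2$) is confined to $\alpha\in(1,2]$, and the case $\alpha>2$ with Bernoulli layers is instead handled by coupling those layers to Poisson layers (Lemmas \ref{lem:coupletemp} and \ref{lem:coupletempfinal}) and reducing to the all-Poisson case. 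You do mention the maximal coupling at the end, but only as a generic transfer device; as written, even after coupling you would still be applying Chebyshev to the Poisson layers, which fails for $\alpha\ge 2$. To repair the proof you must replace the second-moment bound on the small-weight event by a moment of order exceeding $2\alpha$ and organize the case split ($|\mathcal{L}_1|=L$ versus mixed layers, $\alpha\le 2$ versus $\alpha>2$) accordingly.
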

Theorem \ref{thm:weakconvD} is proven in Section \ref{sec:tempapprox}. In order to obtain consistency of the Hill estimator from Theorem \ref{thm:weakconvD}, an intermediate step is to use standard continuous mapping arguments reviewed in Section \ref{sec:hillest} to derive weak convergence of 
\begin{align}
\label{eq:tailemphatD}
\hat{\nu}_n(\cdot) = \frac{1}{k_n} \sum_{i = 1}^n \epsilon_{R_i(n)/R_{(k_n)}(n)}(\cdot),
\end{align}
where the unknown $b(n/k_n)$ in \eqref{eq:tailempD} is estimated by $R_{(k_n)}(n)$. That is, we obtain 
\begin{align}
\label{eq:weakconvDhat}
\hat{\nu}_n \Rightarrow \nu_\alpha,
\end{align}
in $M_+((0, \infty])$ as $n \rightarrow \infty$. Such steps are outlined in Section \ref{sec:hillest}. With \eqref{eq:weakconvDhat} in hand, we may now present the consistency of the Hill estimator for the MIRG model.
\begin{theorem}
\label{thm:hillc}
Let $\{\mathbf{D}_i(n)\}_{i = 1}^n$ be a degree sequence from a MIRG satisfying (C1) and (C2). Suppose $k_n$ is an intermediate sequence satisfying $k_n \rightarrow \infty$ and $k_n/n \rightarrow 0$ as $n \rightarrow \infty$. Additionally assume that $k_n$ satisfies (C3). Then as $n \rightarrow \infty$
\begin{align*}
H_{k_n, n} \xrightarrow{p} 1/\alpha,
\end{align*}
in $\mathbb{R}$. 
\end{theorem}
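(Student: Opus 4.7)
The strategy is to derive Theorem \ref{thm:hillc} from the weak convergence of the self-normalized tail empirical measure $\hat{\nu}_n \Rightarrow \nu_\alpha$ in $M_+((0,\infty])$, as stated in \eqref{eq:weakconvDhat}. According to the paper's outline, \eqref{eq:weakconvDhat} is obtained from Theorem \ref{thm:weakconvD} by standard continuous-mapping arguments for the quantile functional (valid because $\nu_\alpha$ has no atoms), together with the consistency $R_{(k_n)}(n)/b(n/k_n) \xrightarrow{p} 1$; I treat \eqref{eq:weakconvDhat} as given.

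The core calculation is to recast $H_{k_n, n}$ as an integral against $\hat{\nu}_n$. Applying Fubini to the identity $\log x = \int_0^\infty \mathbb{1}_{e^y < x}\, dy$ for $x > 1$ and using that $\hat{\nu}_n$ has mass $1/k_n$ at each $R_i(n)/R_{(k_n)}(n)$, a direct computation gives
\begin{align*}
H_{k_n, n} = \int_0^\infty \hat{\nu}_n(e^y,\infty]\, dy \;+\; \log\frac{R_{(k_n)}(n)}{R_{(k_n+1)}(n)}.
\end{align*}
The boundary term is $o_p(1)$: adjacent intermediate order statistics become asymptotically indistinguishable, which follows from applying Theorem \ref{thm:weakconvD} with the sequences $k_n$ and $k_n + 1$. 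If one may interchange the limit with the remaining integral, then $\hat{\nu}_n \Rightarrow \nu_\alpha$ combined with continuous mapping yields
\begin{align*}
\int_0^\infty \hat{\nu}_n(e^y,\infty]\, dy \xrightarrow{p} \int_0^\infty \nu_\alpha(e^y,\infty]\, dy = \int_0^\infty e^{-\alpha y}\, dy = 1/\alpha,
\end{align*}
and we conclude $H_{k_n, n} \xrightarrow{p} 1/\alpha$, as desired.

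The principal obstacle is justifying the exchange, since the functional $\mu \mapsto \int_0^\infty \mu(e^y,\infty]\,dy$ is not continuous on $M_+((0,\infty])$ due to the unbounded domain of integration. The standard remedy is truncation: for any fixed $T > 0$, continuous mapping already gives $\int_0^T \hat{\nu}_n(e^y,\infty]\,dy \xrightarrow{p} \int_0^T e^{-\alpha y}\,dy$, and it remains to establish the tail control
\begin{align*}
\lim_{T \to \infty} \limsup_{n \to \infty} \mathbb{P}\left( \int_T^\infty \hat{\nu}_n(e^y,\infty]\, dy > \varepsilon \right) = 0, \qquad \varepsilon > 0.
\end{align*}
To obtain this, I would invoke the degree-weight approximation developed in the proof of Theorem \ref{thm:weakconvD}, which shows that $\hat{\nu}_n$ is close to the analogous self-normalized measure built from the iid weight radii $R_i^\star(n) = \|\boldsymbol{\mathcal{W}}_i\|_p$; condition (C3) ensures this approximation is valid uniformly across the top order statistics. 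The corresponding tail bound for the iid weight radii is classical, following from Potter-type bounds on the regularly varying survival function of $R_1^\star(n)$. I expect this uniform tail control to be the main technical hurdle, since it requires the degree-to-weight approximation to hold not just for typical order statistics near the threshold $R_{(k_n)}(n)$ but uniformly across arbitrarily large ratios above it.
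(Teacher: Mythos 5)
Your proposal is correct and takes essentially the same route as the paper: represent $H_{k_n,n}$ as an integral of $\hat{\nu}_n$ (the paper uses $\int_1^\infty \hat{\nu}_n(y,\infty]\,dy/y$, the substitution $y=e^u$ away from yours, and silently drops the boundary term $\log(R_{(k_n)}/R_{(k_n+1)})$ that you correctly identify as $o_p(1)$), apply continuous mapping on a truncated range, and then control the tail of the integral. For that tail control the paper's execution of your sketched step is slightly more direct: on the event $\{|R_{(k_n)}(n)/b(n/k_n)-1|\le\eta\}$ it bounds $\hat{\nu}_n(y,\infty]\le\nu_n(y(1-\eta),\infty]$ and applies Markov's inequality, reducing everything to the marginal quantity $\tfrac{n}{k_n}\mathbb{P}\left(R_1(n)>yb(n/k_n)\right)$ integrated in $y$ — which is handled by the degree-to-weight comparison of Lemma \ref{lem:tailempapprox} plus regular variation of $\|\boldsymbol{\mathcal{W}}_1\|_p$ — so no uniformity across the upper order statistics is actually required, only uniformity in the threshold level $y$.
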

The proof of Theorem \ref{thm:hillc} is provided in Section \ref{sec:hillest}.

\section{Simulation evidence}\label{sec:sims}

In this section, we provide some simulations that elucidate the phenomena presented in Theorems \ref{thm:rv} and \ref{thm:hillc}. In particular, for Theorem \ref{thm:rv}, we present empirical evidence supporting the claim that hidden regular variation can be detected in the MIRG model. For Theorem \ref{thm:hillc}, we perform simulations that inspect the neccessity of assumption (C3) for achieving consistency of the Hill estimator in the MIRG model. 

\subsection{Hidden regular variation in the MIRG}\label{sec:hrvsim}

In this section, we present an experiment that indicates that hidden regular variation may be detected in the MIRG model. Following a similar procedure as in \cite{das2017hidden}, Example 2, for generating hidden regular variation, suppose $V_1 \sim \text{Pareto}(\alpha)$ and $V_2 \sim \text{Pareto}(\alpha_0)$ independently, where $\alpha = 1.1$ and $\alpha_0 = 1.3$. Suppose $\Theta_1 \sim \text{Beta}(5, 5, 0.4, 0.6)$ and $\Theta_2 \sim \text{Uniform}(0, 0.4)$ independently of each other, $V_1$ and $V_2$. Here, $Y \sim \text{Beta}(b_1, b_2, c_1, c_2)$ if $Y = (c_2 - c_1)X + c_1$ for $X \sim \text{Beta}(b_1, b_2)$ and $b_1, b_2 > 0$, $c_2 > c_1 \geq 0$. Then let
\begin{align}
\label{eq:simW}
\mathbf{W} = (W_1, W_2) = 
\begin{cases} 
(V_1\Theta_1, V_1(1-\Theta_1)) & \text{with probability } 1/2 \\
(V_2\Theta_2, V_2(1-\Theta_2)) & \text{with probability } 1/2
\end{cases}.
\end{align}
In \eqref{eq:simW}, we have employed the inverse $\ell_1$ polar coordinate transform to produce a random variable $\mathbf{W}$ that possesses MRV on $\mathbb{R}^2_+\setminus \{\mathbf{0}\}$ with tail index $\alpha = 1.1$. In particular, the limit measure concentrates mass on the cone
\begin{align*}
\mathbb{C}_0 = \left\lbrace \mathbf{x} \in \mathbb{R}_+^2 : \frac{2}{3}x_1 \leq x_2 \leq \frac{3}{2}x_1 \right\rbrace. 
\end{align*} 
We may seek further regular variation off of $\mathbb{C}_0$. For simplicity, \eqref{eq:simW} was designed so that the limit measure $\nu_0$ only places mass above $\mathbb{C}_0$, though hidden regular variation below $\mathbb{C}_0$ may also be considered. Using the Euclidean metric, 
\begin{align*}
d_2(\mathbf{x}, \mathbb{C}_0) = \frac{x_2 - 1.5x_1}{\sqrt{1 + 1.5^2}}, \qquad \text{for } x_2 > 1.5x_1,
\end{align*}
and $\aleph_{\mathbb{C}_0}$ is given by
\begin{align*}
&\aleph_{>\mathbb{C}_0} \cup  \aleph_{<\mathbb{C}_0} \\
&= \left\lbrace (v, 1.5v + \sqrt{1 + 1.5^2}): v \geq 0 \right\rbrace \cup \left\lbrace (v, (2/3) v - \sqrt{1 + (2/3)^2}): v \geq 1.5\sqrt{1 + (2/3)^2} \right\rbrace.
\end{align*}
Hence, using the GPOLAR transformation, we may rewrite \eqref{eq:HRV}
\begin{align}
\label{eq:WHRV}
&t\mathbb{P}\left( \left( \frac{W_2 - 1.5W_1}{\sqrt{1 + 1.5^2} b_0(t)},  \frac{\sqrt{1 + 1.5^2} \mathbf{W}}{ W_2 - 1.5W_1}\right) \in \cdot  \right) \rightarrow (\nu_{\alpha_0} \times S_0)(\cdot) \qquad \text{in }  \mathbb{M}((0, \infty) \times \aleph_{>\mathbb{C}_0}).
\end{align}
Since $S_0$ places no mass on $\aleph_{<\mathbb{C}_0}$, we omit the trivial convergence in $\mathbb{M}((0, \infty) \times \aleph_{<\mathbb{C}_0})$. Following \cite{das2017hidden}, when analyzing the hidden regular variation in \eqref{eq:WHRV}, it suffices to consider the pair $(W_2 - 1.5W_1, W_2/W_1)$ since for $y > 1.5$
\begin{align*}
t\mathbb{P}&\left(\frac{W_2 - 1.5W_1}{b_0(t)} > x, \frac{W_2}{W_1} \leq y \right)\\
&\rightarrow \left(1 + 1.5^2\right)^{-\alpha_0/2}x^{-\alpha_0} S_0\left\lbrace (u_1, 1.5u_1 + \sqrt{1 + 1.5^2}); u_1 \geq \frac{\sqrt{1 + 1.5^2}}{y  - 1.5} \right\rbrace.
\end{align*}
We consider $n = 2{,}000{,}000$ replicates of $\mathbf{W}$, $\mathbf{W}_{[n]}$. Generate a $L = 2$ layer MIRG with adjacency cube $\mathbf{A}(n)$ as follows. Let
\begin{align}
\label{eq:simMIRG}
\begin{split}
A_{ij1} &\mid \mathbf{W}_{[n]} \overset{\text{ind}}{\sim} \text{Poisson}\left( W_{i1}W_{j1}/T_1(n) \right), \qquad \text{for } 1 \leq i \leq j \leq n, \\
A_{ij2} &\mid \mathbf{W}_{[n]} \overset{\text{ind}}{\sim} \text{Bernoulli}\left( 1 - \exp\left\lbrace -W_{i2}W_{j2}/T_2(n) \right\rbrace \right), \qquad \text{for } 1 \leq i \leq j \leq n.
\end{split}
\end{align}
Borrowing nomenclature from Section \ref{sec:model}, we have that $g_1(x) = x$ and $g_2(x) = 1 - e^{-x}$ for $x > 0$. That is, the first layer of the MIRG consists on a Norros-Reittu random graph and the second layer consists of what is commonly known as the Poissonian random graph \cite{van2008universality}. It is clear that $g_1$ and $g_2$ satisfy assumption (C1). According to Theorem \ref{thm:rv}, statement \eqref{eq:WHRV} should approximately hold with $\mathbf{D}_1(n)$ in place of $\mathbf{W}$, assuming that $n$ is large enough so that $\mathbb{P}(\mathbf{D}_1(n) \in \cdot )$ is close to $\mathbb{P}(\boldsymbol{\mathcal{D}}_1 \in \cdot )$. 

In order to detect hidden regular variation of $\mathbf{D}_1(n)$, we employ the Hillish estimator \cite{das2011detecting}. Theorem \ref{thm:rv}, along with \eqref{eq:WHRV} suggests that the pairs $(\xi_i, \eta_i) \equiv (D_{i2}(n) - 1.5D_{i1}(n), D_{i2}(n)/D_{i1}(n))$, $i \in [n]$, satisfy a conditional extreme value (CEV) model. The Hillish estimator aims to detect CEV models by defining $\xi_{(1)} \geq \dots \geq \xi_{(n)}$ and letting $\eta^\star_i$ be the $\eta$-variable corresponding to $\xi_{(i)}$, also known as the concomitant of $\eta^\star_i$. If $N_i^k$ is the rank of $\eta^\star_i$ among $\eta^\star_1, \dots, \eta^\star_k$, the Hillish estimator for $k \in [n]$ is defined as
\begin{align}
\label{eq:Hillish}
\text{Hilllish}_{k, n}((\xi, \eta)_{[n]})  := \frac{1}{k} \sum_{i = 1}^k \log\left(\frac{k}{i}\right)\log\left(\frac{k}{N_i^k}\right). 
\end{align}
It is shown in \cite{das2011detecting} that if $(\xi, \eta)_{[n]}$ are independent pairs satisfying a CEV model, then $\text{Hilllish}_{k_n, n}((\xi, \eta)_{[n]})$ has an in probability limit as $n \rightarrow \infty$ and $k_n/n \rightarrow 0$. Additionally, the limiting probability measure associated with the CEV model is a product measure if and only if
\begin{align}
\label{eq:Hillishprod}
\text{Hilllish}_{k_n, n}((\xi, \eta)_{[n]}) \xrightarrow{p} 1 \text{ and }\text{Hilllish}_{k_n, n}((\xi, -\eta)_{[n]}) \xrightarrow{p} 1,
\end{align}
as $n \rightarrow \infty, k_n/n \rightarrow 0$. 
Although the theoretical properties of the Hillish estimator have been analyzed only under the iid assumption, \cite{das2017hidden} applies it to network data to detect the existence of HRV. Hence, we follow a similar strategy here.

Since the limit in \eqref{eq:WHRV} is a product measure, we test whether $\mathbf{D}_1(n)$ has hidden regular variation by simulating $1{,}000$ replicates of the MIRG with $n = 2{,}000{,}000$ nodes and plotting the Hillish estimator for $k \in [4{,}000]$ over the $1{,}000$ replicates. Although the Hillish estimator is designed with independent data in mind, the approximation \eqref{eq:degapprox} suggests that the dependence between degrees may not overly inhibit its ability to detect HRV. The left-hand panel of Figure \ref{fig:hillish} presents pointwise $(10, 90)$-th (light pink) and $(25, 75)$-th (purple) quantiles for the Hillish estimators across the $1{,}000$ iterations. The black line represents the pointwise Hillish means across the $1{,}000$ iterations. The plots of both Hillish estimators suggest that $\mathbf{D}_1(n)$ does indeed exhibit hidden regular variation; the Hillish estimators stabilize around $1$ near $k_n = 500$. As $k_n$ becomes large however, the Hillish estimators drift away from $1$, indicating that that $k_n$ has grown too large in comparison to $n$ in order to satisfy the convergence in \eqref{eq:Hillishprod}. 

\begin{figure}
\includegraphics[width=\textwidth]{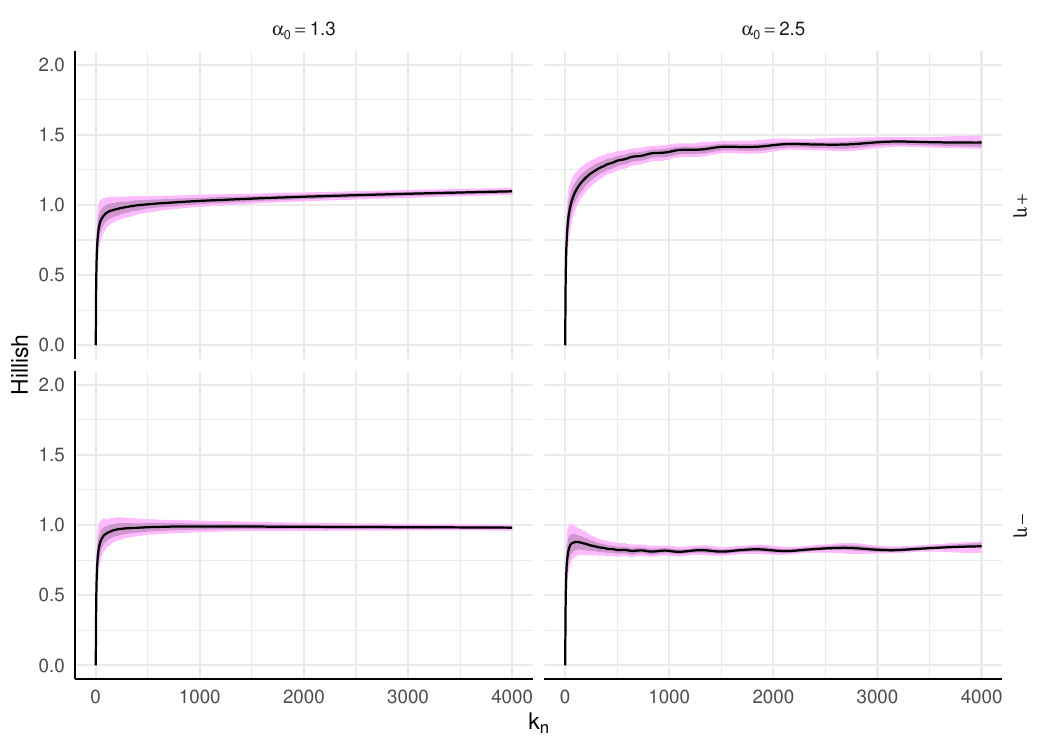}
\caption{Pointwise means (black), $(10, 90)$-th (light pink) and $(25, 75)$-th (purple) quantiles for the Hillish estimators based on $(\xi, \eta)_{[n]}$ and $(\xi, -\eta)_{[n]}$ for $n = 2{,}000{,}000$ and $k \in [4,000]$ over $1{,000}$ replicates of the MIRG model with $\alpha = 1.1$ and $\alpha_0 \in \{ 1.3, 2.5 \}$.}
\label{fig:hillish}
\end{figure}

In order to guarantee that hidden regular variation from $\boldsymbol{\mathcal{W}}_1$ confers to $\boldsymbol{\mathcal{D}}_1$, Theorem \ref{thm:rv} requires that $\alpha_0 \in [\alpha, 2 \alpha)$. To further assess the necessity of this requirement, we simulate $1{,}000$ replicates of the MIRG \eqref{eq:simMIRG} with $n = 2{,}000{,}000$ except now $\alpha_0 = 2.5$. In this setting, $\alpha_0 \notin [\alpha, 2\alpha)$ and we do not expect that hidden regular variation from the weights is detectable from the degrees as discussed in Section \ref{sec:MRVMIRG}. For each replicate, we compute the Hillish estimator over $k \in [4{,}000]$. The right-hand panel of Figure \ref{fig:hillish} plots the $(10, 90)$-th  and $(25, 75)$-th quantiles as well as pointwise means for the Hillish estimators across the $1{,}000$ iterations. The Hillish plots indicate that the observed $\mathbf{D}_1(n)$ do not possess hidden regular variation; neither set of Hillish estimators concentrate around $1$ for any value of $k \in [4{,}000]$.

\subsection{Consistency of the Hill estimator in the MIRG}

In this section we present a simulation that inspects the necessity of condition (C3) to obtain consistency of the Hill estimator in the MIRG model for $\alpha > 1$. Let 
\begin{align*}
\mathbf{W} = (W_{1}, W_{2}) = (V\Theta, V(1 - \Theta)),
\end{align*}
where $V \sim \text{Pareto}(\alpha)$ and $\Theta \sim \text{Beta}(5, 5, 0.4, 0.6)$ independently for $\alpha > 0$. By construction, $\mathbf{W}$ exhibits multivariate regular variation with tail index $\alpha$. Consider $n = 1{,}000{,}000$ independent replicates of $\mathbf{W}$, $\mathbf{W}_{[n]}$, and define a $L = 2$ layer MIRG with adjacency cube $\mathbf{A}(n)$ as follows. Let
\begin{align*}
A_{ij1} &\mid \mathbf{W}_{[n]} \overset{\text{ind}}{\sim} \text{Poisson}\left( W_{i1}W_{j1}/T_1(n) \wedge 1  \right), \qquad \text{for } 1 \leq i \leq j \leq n, \\
A_{ij2} &\mid \mathbf{W}_{[n]} \overset{\text{ind}}{\sim} \text{Bernoulli}\left( W_{i2}W_{j2}/T_2(n)/(1 + W_{i2}W_{j2}/T_2(n))  \right), \qquad \text{for } 1 \leq i \leq j \leq n.
\end{align*}
Here, $g_1(x) = x \wedge 1$ and $g_2(x) = x/(1+x)$ for $x\geq 0$, both of which satisfy assumption (C1). The first layer of the proposed MIRG is a multigraph that limits the extent to which large weights encourage a higher number of links. The second layer of the MIRG is a version of the generalized random graph that behaves similarly to the first layer, except in a more continuous fashion. In order to estimate $\alpha$ from the degrees of the MIRG model, we naturally employ the inverse of the Hill estimator
\begin{align*}
H_{k_n, n} = \frac{1}{k_n} \sum_{i = 1}^n \log\left(R_{(i)}(n)/R_{(k_n + 1)}(n) \right),
\end{align*}
where we set $R_i$ to be the $\ell_1$ norm of the degree vector $(D_{i1}(n), D_{i2}(n))$, or the total degree across the layers. For each $\alpha \in \{1, 1.2, 1.4, 1.6, 1.8, 2\}$, we generate $1{,}000$ networks and for each simulated network, we compute $H^{-1}_{k_n, n}$ across $k_n \in \{100, 200, 500, 10^3, 5 \times 10^3, 10^4, 10^5 \}$. Table \ref{tab:hillsims} presents the bias and MSE (in parentheses) for each $\alpha$ and $k_n$ combination across the $1{,}000$ networks. Bold values indicate that the choice of $k_n$ minimizes the empirical absolute bias or MSE for the given value of $\alpha$. Note that our theory does not cover the case $\alpha = 1$.  Table \ref{tab:hillsims} indicates that consistency of the Hill estimator actually may not hold for $\alpha = 1$ since the empirical biases are all over $0.1$, regardless of the value of $k_n$. Note that as $\alpha$ increases, however, the value of $k_n$ that minimizes the empirical asbolute bias tends to decrease. This suggests than an assumption that controls the rate at which $k_n$ diminishes as $\alpha$ increases may indeed be necessary. On the other hand, the value of $k_n$ that minimizes the empirical MSE tends to stay relatively stable at $k_n = 5{,}000$.

\begin{table}
\centering
\begin{tabular}{| c | c c c c c c|}
\hline
$k_n$ & \multicolumn{6}{c|}{$\alpha$} \\
 & $1.0$ & $1.2$ & $1.4$ & $1.6$ & $1.8$ & $2.0$ \\
\hline
$100$ & 0.157 (0.039) & 0.077 (0.021) & 0.036 (0.022) & 0.021 (0.028) & 0.029 (0.039) & 0.028 (0.043) \\
$200$ & 0.141 (0.027) & 0.062 (0.011) & 0.026 (0.011) & 0.014 (0.013) & \textbf{0.021} (0.019) & \textbf{0.026} (0.020) \\
$500$ & 0.127 (0.018) & 0.051 (0.005) & 0.020 (0.004)  & \textbf{0.013} (0.005) & 0.022 (0.008) & 0.034 (0.010)  \\
$10^3$ & 0.117 (0.015) & 0.044 (0.003) & \textbf{0.018} (0.002) & 0.015 (0.003) & 0.027 (\textbf{0.005}) & 0.045 (0.007) \\
$5 \times 10^3$ & 0.103 (\textbf{0.012}) & 0.036 (\textbf{0.002}) & 0.020 (\textbf{0.001})  & 0.023 (\textbf{0.002}) & 0.078 (0.008) & 0.054 (\textbf{0.004}) \\
$10^4$ & \textbf{0.102} (0.014) & 0.036 (0.002) & 0.026 (0.002) & 0.072 (0.008) & 0.131 (0.018) & 0.134 (0.019) \\
$10^5$ & 0.117 (0.065) & \textbf{0.002} (0.007) & 0.164 (0.027) & 0.184 (0.035) & -0.140 (0.020) & 0.242 (0.059)  \\
\hline
\end{tabular}
\caption{Bias and MSEs (in parentheses) for $H^{-1}_{k_n, n}$ over $1{,}000$ iterations of a MIRG model with $n = 1{,}000{,}000$ and $\alpha \in \{1, 1.2, 1.4, 1.6, 1.8, 2 \}$. For each network, $H^{-1}_{k_n, n}$ is computed for $k_n \in \{100, 200, 500, 10^3, 5 \times 10^3, 10^4, 10^5 \}$. }
\label{tab:hillsims}
\end{table}

\section{Conclusion}\label{sec:conc}

In this paper we have introduced a multilayer inhomogeneous random graph (MIRG) model and studied its theoretical properties. The model is inherently flexible, allowing for layer-wise heterogeneity of edge counts and node attractiveness as well as complete specification of the asymptotic extremal dependence structure. When the latent weights are multivariate regularly varying, we also confirm that tail index estimation is feasible in such a model; with a restriction on the number of order statistics used in estimation, the Hill estimator based on the norm of the layer-wise degrees is consistent. Simulations indicate that, for practical sample sizes commonly encountered in network science, the asymptotic degree distribution accurately describes the behavior of extreme degrees in simulated networks. Additionally, the simulations indicate that a restriction on the number of order statistics is indeed necessary to achieve consistency of the Hill estimator in the MIRG model.

The proposed model and results produce additional research questions to consider in future work. First, we may consider the asymptotic behavior of the degree distribution when $\alpha  \in (0, 1]$ in the MIRG model. Here, the ratio of the maximum weight and sum of the weights converge in distribution, which should produce interesting behavior. We also conjecture that the MIRG model can be imbued with more complex weight distributions, such as independent and non-identically distributed data, and still produce scale-free behavior. From a statistical perspective, methods developed in \cite{einmahl2022extreme} and \cite{einmahl2023extreme} suggest that the Hill estimator would still be consistent in this case. Additionally, given a realization from the MIRG model, one would like to make inference on the extremal dependence structure. This includes estimating the support of the angular measure and distinguishing between different types of dependence. Since the tail empirical measure of the degree radii may be approximated by the tail empirical measure of independent data, methods developed in \cite{wang20232rv+} may apply. Such methods rely of asymptotic normality of the Hill estimator in the MIRG model, an important result to evaluate in its own right.

\section{Proofs}\label{sec:proofs}

\subsection{Weight approximation lemmas}\label{sec:weightlem}

In this section we present lemmas that help us establish the approximation \eqref{eq:degapprox}. The formal lemma is given in Lemma \ref{lem:gconv}, though we first present some helpful building block lemmas that will be used throughout the paper. Fix $l \in [L]$. From condition (C1), we expect $g_l(W_{1l}W_{jl}/T_l(n)) \approx c_l W_{1l}W_{jl}/T_l(n)$ to hold for all $j \in [n]$ when $W_{1l}W_{(1)l}/T_l(n)$ is small. Lemma \ref{lem:Wsmall} establishes the rate at which $W_{1l}W_{(1)l}(n)/T_l(n)$ tends to zero under condition (C3). Naturally, this requires that $W_{(1)l}(n)/T_l(n)$ tends to zero as $n \rightarrow \infty$. It is well known that,
\begin{align}
\label{eq:maxsum}
W_{(1)l}(n)/T_l(n) \xrightarrow{p} 0, \qquad \text{as } n \rightarrow \infty
\end{align}
if and only if $\mathbb{E}[W_{1l}] < \infty$ \cite{o1980limit}. This necessitates the requirement that $\alpha > 1$ in condition (C2). A useful result that helps characterize the rate of convergence of \eqref{eq:maxsum} is given in Theorem 9.1 of \cite{downey2007ratio}, which states that if the distribution of $W_{1l}$ is regularly varying with tail index $\alpha > 1$, then
\begin{align}
\label{eq:downey}
\mathbb{E}\left[W_{(1)l}(n)/T_l(n) \right] \sim \frac{\mathbb{E}[W_{(1)l}(n)]}{n E[W_{1l}]}, \qquad \text{as } n \rightarrow \infty.
\end{align}
We now present Lemma \ref{lem:Wsmall}. 
\begin{lemma}
\label{lem:Wsmall}
Let $\mathbf{W}_{[n]}$ be such that (C2) holds. Suppose $k_n$ is a sequence satisfying $k_n \rightarrow \infty$ and $k_n/n \rightarrow 0$ as $n \rightarrow \infty$. Further assume that (C3) holds. Fix $\epsilon > 0$ and suppose $\beta \in (0, \alpha)$. Then for each $l \in [L]$
\begin{align*}
&\frac{n}{k_n} \mathbb{P}\left(W^\beta_{1l}\frac{W_{(1)l}(n)}{T_l(n)} > \epsilon \right) \rightarrow 0,
\end{align*}
as $n \rightarrow \infty$.
\end{lemma}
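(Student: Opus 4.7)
The plan is to (i) replace the denominator $T_l(n)$ by the constant $n\mu_l$ on a high-probability event, where $\mu_l = \mathbb{E}[W_{1l}]$ is finite since (C2) with $\alpha > 1$ forces each marginal to be integrable; (ii) use the decomposition $W_{(1)l}(n) = \max(W_{1l}, M_n) \leq W_{1l} + M_n$, where $M_n = \max_{2 \leq i \leq n} W_{il}$ is independent of $W_{1l}$, to split the remaining probability into a piece depending only on $W_{1l}$ and a piece handled by independence; and (iii) verify that each resulting contribution, multiplied by $n/k_n = O(n^{1-1/\alpha-\kappa})$ (from (C3)), tends to zero.

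Concretely, with $c = \epsilon\mu_l/2$, I would establish the bound
\begin{align*}
\mathbb{P}\Bigl( W_{1l}^\beta \tfrac{W_{(1)l}(n)}{T_l(n)} > \epsilon \Bigr)
&\leq \mathbb{P}\bigl( |T_l(n) - n\mu_l| > n\mu_l/2 \bigr) \\
&\quad + \mathbb{P}\bigl( W_{1l}^{1+\beta} > cn \bigr) + \mathbb{P}\bigl( W_{1l}^\beta M_n > cn \bigr).
\end{align*}
For the first summand, the von Bahr--Esseen inequality with $p \in (1, \alpha \wedge 2)$ gives $\mathbb{P}(|T_l(n) - n\mu_l| > n\mu_l/2) = O(n^{1-p})$, and the identity $\alpha + 1/\alpha - 2 = (\alpha-1)^2/\alpha > 0$ leaves enough room to choose $p$ so that $(n/k_n)\cdot n^{1-p} = o(1)$. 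For the second summand, marginal regular variation of $W_{1l}$ gives $\mathbb{P}(W_{1l}^{1+\beta} > cn) = O(n^{-\alpha/(1+\beta)})$ (up to slowly varying factors), and the algebraic bound $\alpha/(1+\beta) \geq \alpha/(1+\alpha) > (\alpha-1)/\alpha$ for $\beta \leq \alpha$ makes this product vanish regardless of $\kappa$.

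The main obstacle is the third summand $\mathbb{P}(W_{1l}^\beta M_n > cn)$. Conditioning on $W_{1l} = w$, the union bound $\mathbb{P}(M_n > u) \leq (n-1)\mathbb{P}(W_{1l} > u)$ combined with regular variation yields the conditional estimate $\min\{1, C n^{1-\alpha} w^{\alpha\beta}\}$ (modulo slow variation). Integrating against the law of $W_{1l}$ and splitting at the saturation threshold $x_n \asymp n^{(\alpha-1)/(\alpha\beta)}$, the tail piece $\mathbb{P}(W_{1l} > x_n)$ contributes $O(n^{-(\alpha-1)/\beta})$. For the truncated-moment piece, when $\beta > 1$ we have $\alpha\beta > \alpha$ so $\mathbb{E}[W_{1l}^{\alpha\beta}] = \infty$, and Karamata's theorem must be invoked: $\mathbb{E}[W_{1l}^{\alpha\beta} \mathbf{1}\{W_{1l} \leq x_n\}] = O(x_n^{\alpha\beta - \alpha})$, which upon substitution gives the same order $O(n^{-(\alpha-1)/\beta})$. (When $\beta \leq 1$, $\mathbb{E}[W_{1l}^{\alpha\beta}] < \infty$ and the cruder bound $O(n^{1-\alpha})$ suffices a fortiori.)

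Multiplying the bound $O(n^{-(\alpha-1)/\beta})$ by $n/k_n$ produces the final exponent $(\alpha-1)/\alpha - \kappa - (\alpha-1)/\beta$. The strict inequality $\beta < \alpha$ gives $(\alpha-1)/\beta > (\alpha-1)/\alpha$, so this exponent is bounded above by $-\kappa < 0$, completing the argument. The subtlety is that the algebraic slack from $\beta < \alpha$ is exactly what is needed to absorb the $n/k_n$ factor supplied by (C3): as $\beta \uparrow \alpha$ the slack vanishes and it is precisely the strict positivity of $\kappa$ that keeps the exponent negative.
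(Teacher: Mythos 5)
Your argument is correct, but it takes a genuinely different route from the paper's. The paper never normalizes the denominator: it first discards the event $\{W_{1l} > \max_{i\neq 1} W_{il}\}$ (probability $1/n$, so $\tfrac{n}{k_n}\cdot\tfrac1n \to 0$), reduces to the quantity $W_{1l}^\beta \max_{i\neq 1}W_{il}/\sum_{i\neq 1}W_{il}$ in which $W_{1l}$ is independent of the ratio, and then applies a first-moment Markov bound together with Theorem 9.1 of Downey ($\mathbb{E}[\max/\mathrm{sum}] \sim \mathbb{E}[\max]/(n\mathbb{E}[W_{1l}])$) and $\mathbb{E}[\max] \sim \Gamma(1-1/\alpha)b^\star(n)$, after which (C3) gives $b^\star(n)/k_n \to 0$. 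You instead concentrate $T_l(n)$ around $n\mu_l$ via von Bahr--Esseen, decouple via $W_{(1)l}(n) \le W_{1l} + M_n$, and estimate each tail directly with a union bound and Karamata. Your route is more self-contained (no ratio-of-expectations theorem needed) and yields a sharper polynomial rate $n^{-\kappa - (\alpha-1)(1/\beta - 1/\alpha)}$ for the dominant term, versus the paper's $O(n^{-\kappa})$ from the cruder first-moment Markov step; the price is the extra case analysis (the choice of $p$ in von Bahr--Esseen and the $\beta\lessgtr 1$ split). Two small points: at $\beta=1$ your parenthetical claim $\mathbb{E}[W_{1l}^{\alpha\beta}]<\infty$ can fail (this is $\mathbb{E}[W_{1l}^{\alpha}]$), but your Karamata branch already covers that case, so nothing breaks; and both you and the paper implicitly use that the marginal $W_{1l}$ is regularly varying with index $\alpha$, which is slightly more than (C2) literally states, so you are on equal footing there.
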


\begin{proof}
Note that
\begin{align*}
W^\beta_{1l}\frac{W_{(1)l}(n)}{T_l(n)} \leq& \frac{W^{1 + \beta}_{1l}}{T_l(n)} + \frac{W^\beta_{1l}\underset{i \in [n]\setminus \{1\}}{\max} W_{il}}{T_l(n)} \\
\leq& \frac{W^{1 + \beta}_{1l}}{\sum_{i \in [n]\setminus \{1\}} W_{il}} + \frac{W^\beta_{1l}\underset{i \in [n]\setminus \{1\}}{\max} W_{il}}{\sum_{i \in [n]\setminus \{1\}} W_{il}}.
\end{align*}
In addition it is easily seen that since the distribution of $W_{1l}$ is continuous
\begin{align}
\label{eq:max1}
\frac{n}{k_n}\mathbb{P}\left(W_{1l} > \underset{i \in [n]\setminus \{1\}}{\max} W_{il} \right) = \frac{n}{k_n} \frac{1}{n} = \frac{1}{k_n} \rightarrow 0, \qquad \text{as } n \rightarrow \infty.
\end{align}
Hence when $W_{1l} \leq \underset{i \in [n]\setminus \{1\}}{\max} W_{il}$, we may bound
\begin{align}
\label{eq:maxW}
W^\beta_{1l}\frac{W_{(1)l}(n)}{T_l(n)} \leq& \frac{2 W^\beta_{1l}\underset{i \in [n]\setminus \{1\}}{\max} W_{il}}{\sum_{i \in [n]\setminus \{1\}} W_{il}}.
\end{align}
Applying \eqref{eq:max1} and \eqref{eq:maxW} thus gives
\begin{align*}
\frac{n}{k_n}\mathbb{P}\left( W^\beta_{1l}\frac{W_{(1)l}(n)}{T_l(n)} > \epsilon \right) \leq& \frac{n}{k_n}\mathbb{P}\left(W_{1l} > \underset{i \in [n]\setminus \{1\}}{\max} W_{il} \right) + \frac{n}{k_n}\mathbb{P}\left(\frac{2W^\beta_{1l}\underset{i \in [n]\setminus \{1\}}{\max} W_{il}}{\sum_{i \in [n]\setminus \{1\}} W_{il}} > \epsilon \right).
\end{align*}
The first term on the right-hand side of the inequality converges to $0$ by \eqref{eq:max1}. The convergence of the second term to $0$ is given Lemma \ref{lem:Ox2}.
\end{proof}

\begin{lemma}
\label{lem:Ox2}
Let $\mathbf{W}_{[n]}$ be such that (C2) holds. Suppose $k_n$ is a sequence satisfying $k_n \rightarrow \infty$ and $k_n/n \rightarrow 0$ as $n \rightarrow \infty$. Further assume that (C3) holds. Fix $\epsilon > 0$ and suppose $\beta \in (0, \alpha)$. Then for each $l \in [L]$
\begin{align}
\label{eq:Ox2}
&\frac{n}{k_n} \mathbb{P}\left(W^\beta_{1l} \frac{\underset{i \in [n]\setminus \{1\}}{\max} W_{il}}{\sum_{i \in [n]\setminus \{1\}} W_{il}} > \epsilon\right) \rightarrow 0,
\end{align}
as $n \rightarrow \infty$.
\end{lemma}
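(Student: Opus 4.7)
The strategy is to bound the probability via Markov's inequality after exploiting the independence of $W_{1l}$ from the pair $(M_n, S_n) := \bigl(\max_{i \in [n]\setminus\{1\}} W_{il},\ \sum_{i \in [n]\setminus\{1\}} W_{il}\bigr)$. Since $(M_n, S_n)$ is a measurable function of $\{W_{il}\}_{i=2}^n$ alone and the vectors $\mathbf{W}_i$ are i.i.d.\ across $i$, the random variable $W_{1l}$ is independent of $(M_n, S_n)$. Markov's inequality together with this independence gives
$$\mathbb{P}\bigl(W_{1l}^\beta M_n/S_n > \epsilon\bigr) \leq \epsilon^{-1} \mathbb{E}[W_{1l}^\beta]\, \mathbb{E}[M_n/S_n].$$

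I will next control each factor on the right. By (C2), the vector $\boldsymbol{\mathcal{W}}_1$ is MRV with tail index $\alpha > 1$, so $\mathbb{P}(W_{1l} > x)$ is dominated by a regularly varying function of index $-\alpha$ (note $\|\boldsymbol{\mathcal{W}}_1\|_p \geq \mathcal{W}_{1l}$ implies the marginal tail is at most as heavy as $\alpha$). Karamata's theorem for moments then yields $\mathbb{E}[W_{1l}^\beta] < \infty$ whenever $\beta < \alpha$. To bound $\mathbb{E}[M_n/S_n]$, I will invoke the Downey ratio estimate \eqref{eq:downey}, applied to the i.i.d.\ sequence $W_{2l}, \dots, W_{nl}$: since $\alpha > 1$ gives $\mu_l := \mathbb{E}[W_{1l}] < \infty$, this yields $\mathbb{E}[M_n/S_n] \sim \mathbb{E}[M_n]/((n-1)\mu_l)$ as $n \to \infty$. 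A standard Karamata tail-integration argument using the $1-1/n$ quantile $b_l(\cdot) \in \text{RV}_{1/\alpha}$ of $W_{1l}$ then shows $\mathbb{E}[M_n] = O(b_l(n)) = O(n^{1/\alpha} L(n))$ for some slowly varying $L$.

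Combining these estimates yields $\mathbb{P}(W_{1l}^\beta M_n/S_n > \epsilon) = O(n^{1/\alpha - 1} L(n))$. Multiplying by $n/k_n$ and invoking (C3),
$$\frac{n}{k_n}\, \mathbb{P}\bigl(W_{1l}^\beta M_n/S_n > \epsilon\bigr) = O\!\left(\frac{n^{1/\alpha} L(n)}{k_n}\right) = O\!\bigl(L(n)\, n^{-\kappa}\bigr),$$
which tends to $0$ since $\kappa > 0$ and slow variation preserves this polynomial decay.

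The step requiring the most care is the bound on $\mathbb{E}[M_n/S_n]$. If the marginal of $W_{1l}$ turns out to be strictly lighter than regularly varying with index $\alpha$, the direct invocation of Downey's theorem at index $\alpha$ needs a small modification. In that scenario I would fall back on the decomposition $\mathbb{E}[M_n/S_n] \leq 2\mathbb{E}[M_n]/((n-1)\mu_l) + \mathbb{P}\bigl(S_n \leq (n-1)\mu_l/2\bigr)$ and handle the second term by a truncation-plus-Hoeffding argument that produces an exponentially small bound, recovering the same asymptotic order and letting the remainder of the argument proceed unchanged.
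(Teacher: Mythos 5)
Your proposal follows essentially the same route as the paper's proof: Markov's inequality combined with the independence of $W_{1l}$ from $(M_n, S_n)$, then Downey's ratio estimate \eqref{eq:downey} to reduce $\mathbb{E}[M_n/S_n]$ to $\mathbb{E}[M_n]/((n-1)\mathbb{E}[W_{1l}])$, the regular-variation bound $\mathbb{E}[M_n] = O(b^\star(n))$ (the paper cites Proposition 2.1 of \cite{resnick2008extreme} for the sharp constant $\Gamma(1-1/\alpha)$, where you use a Karamata argument for the same order), and finally (C3) to kill the resulting $O(n^{1/\alpha}L(n)/k_n)$ term. The argument is correct as written.
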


\begin{proof}
An application of Markov's inequality and independence across the weights gives that
\begin{align*}
\frac{n}{k_n} \mathbb{P}\left(W^\beta_{1l} \frac{\underset{i \in [n]\setminus \{1\}}{\max} W_{il}}{\sum_{i \in [n]\setminus \{1\}} W_{il}} > \epsilon\right) \leq& \epsilon^{-1} \mathbb{E}[W^\beta_{1l}] \frac{n}{k_n}\mathbb{E}\left[  \frac{\underset{i \in [n]\setminus \{1\}}{\max} W_{il}}{\sum_{i \in [n]\setminus \{1\}} W_{il}}  \right] \\
\intertext{and since $W_{1l}$ is regularly varying, we may apply Theorem 9.1 of \cite{downey2007ratio} to achieve that} 
\sim& \epsilon^{-1} \mathbb{E}[W^\beta_{1l}] \frac{n}{k_n}  \frac{\mathbb{E}\left[\underset{i \in [n]\setminus \{1\}}{\max} W_{il} \right]}{(n-1)\mathbb{E}\left[ W_{1l} \right]}  \\
\intertext{and applying Proposition 2.1 of \cite{resnick2008extreme}} 
\sim& \epsilon^{-1} \frac{\mathbb{E}[W^\beta_{1l}]}{\mathbb{E}\left[ W_{1l} \right]} \frac{1}{k_n}\Gamma\left(1 -\frac{1}{\alpha}\right) b^\star(n),
\end{align*}
where $b^\star(t)$ is the $1-1/t$ quantile function of the distribution of $W_{1l}$ for $t \geq 1$.  Since $b^\star(t)$ is regularly varying with index $1/\alpha$, we may apply (C3) to obtain \eqref{eq:Ox2}.
\end{proof}

We now present the lemma that formalizes the approximation \eqref{eq:degapprox}. 

\begin{lemma}
\label{lem:gconv}
Assume (C1) and that $\mathbb{E}[\|\mathbf{W}_1\|_1] < \infty$. Then for every $l \in [L]$,
\begin{align}
\label{eq:convp}
\sum_{j = 1}^n g_l(W_{1l}W_{jl}/T_l(n)) \xrightarrow{p} c_lW_{1l}, \qquad \text{as } n \rightarrow \infty.
\end{align}
\end{lemma}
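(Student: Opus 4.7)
The plan is to use condition (C1) in the form \eqref{eq:glin} to approximate each summand by its linear part, and to handle the remainder via the largest weight. The crucial observation is algebraic: since $T_l(n) = \sum_j W_{jl}$, we have
\begin{equation*}
\sum_{j=1}^n c_l \, W_{1l} W_{jl}/T_l(n) \;=\; c_l W_{1l},
\end{equation*}
so the target limit is attained \emph{exactly} by the linear approximation, and the entire problem reduces to showing that the nonlinear remainder vanishes in probability.

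First, I would reduce to an event on which \eqref{eq:glin} applies uniformly. Let $W_{(1)l}(n) = \max_{j \in [n]} W_{jl}$, so that $W_{1l} W_{jl}/T_l(n) \le W_{1l} W_{(1)l}(n)/T_l(n)$ for all $j$. Since $\mathbb{E}[W_{1l}] < \infty$, by \cite{o1980limit} (the O'Brien theorem cited in the excerpt) we have $W_{(1)l}(n)/T_l(n) \xrightarrow{p} 0$. Because $W_{1l}$ is a fixed, a.s.\ finite random variable, Slutsky's theorem gives $W_{1l}\, W_{(1)l}(n)/T_l(n) \xrightarrow{p} 0$. Hence the event $A_n := \{W_{1l}\, W_{(1)l}(n)/T_l(n) < \delta\}$ satisfies $\mathbb{P}(A_n) \to 1$, and on $A_n$ every argument of $g_l$ lies below $\delta$, so \eqref{eq:glin} applies termwise.

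Second, on $A_n$ I would bound the error as
\begin{equation*}
\Bigl|\sum_{j=1}^n g_l(W_{1l}W_{jl}/T_l(n)) - c_l W_{1l}\Bigr|
\;\le\; M \sum_{j=1}^n \bigl(W_{1l}W_{jl}/T_l(n)\bigr)^{1+\nu}
\;=\; M\, W_{1l}^{1+\nu} \, T_l(n)^{-(1+\nu)} \sum_{j=1}^n W_{jl}^{1+\nu}.
\end{equation*}
Using the elementary bound $\sum_j W_{jl}^{1+\nu} \le W_{(1)l}(n)^{\nu}\, T_l(n)$, this collapses to
\begin{equation*}
M\, W_{1l}^{1+\nu}\, \bigl(W_{(1)l}(n)/T_l(n)\bigr)^{\nu}.
\end{equation*}
Since $W_{(1)l}(n)/T_l(n) \xrightarrow{p} 0$ and $W_{1l}^{1+\nu}$ is a fixed, a.s.\ finite random variable, another application of Slutsky shows this upper bound tends to $0$ in probability. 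Combining with $\mathbb{P}(A_n^c) \to 0$ yields \eqref{eq:convp}.

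The main obstacle is subtle rather than technical: one must avoid assuming any moment of $W_{1l}$ beyond the first. The key is the self-normalizing identity above, together with the observation that $W_{1l}^{1+\nu}$ enters only as a fixed (non-$n$-indexed) multiplier, so no higher moment condition is needed---its a.s.\ finiteness is enough to absorb it via Slutsky's theorem. The only input besides (C1) is the finite-mean condition, which is exactly what drives the O'Brien limit and the elementary bound on $\sum_j W_{jl}^{1+\nu}$.
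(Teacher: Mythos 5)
Your proposal is correct and follows essentially the same route as the paper's proof: the same reduction to the event $\{W_{1l}W_{(1)l}(n)/T_l(n) < \delta\}$ via the finite-mean maximum-to-sum limit, the same bound $M W_{1l}^{1+\nu}\bigl(W_{(1)l}(n)/T_l(n)\bigr)^{\nu}$ on the nonlinear remainder, and the same Slutsky-type conclusion. No gaps.
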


\begin{proof}
Let $l \in [L]$. Recall that since $\mathbb{E}[\|\mathbf{W}_1\|_1] < \infty$, 
\begin{align*}
W_{(1)l}(n)/T_l(n) \xrightarrow{p} 0, \qquad \text{as } n \rightarrow \infty.
\end{align*}
In particular, since $W_{1l}$ is finite almost surely.
\begin{align}
\label{eq:conv1}
W_{1l}W_{(1)l}(n)/T_l(n) \xrightarrow{p} 0.
\end{align}
Recall $M$ and $\delta$ from \eqref{eq:glin}. From condition (C1), when $W_{1l}W_{(1)l}(n)/T_l(n) < \delta$
\begin{align*}
\left|\sum_{j = 1}^n g_l(W_{1l}W_{jl}/T_l(n)) - c_lW_{1l}\right| \leq& M W^{1 + \nu}_{1l} \sum_{j = 1}^n  W^{1 + \nu}_{jl}/T^{1 + \nu}_l(n) \\
\leq& M W^{1 + \nu}_{1l}  W^{\nu}_{(1)l}(n)/T^{\nu}_l(n).
\end{align*}
Fix $\epsilon > 0$. Then
\begin{align*}
\mathbb{P}\left(  \left|\sum_{j = 1}^n g_l(W_{1l}W_{jl}/T_l(n)) - c_lW_{1l}\right| > \epsilon \right) \leq &\mathbb{P}\left( M W^{1 + \nu}_{1l}  W^{\nu}_{(1)l}(n)/T^{\nu}_l(n) > \epsilon \right) \\
&+  \mathbb{P}\left( W_{1l}W_{(1)l}(n)/T_l(n) \geq \delta \right).
\end{align*}
Again, since $W^{1 + \nu}_{11}$ is finite almost surely, as $n \rightarrow \infty$
\begin{align}
\label{eq:conv2}
W^{1 + \nu}_{1l}W^{\nu}_{(1)l}(n)/T^{\nu}_l(n) \xrightarrow{p} 0.
\end{align}
Hence applying \eqref{eq:conv1} and \eqref{eq:conv2} completes the proof.
\end{proof}

\subsection{Coupling strategy}\label{sec:couple}

In this section we present a coupling that is used to translate results regarding the degrees in the more theoretically amenable Poisson $\mathcal{L}_1$ layers to the degrees in the Bernoulli $\mathcal{L}_2$ layers in the MIRG. Let $\mathcal{B}_{X \mid Y}$ denote the law of a random variable $X$ conditioned on the random variable $Y$. Construct a coupled version of $\mathbf{A}(n)$, $\tilde{\mathbf{A}}(n) = \{ \tilde{A}_{ijl} \}_{ijl}$, as follows. For $l \in \mathcal{L}_1$, set $\tilde{A}_{ijl} = A_{ijl}$ for $1 \leq i \leq j \leq n$. For $l \in \mathcal{L}_2$, let 
\begin{align*}
\tilde{A}_{ijl} \mid \mathbf{W}_{[n]} \sim \text{Poisson}(g_l(W_{il}W_{jl}/T_l(n))) \qquad \text{independently, }  1 \leq i \leq j \leq n,
\end{align*}
where
\begin{align}
\label{eq:dtv}
d_{\text{TV}}\left( \mathcal{B}_{ \tilde{A}_{ijl} \mid \mathbf{W}_{[n]}}, \mathcal{B}_{ A_{ijl} \mid \mathbf{W}_{[n]}} \right) \leq&  g_l^2\left(\frac{W_{il}W_{jl}}{T_{l}(n)}\right), \qquad 1 \leq i \leq j \leq n.
\end{align}
Such a coupling is given by a maximal coupling between Poisson and Bernoulli random variables \citep[see Section 2.2 of][]{van2017random}. Conditional on $\mathbf{W}_{[n]}$, we enforce independence of the pairs $(\tilde{A}_{ijl}, A_{ijl})$ across edges and layers. Under said coupling, one may easily compute
\begin{align}
\label{eq:expcouple}
\mathbb{E}\left[\left|\tilde{A}_{ijl} - A_{ijl} \right| \ \bigg\vert \ \mathbf{W}_{[n]} \right]\leq& K g_l^2\left(\frac{W_{il}W_{jl}}{T_{l}(n)}\right) \qquad 1 \leq i \leq j \leq n,
\end{align}
for some $K > 0$. Additionally, Theorem 2.10 of \cite{van2017random} gives that
\begin{align}
\label{eq:dtvsum}
d_{\text{TV}}\left( \mathcal{B}_{ \sum_{j = 1}^n \tilde{A}_{ijl} \mid \mathbf{W}_{[n]}}, \mathcal{B}_{ \sum_{j = 1}^n A_{ijl} \mid \mathbf{W}_{[n]}} \right) \leq&  \sum_{j = 1}^n g_l^2\left(\frac{W_{il}W_{jl}}{T_{l}(n)}\right), \qquad i \in [n].
\end{align}
 Let the degree of node $i$ in the coupled graph be given by $\tilde{\mathbf{D}}_i = (\tilde{D}_{i1}, \dots, \tilde{D}_{iL})$ where $\tilde{D}_{il} = \sum_{j = 1}^n \tilde{A}_{ijl}$ for $l \in [L]$. Our first lemma of this section provides a useful bound for the right-hand side of \eqref{eq:dtvsum} that we will repeatedly refer to. 
 
\begin{lemma}
\label{lem:sumgsqr}
Suppose (C1) holds. There exists a constant $C > 0$ such that for any $i \in [n]$ and $l \in [L]$ with $W_{il}\frac{W_{(1)l}(n)}{T_l(n)} < \delta$
\begin{align*}
\sum_{j = 1}^n g_l^2\left(\frac{W_{il}W_{jl}}{T_{l}(n)}\right) \leq C W^{2}_{il} \frac{W_{(1)l}(n)}{T_{l}(n)}.
\end{align*}
\begin{proof}
From (C1) and \eqref{eq:glin}, we have that if $x < \delta$, then
\begin{align*}
g^2_l(x) =& (g_l(x) - c_lx)^2 + c^2_lx^2 + 2c_lx(g_l(x) - c_lx) \\
\leq& M^2 x^{2 + 2\nu} + c^2_lx^2 + 2c_lMx^{2 + \nu}.
\end{align*}
Hence
\begin{align*}
\sum_{j = 1}^n&  g_l^2\left(\frac{W_{il}W_{jl}}{T_{l}(n)}\right) \\
\leq&  \sum_{j = 1}^n \left(M^2 \left(\frac{W_{il}W_{jl}}{T_{l}(n)}\right)^{2 + 2\nu} +  c_l^2 \left(\frac{W_{il}W_{jl}}{T_{l}(n)}\right)^2 + 2 c_l M \left(\frac{W_{il}W_{jl}}{T_{l}(n)}\right)^{2 + \nu} \right) \\
\leq&  M^2W^{2 + 2\nu}_{il} \left(\frac{W_{(1)l}(n)}{T_{l}(n)}\right)^{1 + 2\nu} + c_l^2 W^2_{il} \frac{W_{(1)l}(n)}{T_{l}(n)} + 2 c_l M W^{2 + \nu}_{il} \left(\frac{W_{(1)l}(n)}{T_{l}(n)}\right)^{1 + \nu} \\
\leq& M^2 \delta^{2\nu} W^{2}_{il} \frac{W_{(1)l}(n)}{T_{l}(n)} + c_l^2 W^2_{il} \frac{W_{(1)l}(n)}{T_{l}(n)} + 2 c_l M \delta^\nu W^{2}_{il} \frac{W_{(1)l}(n)}{T_{l}(n)} \\
\leq& C  W^{2}_{il} \frac{W_{(1)l}(n)}{T_{l}(n)},
\end{align*}
where we have chosen $C = M^2 \delta^{2\nu} + \max_{l \in [L]} c_l^2 + 2 M \delta^\nu \max_{l \in [L]} c_l$.
\end{proof}
\end{lemma}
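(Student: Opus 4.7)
The plan is to obtain a pointwise upper bound for $g_l^2(x)$ on the interval $(0,\delta)$ from condition (C1), and then sum it against $j$. Writing $g_l(x) = c_l x + (g_l(x) - c_l x)$ and expanding the square gives
\begin{align*}
g_l^2(x) = c_l^2 x^2 + 2 c_l x \bigl(g_l(x) - c_l x\bigr) + \bigl(g_l(x) - c_l x\bigr)^2,
\end{align*}
and for $x < \delta$ the remainder is controlled by $|g_l(x) - c_l x| \leq M x^{1+\nu}$ via \eqref{eq:glin}. This yields the pointwise inequality $g_l^2(x) \leq c_l^2 x^2 + 2 c_l M x^{2+\nu} + M^2 x^{2+2\nu}$, with the constants $M,\delta,\nu$ independent of $l$.

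Substituting $x = W_{il}W_{jl}/T_l(n)$, the hypothesis $W_{il}W_{(1)l}(n)/T_l(n) < \delta$ together with $W_{jl} \leq W_{(1)l}(n)$ ensures that every such $x$ lies in $(0,\delta)$, so the pointwise bound can be summed over $j \in [n]$. For each exponent $p \in \{2,\,2+\nu,\,2+2\nu\}$ I would estimate
\begin{align*}
\sum_{j=1}^n \left(\frac{W_{il}W_{jl}}{T_l(n)}\right)^p \leq W_{il}^p \left(\frac{W_{(1)l}(n)}{T_l(n)}\right)^{p-1} \sum_{j=1}^n \frac{W_{jl}}{T_l(n)} = W_{il}^p \left(\frac{W_{(1)l}(n)}{T_l(n)}\right)^{p-1},
\end{align*}
where the telescoping identity $\sum_j W_{jl}/T_l(n) = 1$ does all the real work and produces the common factor $W_{il}^2 W_{(1)l}(n)/T_l(n)$ that appears in the target inequality.

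The residual factor $(W_{il}W_{(1)l}(n)/T_l(n))^{p-2}$ that remains for $p > 2$ is then bounded by $\delta^{p-2}$ using the standing hypothesis, which absorbs the two higher-order terms into the $p=2$ contribution. Combining the three resulting terms and replacing each $c_l$ by its maximum over the finite layer set — the only point where one needs to be careful, since the lemma requires $C$ uniform in $l$ — gives the stated bound with
\begin{align*}
C = M^2 \delta^{2\nu} + 2 M \delta^\nu \max_{l \in [L]} c_l + \max_{l \in [L]} c_l^2.
\end{align*}
The argument is entirely elementary with no genuine obstacle; the proof is purely algebraic and uses no moment or independence assumptions on $\mathbf{W}_{[n]}$, which is exactly why Lemma \ref{lem:sumgsqr} can later be combined with the probabilistic content of Lemma \ref{lem:Wsmall} to drive the coupling estimates for the Hill-estimator arguments.
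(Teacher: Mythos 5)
Your proposal is correct and follows essentially the same route as the paper's own proof: the same expansion of $g_l^2(x)$ around $c_l x$ using \eqref{eq:glin}, the same summation via $\sum_j W_{jl}/T_l(n) = 1$ after peeling off $(W_{(1)l}(n)/T_l(n))^{p-1}$, the same absorption of the higher-order terms by $\delta^{p-2}$, and the same final constant $C$. Nothing to add.
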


\subsection{Properties of the degree distribution}

In this section we present proofs of Lemma \ref{lem:typdeg} and Theorem \ref{thm:rv}. The proofs rely on tools developed in Sections \ref{sec:weightlem} and \ref{sec:couple}

\subsubsection{Proof of Lemma \ref{lem:typdeg}}\label{sec:typdegproof}

\begin{proof}
See that by conditioning on $\{ \mathbf{W}_i \}_{i = 1}^n$
\begin{align*}
\mathbb{P}\left(\mathbf{D}_1(n) = \mathbf{m} \right) =&\mathbb{E}\left[ \mathbb{P}\left(\mathbf{D}_1(n) = \mathbf{m} \mid \mathbf{W}_{[n]} \right) \right] \\
=& \mathbb{E}\left[ \prod_{l = 1}^L \mathbb{P}\left(D_{1l}(n) = m_l \mid \mathbf{W}_{[n]} \right) \right].
\end{align*}
For $l \in \mathcal{L}_1$, see that by applying Lemma \ref{lem:gconv}
\begin{align*}
\mathbb{P}\left(D_{1l}(n) = m_l \mid \mathbf{W}_{[n]} \right) =& \frac{\left(\sum_{j = 1}^n g_l(W_{1l}W_{jl}/T_l(n))\right)^{m_l}e^{-\sum_{j = 1}^n g_l(W_{1l}W_{jl}/T_l(n))}}{m_l!} \\
\xrightarrow{p}& \frac{\left(c_l W_{1l}\right)^{m_l}e^{-c_l W_{1l}}}{m_l!}, \qquad \text{as } n \rightarrow \infty.
\end{align*}
For $l \in \mathcal{L}_2$, we employ the coupled version of $\mathbf{A}(n)$, $\tilde{\mathbf{A}}(n)$, developed in Section \ref{sec:couple}.  
From \eqref{eq:dtvsum} and Lemma \ref{lem:sumgsqr}, we have that
\begin{align*}
d_{\text{TV}}\left( \mathcal{B}_{ \sum_{j = 1}^n \tilde{A}_{1jl} \mid \mathbf{W}_{[n]}}, \mathcal{B}_{ \sum_{j = 1}^n A_{1jl} \mid \mathbf{W}_{[n]}} \right) \leq&  \sum_{j = 1}^n g_l^2\left(\frac{W_{1l}W_{jl}}{T_{l}(n)}\right),
\end{align*}
and when $W_{1l} \frac{W_{(1)l}(n)}{T_{l}(n)} < \delta$, by Lemma \ref{lem:sumgsqr}
\begin{align*}
d_{\text{TV}}\left( \mathcal{B}_{ \sum_{j = 1}^n \tilde{A}_{1jl} \mid \mathbf{W}_{[n]}}, \mathcal{B}_{ \sum_{j = 1}^n A_{1jl} \mid \mathbf{W}_{[n]}} \right) \leq C W^{2}_{1l} \frac{W_{(1)l}(n)}{T_{l}(n)}.
\end{align*}
Hence, for any $\epsilon > 0$
\begin{align*}
\mathbb{P}\left(d_{\text{TV}}\left( \mathcal{B}_{ \sum_{j = 1}^n \tilde{A}_{1jl} \mid \mathbf{W}_{[n]}}, \mathcal{B}_{ \sum_{j = 1}^n A_{1jl} \mid \mathbf{W}_{[n]}} \right) > \epsilon \right) \leq &\mathbb{P}\left(C W^{2}_{1l} \frac{W_{(1)l}(n)}{T_{l}(n)} > \epsilon \right) \\
&+ \mathbb{P}\left(W_{1l} \frac{W_{(1)l}(n)}{T_{l}(n)} \geq \delta \right).
\end{align*}
Recalling that $\mathbb{E}[\|\mathbf{W}_1\|_1] < \infty$, $W^j_{1l}W_{(1)l}(n)/T_{l}(n)$  tends to $0$ in probability since $W_{(1)l}(n)/T_{l}(n)$ tends to zero in probability and $W^j_{1l}$ is finite almost surely for $j = 1, 2$. Hence for $l \in \mathcal{L}_2$,
\begin{align*}
\mathbb{P}\left(D_{1l}(n) = m_l \mid \mathbf{W}_{[n]} \right)  \xrightarrow{p} \frac{\left(c_l W_{1l}\right)^{m_l}e^{-c_l W_{1l}}}{m_l!},
\end{align*}
as $n \rightarrow \infty$. Thus, dominated convergence gives that as $n \rightarrow \infty$
\begin{align*}
\mathbb{P}\left(\mathbf{D}_1(n) = \mathbf{m} \right) =& \mathbb{E}\left[ \prod_{l = 1}^L \mathbb{P}\left(D_{1l}(n) = m_l \mid \mathbf{W}_{[n]} \right) \right] \rightarrow \mathbb{E}\left[ \prod_{l = 1}^L \frac{\left(c_l W_{1l}\right)^{m_l}e^{-c_l W_{1l}}}{m_l!} \right].
\end{align*}
\end{proof}

\subsubsection{Proof of Theorem \ref{thm:rv}}\label{sec:rvproof}

In this section we prove Theorem \ref{thm:rv}. Given that $\mathbb{P}\left(\boldsymbol{\mathcal{D}}_1 \in \cdot \right)$ in Lemma \ref{lem:typdeg} emits a mixed-Poisson representation, the proof of of Theorem \ref{thm:rv} relies heavily on the concentration of the Poisson distribution for large rates. Such properties are also used in the single-layer case \citep[see (A4) and Section 4 of][]{bhattacharjee2022large}. We now recall concentration results for the Poisson distribution. 

\begin{lemma}
\label{lem:mbound}
Suppose $X \sim \text{Poisson}(\lambda)$. For each $m \in \mathbb{N}$, there exists nonnegative constants $a_{m}$ and $C_m$ depending only on $m$ such that
\begin{align*}
\mathbb{E}\left( \left|X - \lambda \right|^m\right) \leq a_m \lambda^{m/2} + C_m. 
\end{align*}
\end{lemma}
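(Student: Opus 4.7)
The plan is to split by the parity of $m$, handle the even case by direct polynomial expansion of the Poisson central moments, and then reduce the odd case to the even one via Lyapunov's inequality.

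For even $m = 2k$, I would first establish that $M_n(\lambda) := \mathbb{E}[(X-\lambda)^n]$ is a polynomial in $\lambda$ of degree $\lfloor n/2 \rfloor$ with nonnegative coefficients and no constant term for $n \ge 1$. This follows by induction from the standard recursion
\[
M_{n+1}(\lambda) = \lambda\bigl(M_n'(\lambda) + n\,M_{n-1}(\lambda)\bigr), \qquad M_0(\lambda) = 1,\ M_1(\lambda) = 0,
\]
which is obtained by differentiating the series $\sum_{j \ge 0}(j-\lambda)^n e^{-\lambda}\lambda^j/j!$ termwise in $\lambda$. Writing $M_{2k}(\lambda) = \sum_{j=1}^{k} c_j \lambda^j$ with $c_j \ge 0$, the elementary bound $\lambda^j \le \lambda^k + 1$ valid for $\lambda > 0$ and $1 \le j \le k$ (treating $\lambda \ge 1$ and $\lambda < 1$ separately) yields
\[
M_{2k}(\lambda) \le \Bigl(\sum_{j=1}^k c_j\Bigr)\lambda^k + \sum_{j=1}^k c_j,
\]
which is exactly the desired form with $a_m$ and $C_m$ both taken to be $\sum_j c_j$.

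For odd $m$, I would apply Lyapunov's inequality $\mathbb{E}|X-\lambda|^m \le \bigl(\mathbb{E}|X-\lambda|^{m+1}\bigr)^{m/(m+1)}$. Since $m+1$ is even, the preceding step bounds the inner expectation by $a_{m+1}\lambda^{(m+1)/2} + C_{m+1}$. Subadditivity of the concave map $t \mapsto t^{m/(m+1)}$ then delivers
\[
\mathbb{E}|X-\lambda|^m \le a_{m+1}^{m/(m+1)}\,\lambda^{m/2} + C_{m+1}^{m/(m+1)},
\]
where the exponent collapses because $\tfrac{m+1}{2}\cdot\tfrac{m}{m+1} = \tfrac{m}{2}$. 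This gives the claim with $a_m = a_{m+1}^{m/(m+1)}$ and $C_m = C_{m+1}^{m/(m+1)}$.

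The only nontrivial ingredient is the polynomial-structure claim for the even central moments, which is classical but requires the induction on the recursion to be spelled out carefully to track both the degree and the positivity of coefficients. Everything else is mechanical, and I anticipate no substantive obstacle.
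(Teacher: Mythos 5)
Your proof is correct. Note that the paper itself offers no argument for this lemma: it is simply ``recalled'' as a known concentration property of the Poisson distribution (it is the analogue of display (A4) in the cited single-layer work), so there is no in-paper proof to compare against. Your route --- establishing via the recursion $M_{n+1}(\lambda) = \lambda\bigl(M_n'(\lambda) + n M_{n-1}(\lambda)\bigr)$ that the even central moments are polynomials in $\lambda$ of degree $m/2$ with nonnegative coefficients, bounding each monomial by $\lambda^{m/2}+1$, and then handling odd $m$ through Lyapunov's inequality together with subadditivity of $t \mapsto t^{m/(m+1)}$ --- is a clean, self-contained derivation that yields exactly the stated form $a_m\lambda^{m/2}+C_m$ with constants depending only on $m$. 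The exponent bookkeeping ($\tfrac{m+1}{2}\cdot\tfrac{m}{m+1}=\tfrac{m}{2}$) is right, and the degree/positivity induction goes through since differentiation and multiplication by $\lambda$ preserve nonnegativity of coefficients while $\max\bigl(\lfloor n/2\rfloor, \lfloor (n-1)/2\rfloor+1\bigr)=\lfloor (n+1)/2\rfloor$. The only items worth spelling out in a final write-up are the justification for termwise differentiation of the Poisson series and the (harmless) observation that the ``no constant term'' claim is not actually needed, since any constant could be absorbed into $C_m$.
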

In order to form a multivariate extension of Lemma \ref{lem:mbound}, we note that for $X_l \sim \text{Poisson}(\lambda_l)$ for $l = 1, \dots, L$, $\mathbf{X} = (X_1, \dots, X_L)$ and $\boldsymbol{\lambda} = (\lambda_1, \dots, \lambda_L)$
\begin{align}
\label{lem:mnormequal}
\mathbb{E}\left( \|\mathbf{X} - \boldsymbol{\lambda} \|^m_m \right) \leq a_m \| \boldsymbol{\lambda} \|^{m/2}_{m/2} + LC_m,
\end{align}
which, when combined with the equivalence $\ell_p$ norms, gives the following lemma.  
\begin{lemma}
\label{lem:mnorm}
Suppose $X_l \sim \text{Poisson}(\lambda_l)$ for $l = 1, \dots, L$. Let $\mathbf{X} = (X_1, \dots, X_L)$ and $\boldsymbol{\lambda} = (\lambda_1, \dots, \lambda_L)$. For each $m, p \in \mathbb{N}$ there exists nonnegative constants $a_{m}$ and $C_{m}$ depending only on $m$ such that
\begin{align*}
\mathbb{E}\left( \|\mathbf{X} - \boldsymbol{\lambda} \|_p^m  \right) \leq L^{\frac{m}{p} + 1}\left(  a_m \| \boldsymbol{\lambda} \|^{m/2}_{p} + C_m \right).
\end{align*}
\end{lemma}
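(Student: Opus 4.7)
The plan is to deduce Lemma \ref{lem:mnorm} directly from the already-established bound \eqref{lem:mnormequal},
$$\mathbb{E}\bigl(\|\mathbf{X}-\boldsymbol{\lambda}\|_m^m\bigr) \le a_m\|\boldsymbol{\lambda}\|_{m/2}^{m/2} + LC_m,$$
by invoking the standard equivalence of $\ell_p$-norms on $\mathbb{R}^L$: for any $v\in\mathbb{R}^L$ and $q_1,q_2\ge 1$, the norms satisfy $\|v\|_{q_1}\le \|v\|_{q_2}\le L^{1/q_2-1/q_1}\|v\|_{q_1}$ whenever $q_1\ge q_2$. The whole argument is a two-step translation: first translate the $\|\cdot\|_m^m$ bound into a $\|\cdot\|_p^m$ bound on the random side, then translate the $\|\cdot\|_{m/2}^{m/2}$ quantity on the right-hand side into a $\|\cdot\|_p^{m/2}$ quantity.

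First I would handle the random side. If $p\ge m$, then $\|\mathbf{X}-\boldsymbol{\lambda}\|_p\le \|\mathbf{X}-\boldsymbol{\lambda}\|_m$, so taking $m$-th powers and expectations preserves the right-hand side of \eqref{lem:mnormequal} without introducing any new factor of $L$. If $p<m$, then the equivalence gives $\|\mathbf{X}-\boldsymbol{\lambda}\|_p\le L^{1/p-1/m}\|\mathbf{X}-\boldsymbol{\lambda}\|_m$, whence raising to the $m$-th power and taking expectations yields
$$\mathbb{E}\bigl(\|\mathbf{X}-\boldsymbol{\lambda}\|_p^m\bigr) \le L^{m/p-1}\bigl(a_m\|\boldsymbol{\lambda}\|_{m/2}^{m/2}+LC_m\bigr).$$
Either way, the coefficient in front of $\|\boldsymbol{\lambda}\|_{m/2}^{m/2}$ is at most $L^{(m/p-1)\vee 0}a_m$, and the coefficient in front of $C_m$ is at most $L^{m/p}$, both of which are bounded above by $L^{m/p+1}$.

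Second I would handle the deterministic side by bounding $\|\boldsymbol{\lambda}\|_{m/2}^{m/2}$ in terms of $\|\boldsymbol{\lambda}\|_p^{m/2}$, splitting on whether $p\ge m/2$ (in which case $\|\boldsymbol{\lambda}\|_{m/2}\le L^{2/m-1/p}\|\boldsymbol{\lambda}\|_p$, so $\|\boldsymbol{\lambda}\|_{m/2}^{m/2}\le L^{1-m/(2p)}\|\boldsymbol{\lambda}\|_p^{m/2}$) or $p<m/2$ (in which case $\|\boldsymbol{\lambda}\|_{m/2}\le\|\boldsymbol{\lambda}\|_p$ directly). Combining this with the bound from the previous step and checking that each resulting exponent of $L$ lies below $m/p+1$ yields
$$\mathbb{E}\bigl(\|\mathbf{X}-\boldsymbol{\lambda}\|_p^m\bigr) \le L^{m/p+1}\bigl(a_m\|\boldsymbol{\lambda}\|_p^{m/2}+C_m\bigr),$$
which is exactly the stated conclusion with the \emph{same} constants $a_m,C_m$ as those in Lemma \ref{lem:mbound}.

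Since the univariate moment inequality of Lemma \ref{lem:mbound} and its multivariate consequence \eqref{lem:mnormequal} are already in hand, there is no genuine analytic obstacle here; the entire exercise is the bookkeeping of the exponents of $L$ across the four sub-cases $(p\ge m\text{ vs.\ }p<m)\times(p\ge m/2\text{ vs.\ }p<m/2)$. The only point that requires a little care is making sure the same pair $(a_m,C_m)$ works uniformly in $p$, which will follow once one verifies that $(m/p-1)\vee 0$, $m/p$, and the various norm-conversion exponents are all dominated by the single exponent $m/p+1$ used in the statement.
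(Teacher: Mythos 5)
Your proposal is correct and follows exactly the route the paper intends: the paper derives Lemma \ref{lem:mnorm} from \eqref{lem:mnormequal} purely by "the equivalence of $\ell_p$ norms," and your case analysis is precisely the bookkeeping that justifies this, with every exponent of $L$ indeed dominated by $m/p+1$. (The only nit: when $p>m$ the coefficient of $C_m$ is $L^1$ rather than $L^{m/p}$, but since $1\le m/p+1$ this does not affect the stated bound.)
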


Which such results in hand, we are now prepared to prove Theorem \ref{thm:rv}. 

\begin{proof}[Proof of Theorem \ref{thm:rv}]
We only prove part (b) since  part (a) is simpler and proven similarly. Let $f \in \mathcal{C}(\mathbb{R}_+^L\setminus \mathbb{C}_0)$ be uniformly continuous. It suffices to show that
\begin{align}
\label{eq:mapprox}
t\left| \mathbb{E}[f(\boldsymbol{\mathcal{D}}_1/b_0(t))] - \mathbb{E}[f(\boldsymbol{\mathcal{W}}_1/b_0(t))] \right| \rightarrow 0, \qquad \text{as } t \rightarrow \infty.
\end{align}
Since $f \in \mathcal{C}(\mathbb{R}_+^L\setminus \mathbb{C}_0)$, it has support bounded away from $\mathbb{C}_0$. That is, there exists an $\theta > 0$ such that $f(\mathbf{x}) = 0$ for all $\mathbf{x} \in \mathbb{R}_+^L\setminus \mathbb{C}_0$ with $d_p(\mathbf{x}, \mathbb{C}_0) \leq \theta$. Thus
\begin{align*}
t&\left| \mathbb{E}[f(\boldsymbol{\mathcal{D}}_1/b_0(t))] - \mathbb{E}[f(\boldsymbol{\mathcal{W}}_1/b_0(t))] \right| \\
\leq& t\mathbb{E}\left[  \left| f(\boldsymbol{\mathcal{D}}_1/b_0(t)) - f(\boldsymbol{\mathcal{W}}_1/b_0(t)) \right|\right] \\
=& t\mathbb{E}\left[ \left| f(\boldsymbol{\mathcal{D}}_1/b_0(t)) - f(\boldsymbol{\mathcal{W}}_1/b_0(t))\right|  1_{\left\lbrace d_p(\boldsymbol{\mathcal{D}}_1/b_0(t), \mathbb{C}_0) > \theta,  d_p(\boldsymbol{\mathcal{W}}_1/b_0(t), \mathbb{C}_0) > \theta  \right\rbrace}  \right] \\.
&+ t\mathbb{E}\left[f(\boldsymbol{\mathcal{D}}_1/b_0(t))1_{\left\lbrace d_p(\boldsymbol{\mathcal{D}}_1/b_0(t), \mathbb{C}_0) > \theta,  d_p(\boldsymbol{\mathcal{W}}_1/b_0(t), \mathbb{C}_0) \leq \theta  \right\rbrace}  \right] \\
&+ t\mathbb{E}\left[f(\boldsymbol{\mathcal{W}}_1/b_0(t))1_{\left\lbrace d_p(\boldsymbol{\mathcal{D}}_1/b_0(t), \mathbb{C}_0) \leq \theta,  d_p(\boldsymbol{\mathcal{W}}_1/b_0(t), \mathbb{C}_0) > \theta  \right\rbrace}  \right] \\
\equiv& A_1(t) + A_2(t) + A_3(t).
\end{align*}
We show that $A_1(t), A_2(t)$ and $A_3(t)$ all tend to zero as $t \rightarrow \infty$. For $A_1(t)$, recall that, conditional on $\boldsymbol{\mathcal{W}}_1$, the elements of $\boldsymbol{\mathcal{D}}_1$ are conditionally independent Poisson random variables. Hence we may apply Lemma \ref{lem:mnorm} to supply the existence of constants $a_{p}$ and $C_{p}$ such that, almost surely
\begin{align}
\label{eq:lemapp}
\mathbb{E}\left[ \|\boldsymbol{\mathcal{D}}_1 - \boldsymbol{\mathcal{W}}_1 \|_p^p \mid \boldsymbol{\mathcal{W}}_1 \right] \leq L^{2}\left(  a_p \| \boldsymbol{\mathcal{W}}_1 \|^{p/2}_{p} + C_p \right).
\end{align}
Fix $\epsilon > 0$. Define the random variable 
\begin{align*}
\delta_p = \delta_p(\boldsymbol{\mathcal{W}}_1, t) = \frac{L^{2/p}(a_{p} \| \boldsymbol{\mathcal{W}}_1 \|^{p/2}_{p} + C_{p})^{1/p}}{b_0(t)\epsilon^{1/p}},
\end{align*}
Note that if $\boldsymbol{\mathcal{D}}_1/b_0(t), \boldsymbol{\mathcal{W}}_1/b_0(t)$ are bounded away from $\mathbb{C}_0$ and
\begin{align*}
\|\boldsymbol{\mathcal{D}}_1/b_0(t) - \boldsymbol{\mathcal{W}}_1/b_0(t) \|_p \leq \delta_p,
\end{align*}
then
\begin{align*}
\left| f(\boldsymbol{\mathcal{D}}_1/b_0(t)) - f(\boldsymbol{\mathcal{W}}_1/b_0(t))\right| \leq \Delta_f^p(\delta_p).
\end{align*}
Otherwise, since $f \in \mathcal{C}(\mathbb{R}_+^L\setminus \mathbb{C}_0)$, there exists an $B > 0$ such that for all $\mathbf{x} \in \mathbb{R}_+^L\setminus \mathbb{C}_0$, $f(\mathbf{x}) \leq B$. Thus we may bound $A_1(t)$ by
\begin{align*}
t\mathbb{E}&\left[ \left| f(\boldsymbol{\mathcal{D}}_1/b_0(t)) - f(\boldsymbol{\mathcal{W}}_1/b_0(t))\right|  1_{\left\lbrace  d_p(\boldsymbol{\mathcal{D}}_1/b_0(t), \mathbb{C}_0) > \theta,  d_p(\boldsymbol{\mathcal{W}}_1/b_0(t), \mathbb{C}_0) > \theta  \right\rbrace}  \right] \\
\leq& t\mathbb{E}\left[\Delta_f^p(\delta_p) 1_{\left\lbrace d_p(\boldsymbol{\mathcal{W}}_1/b_0(t), \mathbb{C}_0) > \theta  \right\rbrace}  \right] + 2B t\mathbb{E}\left[ 1_{\left\lbrace\|\boldsymbol{\mathcal{D}}_1/b_0(t) - \boldsymbol{\mathcal{W}}_1/b_0(t) \|_p > \delta_p \right\rbrace}1_{\left\lbrace d_p(\boldsymbol{\mathcal{W}}_1/b_0(t), \mathbb{C}_0) > \theta  \right\rbrace}\right] \\
=& t\mathbb{E}\left[\Delta_f^p(\delta_p) 1_{\left\lbrace d_p(\boldsymbol{\mathcal{W}}_1, \mathbb{C}_0) > \theta b_0(t)  \right\rbrace}  \right] + 2B t\mathbb{E}\left[ 1_{\left\lbrace\|\boldsymbol{\mathcal{D}}_1/b_0(t) - \boldsymbol{\mathcal{W}}_1/b_0(t) \|_p > \delta_p \right\rbrace}1_{\left\lbrace d_p(\boldsymbol{\mathcal{W}}_1, \mathbb{C}_0) > \theta  b_0(t) \right\rbrace}\right] \\
\equiv& A_{11}(t) + A_{12}(t),
\end{align*}
where we have noted that
\begin{align*}
\left\lbrace \mathbf{x} \in \mathbb{R}_+^L : \inf_{\mathbf{y} \in \mathbb{C}_0} \left\| \frac{\mathbf{x}}{b_0(t)} - \mathbf{y} \right\|_p > \eta \right\rbrace =& \left\lbrace \mathbf{x} \in \mathbb{R}_+^L : \inf_{\mathbf{y} \in \mathbb{C}_0} \left\| \mathbf{x} - b_0(t)\mathbf{y} \right\|_p > \eta b_0(t) \right\rbrace \\
=& \left\lbrace \mathbf{x} \in \mathbb{R}_+^L : \inf_{\mathbf{z} \in \mathbb{C}_0} \left\| \mathbf{x} - \mathbf{z} \right\|_p > \eta b_0(t) \right\rbrace,
\end{align*}
since $\mathbb{C}_0$ is a cone. Hence in order to prove that $A_1(t)$ tends to $0$ as $t \rightarrow \infty$, it suffices to show that $A_{11}(t)$ and $A_{12}(t)$ converge to $0$ as $t \rightarrow \infty$. For $A_{11}(t)$, see that since $\Delta_f^p(\delta) \leq 2B$ for any $\delta > 0$ we have that $A_{11}(t)$ is equivalent to
\begin{align*}
t\mathbb{E}&\left[\Delta_f^p(\delta_p) 1_{\left\lbrace d_p(\boldsymbol{\mathcal{W}}_1, \mathbb{C}_0) > \theta b_0(t)  \right\rbrace}  \right] \\
=& t\mathbb{E}\left[\Delta_f^p(\delta_p) 1_{\left\lbrace d_p(\boldsymbol{\mathcal{W}}_1, \mathbb{C}_0) > \theta b_0(t) \right\rbrace}1_{\left\lbrace \delta_p \leq \epsilon\right\rbrace}  \right] + t\mathbb{E}\left[\Delta_f^p(\delta_p) 1_{\left\lbrace d_p(\boldsymbol{\mathcal{W}}_1, \mathbb{C}_0) > \theta b_0(t)  \right\rbrace}1_{\left\lbrace \delta_p > \epsilon\right\rbrace}  \right] \\
\leq& \Delta_f^p(\epsilon) t\mathbb{P}\left( d_p(\boldsymbol{\mathcal{W}}_1, \mathbb{C}_0) > \theta b_0(t)  \right) + 2B t\mathbb{P}\left(\delta_p > \epsilon  \right) \\
=& \Delta_f^p(\epsilon) t\mathbb{P}\left( d_p(\boldsymbol{\mathcal{W}}_1, \mathbb{C}_0) > \theta b_0(t)  \right) + 2B t\mathbb{P}\left(L^{2/p}\left(a_{p}\|\boldsymbol{\mathcal{W}}_1\|_{p}^{p/2} + C_{p} \right)^{1/p} > b_0(t)\epsilon^{(p+1)/p}  \right),
\end{align*}
Due to regular variation of $\boldsymbol{\mathcal{W}}_1$ on $\mathbb{R}_+^L \setminus \mathbb{C}_0$, we have that by the GPOLAR transform as $t \rightarrow \infty$
\begin{align}
\label{eq:A1_1}
 t\mathbb{P}\left( d_p(\boldsymbol{\mathcal{W}}_1, \mathbb{C}_0) > \theta b_0(t)  \right) \rightarrow \theta^{-\alpha_0}.
\end{align}
Additionally, since $\boldsymbol{\mathcal{W}}_1$ is multivariate regularly varying on $\mathbb{R}_+^L \setminus \{\mathbf{0} \}$ with tail index $\alpha$, $\| \boldsymbol{\mathcal{W}}_1 \|_{p}$ is univariate regularly varying with tail index $\alpha$. Thus the quantity $(a_{p} \| \boldsymbol{\mathcal{W}}_1\|^{p/2}_{p} + C_{p})^{1/p}$ is regularly varying with tail index $2\alpha$ and 
\begin{align}
\label{eq:A1_2}
t\mathbb{P}\left(L^{2/p}\left(a_{p}\|\boldsymbol{\mathcal{W}}_1 \|_{p}^{p/2} + C_{p} \right)^{1/p} > b_0(t)\epsilon^{(p+1)/p}  \right) \rightarrow 0, \qquad \text{as } t \rightarrow \infty,
\end{align}
since $b_0(t) \in RV_{1/\alpha_0}$ and $\alpha_0 \in [\alpha, 2\alpha)$. Thus, from \eqref{eq:A1_1} and \eqref{eq:A1_2}, we have that
\begin{align*}
\limsup_{t \rightarrow \infty} A_1(t) \leq \Delta_f^p(\epsilon) \theta^{-\alpha_0}.
\end{align*}
For $A_{12}(t)$, see that by tower property and Markov's inequality
\begin{align*}
A_{12}(t) =& 2Bt\mathbb{E}\left[ \mathbb{P}\left( \|\boldsymbol{\mathcal{D}}_1/b_0(t) - \boldsymbol{\mathcal{W}}_1/b_0(t) \|_p > \delta_p \mid \boldsymbol{\mathcal{W}}_1\right)1_{\left\lbrace d_p(\boldsymbol{\mathcal{W}}_1, \mathbb{C}_0) > \theta b_0(t) \right\rbrace }\right] \\
\leq& 2Bt \mathbb{E}\left[ \delta_p^{-p} \mathbb{E}\left[\|\boldsymbol{\mathcal{D}}_1/b_0(t) - \boldsymbol{\mathcal{W}}_1/b_0(t) \|^p_p \mid \boldsymbol{\mathcal{W}}_1 \right]1_{\left\lbrace d_p(\boldsymbol{\mathcal{W}}_1, \mathbb{C}_0) > \theta b_0(t) \right\rbrace } \right] \\
=& 2Bt \mathbb{E}\left[ (b_0(t)\delta_p )^{-p}\mathbb{E}\left[\|\boldsymbol{\mathcal{D}}_1 - \boldsymbol{\mathcal{W}}_1 \|^p_p \mid\boldsymbol{\mathcal{W}}_1 \right]1_{\left\lbrace d_p(\boldsymbol{\mathcal{W}}_1, \mathbb{C}_0) > \theta b_0(t)\right\rbrace } \right] \\
\leq& 2Bt \mathbb{E}\left[(b_0(t)\delta_p )^{-p}  L^2\left(a_{p}  \|\boldsymbol{\mathcal{W}}_1 \|_{p}^{p/2} + C_{p} \right)1_{\left\lbrace d_p(\boldsymbol{\mathcal{W}}_1, \mathbb{C}_0) > \theta b_0(t) \right\rbrace } \right] \\
=& 2B \epsilon t \mathbb{P}\left( d_p(\boldsymbol{\mathcal{W}}_1, \mathbb{C}_0) > \theta b_0(t) \right),
\end{align*}
where we have applied \eqref{eq:lemapp} in the second to last line and the definition of $\delta_p$ in the penultimate step. From \eqref{eq:A1_1}, we thus have that
\begin{align*}
\limsup_{t \rightarrow \infty} A_{12}(t) \leq 2B\epsilon  \theta^{-\alpha_0}.
\end{align*}
Thus, returning to our original goal, we achieve that 
\begin{align*}
\limsup_{t \rightarrow \infty} A_1(t) \leq& \limsup_{t \rightarrow \infty} A_{11}(t) + \limsup_{t \rightarrow \infty} A_{12}(t) \\
\leq& (\Delta_f^p(\epsilon) + 2K\epsilon) \theta^{-\alpha_0}.
\end{align*}
Since $\epsilon > 0$ was arbitrary, we have that $A_1(t) \rightarrow 0$ as $t \rightarrow \infty$. Fix $\eta \in (0, \theta)$. For $A_2(t)$ and $A_3(t)$, note that
\begin{align*}
A_2(t) \leq& Bt\mathbb{P}\left(d_p(\boldsymbol{\mathcal{D}}_1/b_0(t), \mathbb{C}_0) > \theta,  d_p(\boldsymbol{\mathcal{W}}_1/b_0(t), \mathbb{C}_0) \leq \theta  \right) \\
\leq&  Bt\mathbb{P}\left(d_p(\boldsymbol{\mathcal{W}}_1/b_0(t), \mathbb{C}_0) \in (\theta - \eta ,\theta]  \right) +  Bt\mathbb{P}\left(d_p(\boldsymbol{\mathcal{D}}_1, \mathbb{C}_0) - d_p(\boldsymbol{\mathcal{W}}_1, \mathbb{C}_0) > \eta b_0(t)  \right) \\
\leq& Bt\mathbb{P}\left(d_p(\boldsymbol{\mathcal{W}}_1/b_0(t), \mathbb{C}_0) \in (\theta - \eta ,\theta]  \right) +  Bt\mathbb{P}\left(\| \boldsymbol{\mathcal{D}}_1 - \boldsymbol{\mathcal{W}}_1 \|_p > \eta b_0(t)  \right) \\
=& A_{21}(t) + \tilde{A}(t),
\end{align*}
and similarly
\begin{align*}
A_3(t) \leq& Bt\mathbb{P}\left(d_p(\boldsymbol{\mathcal{D}}_1/b_0(t), \mathbb{C}_0) \leq \theta,  d_p(\boldsymbol{\mathcal{W}}_1/b_0(t), \mathbb{C}_0) > \theta  \right) \\
\leq&  Bt\mathbb{P}\left(d_p(\boldsymbol{\mathcal{W}}_1/b_0(t), \mathbb{C}_0) \in (\theta,\theta + \eta]  \right) +  Bt\mathbb{P}\left(d_p(\boldsymbol{\mathcal{D}}_1, \mathbb{C}_0) - d_p(\boldsymbol{\mathcal{W}}_1, \mathbb{C}_0) > \eta b_0(t)  \right) \\
\leq& Bt\mathbb{P}\left(d_p(\boldsymbol{\mathcal{W}}_1/b_0(t), \mathbb{C}_0) \in (\theta ,\theta + \eta]  \right) +  Bt\mathbb{P}\left(\| \boldsymbol{\mathcal{D}}_1 - \boldsymbol{\mathcal{W}}_1 \|_p > \eta b_0(t)  \right) \\
=& A_{31}(t) + \tilde{A}(t).
\end{align*}
Analyzing $\tilde{A}(t)$, let $s \in (\alpha_0, 2\alpha)$ and $m = \ceil{s}$. Then by Markov and Jensen inequality,
\begin{align*}
\tilde{A}(t) \leq& \frac{Bt}{\eta^s b^s_0(t)}\mathbb{E}\left[\mathbb{E}\left[ \| \boldsymbol{\mathcal{D}}_1 - \boldsymbol{\mathcal{W}}_1 \|^s_p \mid \boldsymbol{\mathcal{W}}_1 \right] \right] \\
\leq& \frac{Bt}{\eta^s b^s_0(t)}\mathbb{E}\left[\left(\mathbb{E}\left[ \| \boldsymbol{\mathcal{D}}_1 - \boldsymbol{\mathcal{W}}_1 \|^m_p \mid \boldsymbol{\mathcal{W}}_1 \right]\right)^{\frac{s}{m}}\right] \\
\leq& \frac{Bt}{\eta^s b^s_0(t)}\mathbb{E}\left[\left(L^{\frac{m}{p} + 1}\left(a_m \| \boldsymbol{\mathcal{W}}_1\|_p^{m/2} + C_m \right)\right)^{\frac{s}{m}}\right].
\end{align*}
The random variable inside the expectation is reguarly varying with tail index $2\alpha/s > 1$ and hence the expectation is finite. In addition, $b^s_0(t)$ is regularly varying with index $s/\alpha_0 > 1$ and thus $\tilde{A}(t) \rightarrow 0$ as $t \rightarrow \infty$. For $A_{21}(t)$ and $A_{31}(t)$, the GPOLAR transform gives that as $t \rightarrow \infty$,
\begin{align*}
&A_{21}(t) \rightarrow (\theta - \eta)^{-\alpha_0} - \theta^{-\alpha_0}, \\
&A_{31}(t) \rightarrow \theta^{-\alpha_0} - (\theta + \eta)^{-\alpha_0}.
\end{align*}
Since $\eta > 0$ is arbitrary, $A_2(t), A_3(t) \rightarrow 0$ as $t \rightarrow \infty$. Thus the proof is complete. 
\end{proof}

\subsubsection{Proof of \eqref{eq:example_final} in Example \ref{ex:example}}\label{sec:example}

In order to prove \eqref{eq:example_final}, it suffices to consider the vague convergence of measures; see for instance \cite[Chapter~3.3.5]{resnick2007heavy}. We include the definition below. Let $\mathbb{E}$ be a locally compact topological space that has a countable base with Borel sigma algebra $\mathcal{E}$. Let $M_+(\mathbb{E})$ be the space of Radon measures on $\mathcal{E}$. Then a sequence of measures $\mu_n \in M_+(\mathbb{E})$ converges vaguely to $\mu \in M_+(\mathbb{E})$, written $\mu_n \rightarrow \mu$, if for all nonnegative, compactly supported continuous functions on $\mathcal{E}$
\begin{align*}
\int f d\mu_n \rightarrow \int f d\mu, \qquad \text{as } n \rightarrow \infty. 
\end{align*}
With vague convergence in hand, we may now prove \eqref{eq:example_final}.
\begin{proof}[Proof of \eqref{eq:example_final}]
We first show that
\begin{align}
\label{eq:triconv}
t\mathbb{P}\left( \left(\frac{|\mathcal{D}_{11} - \mathcal{D}_{12}|}{\sqrt{2W_{11}}}, \frac{\sqrt{2}\mathcal{D}_{11}/\sqrt{W_{11}}}{ |\mathcal{D}_{11} - \mathcal{D}_{12}|}, \frac{\sqrt{W_{11}}}{b_0(t)} \right) \in \cdot \right)  \xrightarrow{v} (S \times \nu_{2\alpha})(\cdot),
\end{align}
in $M_+([0, \infty]^2 \times (0, \infty])$ where $S$ is the degenerate probability measure given by
\begin{align*}
S(\cdot) = \mathbb{P}\left((|Z|, 1/|Z|) \in \cdot \right).
\end{align*}
Since $\sqrt{W_{11}}$ is marginally regularly varying, it suffices to show that for any $x >0$ and bounded, continuous function $h$ on $[0, \infty]^2$
\begin{align}
\label{eq:weakconvg}
t\mathbb{E}\left[1_{\left\lbrace \sqrt{W_{11}} > b_0(t)x \right\rbrace} h\left(\frac{|\mathcal{D}_{11} - \mathcal{D}_{12}|}{\sqrt{2W_{11}}}, \frac{\sqrt{2}\mathcal{D}_{11}/\sqrt{W_{11}}}{ |\mathcal{D}_{11} - \mathcal{D}_{12}|} \right) \right] \rightarrow x^{-2\alpha}E\left[h\left(|Z|, \frac{1}{|Z|}\right) \right],
\end{align}
as $t \rightarrow \infty$. First note that for independent Poisson random variables $X, Y$ with common rate $\lambda$, it is easily seen that as $\lambda \rightarrow \infty$
\begin{align*}
|X-Y|/\sqrt{2\lambda} \xrightarrow{d} |Z|, \qquad  X/\lambda \xrightarrow{p} 1,
\end{align*}
where $Z$ is a standard normal random variable. Hence the joint weak convergence of $(|X-Y|/\sqrt{2\lambda},X/\lambda)$ is obtained and the continuous mapping theorem gives that as $\lambda \rightarrow \infty$
\begin{align*}
\left(\frac{|X-Y|}{\sqrt{2\lambda}}, \frac{\sqrt{2}X/\sqrt{\lambda}}{|X-Y|}\right) \xrightarrow{d} \left(|Z|,1/|Z|\right),
\end{align*}
which, in particular, implies that as $\lambda \rightarrow \infty$
\begin{align}
\label{eq:weakconv}
\mathbb{E}\left[h\left(\frac{|X-Y|}{\sqrt{2\lambda}}, \frac{\sqrt{2}X/\sqrt{\lambda}}{|X-Y|} \right) \right] \rightarrow \mathbb{E}\left[h\left(|Z|,1/|Z|\right) \right].
\end{align}
Fix $\epsilon > 0$. From \eqref{eq:weakconv}, we may extract the existence of a $\lambda^\star > 0$ such that
\begin{align}
\label{eq:weakconvbd}
\sup_{\lambda > \lambda^\star} \left| \mathbb{E}\left[h\left(\frac{|X-Y|}{\sqrt{2\lambda}}, \frac{\sqrt{2}X/\sqrt{\lambda}}{|X-Y|} \right) - h\left(|Z|,\frac{1}{|Z|} \right) \right] \right| < \epsilon.
\end{align}
Towards proving \eqref{eq:weakconvg} then, see that we may write
\begin{align*}
&t\left| \mathbb{E}\left[ 1_{\left\lbrace \sqrt{W_{11}} > b_0(t)x \right\rbrace}h\left(\frac{|\mathcal{D}_{11} - \mathcal{D}_{12}|}{\sqrt{2W_{11}}}, \frac{\sqrt{2}\mathcal{D}_{11}/\sqrt{W_{11}}}{ |\mathcal{D}_{11} - \mathcal{D}_{12}|} \right)\right] - x^{-2\alpha}E\left[h\left(|Z|, \frac{1}{|Z|}\right) \right] \right| \\
&\leq  t\left| \mathbb{E}\left[1_{\left\lbrace \sqrt{W_{11}} > b_0(t)x \right\rbrace}\left\lbrace h\left(\frac{|\mathcal{D}_{11} - \mathcal{D}_{12}|}{\sqrt{2W_{11}}}, \frac{\sqrt{2}\mathcal{D}_{11}/\sqrt{W_{11}}}{ |\mathcal{D}_{11} - \mathcal{D}_{12}|} \right)- h\left(|Z|, \frac{1}{|Z|}\right) \right\rbrace \right] \right| \\
& \ \ \ + \left| tE\left[1_{\left\lbrace \sqrt{W_{11}} > b_0(t)x \right\rbrace}h\left(|Z|, \frac{1}{|Z|}\right) \right] - x^{-2\alpha}E\left[h\left(|Z|, \frac{1}{|Z|}\right) \right] \right| \\
&= C_1(t) + C_2(t).
\end{align*}
Using the tower property, see that
\begin{align*}
C_1(t) \leq&  t\mathbb{E}\left[1_{\left\lbrace \sqrt{W_{11}} > b_0(t)x \right\rbrace}\left| \mathbb{E}\left[ h\left(\frac{|\mathcal{D}_{11} - \mathcal{D}_{12}|}{\sqrt{2W_{11}}}, \frac{\sqrt{2}\mathcal{D}_{11}/\sqrt{W_{11}}}{ |\mathcal{D}_{11} - \mathcal{D}_{12}|} \right)- h\left(|Z|, \frac{1}{|Z|}\right) \Bigg\vert \ W_{11} \right] \right| \right] \\
\intertext{and choosing $t$ large enough so that $b^2_0(t)x^2 > \lambda^\star$, we have that} 
\leq& \epsilon t\mathbb{P}\left(\sqrt{W_{11}} > b_0(t)x \right) \rightarrow \epsilon x^{-2\alpha},
\end{align*}
as $t \rightarrow \infty$. Hence, $C_1(t) \rightarrow 0$ as $t \rightarrow \infty$. Since $h$ is bounded, we may apply regular variation of $\sqrt{W_{11}}$ to conclude that $C_2(t) \rightarrow 0$ as $t \rightarrow \infty$. Hence \eqref{eq:weakconvg} is proven, as well as \eqref{eq:triconv}. We next define the mapping $\chi: [0, \infty]^2 \times (0, \infty] \mapsto [0, \infty]^2 \times (0, \infty]$ 
\begin{align*}
\chi\left((x, y), a \right) = \left((ax, y), a \right).
\end{align*}
This mapping satisfies the conditions in Proposition 5.5 of \cite{resnick2007heavy} and thus we may apply it to the convergence in \eqref{eq:triconv} to obtain that
\begin{align}
\label{eq:triconv2}
t\mathbb{P}\left( \left(\frac{|\mathcal{D}_{11} - \mathcal{D}_{12}|}{\sqrt{2}b_0(t)}, \frac{\sqrt{2}\mathcal{D}_{11}/\sqrt{W_{11}}}{|\mathcal{D}_{11} - \mathcal{D}_{12}|}, \frac{\sqrt{W_{11}}}{b_0(t)} \right) \in \cdot \right)  \xrightarrow{v} (S \times \nu_{2\alpha}) \circ \chi^{-1}(\cdot),
\end{align}
in $ M_+([0, \infty]^2 \times (0, \infty])$. From \eqref{eq:triconv2}, we aim to prove that
\begin{align}
\label{eq:finalconv}
t\mathbb{P}\left( \left(\frac{|\mathcal{D}_{11} - \mathcal{D}_{12}|}{\sqrt{2}b_0(t)}, \frac{\sqrt{2}\mathcal{D}_{11}/\sqrt{W_{11}}}{|\mathcal{D}_{11} - \mathcal{D}_{12}|}\right) \in \cdot \right)  \xrightarrow{v} (S \times \nu_{2\alpha}) \circ \chi^{-1}(\cdot \times (0, \infty]),
\end{align}
in $ M_+((0, \infty] \times [0, \infty])$. Let $f$ be a positive, bounded and continuous function with compact support in $(0, \infty] \times [0, \infty]$. Since the support is compact, we may posit the existence of a $\theta > 0$ such that the compact support of $f$ lies outside of $[0, \theta] \times [0, \infty]$. From \eqref{eq:triconv2}, we then have for any $\tau > 0$
\begin{align*}
t\mathbb{E}\left[f\left( \frac{|\mathcal{D}_{11} - \mathcal{D}_{12}|}{\sqrt{2}b_0(t)}, \frac{\sqrt{2}\mathcal{D}_{11}/\sqrt{W}_{11}}{ |\mathcal{D}_{11} - \mathcal{D}_{12}|} \right)1_{\left\lbrace \sqrt{W_{11}} > b_0(t)\tau \right\rbrace}\right]& \\
\rightarrow \int_{\tau}^\infty &\mathbb{E}\left( f\left(x|Z|, \frac{1}{|Z|}\right)\right)\nu_{2\alpha}(dx).
\end{align*}
Hence, in order to show \eqref{eq:finalconv}, it suffices to show that
\begin{align*}
\lim_{\tau \rightarrow 0}\limsup_{t \rightarrow \infty} t\mathbb{E}\left[f\left( \frac{|\mathcal{D}_{11} - \mathcal{D}_{12}|}{\sqrt{2}b_0(t)}, \frac{\sqrt{2}\mathcal{D}_{11}/\sqrt{W_{11}}}{ |\mathcal{D}_{11} - \mathcal{D}_{12}|} \right)1_{\left\lbrace \sqrt{W_{11}} \leq b_0(t)\tau \right\rbrace}\right] = 0.
\end{align*}
Since $f$ is bounded and supported on the complement of $[0, \theta] \times [0, \infty]$, we have that
\begin{align*}
t\mathbb{E}&\left[f\left( \frac{|\mathcal{D}_{11} - \mathcal{D}_{12}|}{\sqrt{2}b_0(t)}, \frac{\sqrt{2}\mathcal{D}_{11}/\sqrt{W}_{11}}{ |\mathcal{D}_{11} - \mathcal{D}_{12}|} \right)1_{\left\lbrace \sqrt{W_{11}} \leq b_0(t)\tau \right\rbrace}\right] \\
\leq& \sup_{\mathbf{x} \in \mathbb{R}_+^2}|f(\mathbf{x})| t\mathbb{P}\left(\frac{|\mathcal{D}_{11} - \mathcal{D}_{12}|}{\sqrt{2}b_0(t)} > \theta , \sqrt{W_{11}} \leq b_0(t)\tau\right) \\
\equiv& \sup_{\mathbf{x} \in \mathbb{R}_+^2}|f(\mathbf{x})|B(t, \tau).
\end{align*}
Hence, it suffices to show that $\lim_{\tau \rightarrow 0}\limsup_{t \rightarrow \infty}B(t, \tau) =0$. For $B(t, \tau)$, we employ Lemma \ref{lem:mbound} to achieve that for any $m \in \mathbb{N}$
\begin{align*}
B(t, \tau) =&t\mathbb{E}\left[\mathbb{P}\left(\frac{|\mathcal{D}_{11} - \mathcal{D}_{12}|}{\sqrt{2}b_0(t)} > \theta \mid W_{11} \right) 1_{\left\lbrace \sqrt{W_{11}} \leq b_0(t)\tau \right\rbrace}\right] \\
\leq& 2^{m/2}\theta^{-m}\frac{t}{b^m_0(t)}\mathbb{E}\left[(a_mW^{m/2}_{11} + C_m) 1_{\left\lbrace \sqrt{W_{11}} \leq b_0(t)\tau \right\rbrace}\right] \\
\leq& 2^{m/2}\theta^{-m}a_m\frac{t}{b^m_0(t)}\mathbb{E}\left[W^{m/2}_{11} 1_{\left\lbrace \sqrt{W_{11}} \leq b_0(t)\tau \right\rbrace}\right] +  2^{m/2}\theta^{-m}C_m\frac{t}{b^m_0(t)}.
\end{align*}
Choosing $m > 2\alpha$ makes the second hand term in the previous display tend to zero. By Karamata's Theorem, we also achieve that
\begin{align*}
\frac{t}{b^m_0(t)}\mathbb{E}\left[W^{m/2}_{11} 1_{\left\lbrace \sqrt{W_{11}} \leq b_0(t)\tau \right\rbrace}\right] \leq& \frac{t}{b^m_0(t)} \int_0^{b_0^{m}(t)\tau^m}\mathbb{P}\left( W_{11}^{m/2} > x \right) dx \\
\sim& \left(1 - \frac{2\alpha}{m}\right)\tau^m t\mathbb{P}\left( W_{11}^{m/2} > b_0^{m}(t)\tau^m \right) \\
\sim& \left(1 - \frac{2\alpha}{m}\right)\tau^{m-2\alpha}.
\end{align*}
Hence $\lim_{\tau \rightarrow 0}\limsup_{t \rightarrow \infty}B(t, \tau) = 0$ and we have proven \eqref{eq:finalconv}. One may evaluate the convergence \eqref{eq:finalconv} on sets of the form $(u, \infty] \times (v, \infty]$ $u, v \geq 0$ to obtain that as $t \rightarrow \infty$
\begin{align*}
t\mathbb{P}\left( \frac{|\mathcal{D}_{11} - \mathcal{D}_{12}|}{\sqrt{2}b_0(t)} > u, \frac{\sqrt{2}\mathcal{D}_{11}/\sqrt{W_{11}}}{|\mathcal{D}_{11} - \mathcal{D}_{12}|} > v\right) \rightarrow u^{-2\alpha} \cdot 2\int_0^{1/v} z^{2\alpha} \phi(z) dz,
\end{align*}
where $\phi$ is the standard normal density. 
\end{proof}

\subsection{Tail empirical measure approximation}\label{sec:tempapprox}

In this section we present results that allow us to approximate the tail empirical measure of the degrees \eqref{eq:tailempD} by the tail empirical measure of the weights \eqref{eq:tailempW}. From \eqref{eq:degreecondNR}, we expect that the large degrees in layers $\mathcal{L}_1$ will concentrate around their conditional mean due to the strong concentration of Poisson distributions with large rates. Additionally, we expect the conditional means to concentrate around the scaled weights, as indicated by Lemma \ref{lem:gconv}. We also expect results derived from the layers in $\mathcal{L}_1$ to translate to layers in $\mathcal{L}_2$ using the coupling developed in Section \ref{sec:couple}. Thus, our proof strategy proceeds as follows.
\begin{itemize}
\item[1.] In Section \ref{sec:degconc}, we prove concentration of the degrees around the conditional means for two subsets of models:
\begin{itemize}
\item[(i)]  $|\mathcal{L}_1| = L$ and
\item[(ii)]  $|\mathcal{L}_1| < L, |\mathcal{L}_2| \leq L$, $\alpha \in (1, 2]$.
\end{itemize}
We are not able to use the coupling in Section \ref{sec:couple} to handle case (ii) since it is seemingly inapplicable in the infinite-variance setting. Such limitations also appear in \cite{bhattacharjee2022large} (see Section 4.2). 
\item[2.] In Section \ref{sec:gconc}, we prove concentration of the conditional mean degree around the scaled weights. Note that we need not distinguish between models (i) and (ii) in this case.
\item[3.] In Section \ref{sec:tempproof}, we connect results developed in Sections \ref{sec:degconc} and  \ref{sec:gconc} to produce the tail empirical approximation for submodels (i) and (ii). We then use the coupling developed in Section \ref{sec:couple} to prove the tail empirical measure approximation for the submodel $|\mathcal{L}_1| < L, |\mathcal{L}_2| \leq L$, $\alpha > 2$ by relating it to (i).
\end{itemize}

\subsubsection{Concentration of the degrees}\label{sec:degconc}

Lemma \ref{lem:Dapprox} considers concentration of the degrees around their conditional means when $|\mathcal{L}_1| = L$. Define the conditional mean $\boldsymbol{g}_1(n) = \mathbb{E}[\mathbf{D}_1(n) \mid \mathbf{W}_{[n]}]$ and recall $\boldsymbol{\mathcal{W}}_1 = (c_1W_{11}, \dots, c_LW_{1L})$.

\begin{lemma}
\label{lem:Dapprox}
Suppose $|\mathcal{L}_1| = L$. Assume (C1)-(C3) hold. Let $\eta$ and $u$ be strictly positive real numbers. As $n \rightarrow \infty$
\begin{align}
\label{eq:Dapprox1}
&\frac{n}{k_n}  \mathbb{P}\left(\| \mathbf{D}_1(n) - \boldsymbol{g}_1(n)  \|_p  \geq \eta b(n/k_n), \| \boldsymbol{\mathcal{W}}_1 \|_p \leq ub(n/k_n)\right) \rightarrow 0, \\
\label{eq:Dapprox2}
&\frac{n}{k_n}  \mathbb{P}\left(\| \mathbf{D}_1(n) - \boldsymbol{g}_1(n)  \|_p  \geq \eta b(n/k_n), \| \boldsymbol{\mathcal{W}}_1 \|_p \geq ub(n/k_n)\right)\rightarrow 0.
\end{align}
\end{lemma}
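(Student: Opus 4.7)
The plan is to condition on $\mathbf{W}_{[n]}$, use Lemma \ref{lem:mnorm} with Markov's inequality to exploit conditional Poisson concentration of $\mathbf{D}_1(n)$ around $\boldsymbol{g}_1(n)$, and then integrate the resulting bound against the (regularly varying) marginal law of $\boldsymbol{\mathcal{W}}_1$ via Karamata's theorem. Since $|\mathcal{L}_1|=L$, conditional on $\mathbf{W}_{[n]}$ the components of $\mathbf{D}_1(n)$ are independent Poissons with mean vector $\boldsymbol{g}_1(n)$. Pick an integer $m > 2\alpha$. Markov's inequality and Lemma \ref{lem:mnorm} give, almost surely,
\begin{align*}
\mathbb{P}\bigl(\|\mathbf{D}_1(n) - \boldsymbol{g}_1(n)\|_p \geq \eta b(n/k_n) \,\big|\, \mathbf{W}_{[n]}\bigr) \leq \frac{L^{m/p+1}\bigl(a_m \|\boldsymbol{g}_1(n)\|_p^{m/2} + C_m\bigr)}{\eta^m b(n/k_n)^m}.
\end{align*}

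Next I would replace $\|\boldsymbol{g}_1(n)\|_p$ by a constant multiple of $\|\boldsymbol{\mathcal{W}}_1\|_p$ on a large enough event. From \eqref{eq:glin} and the elementary bound $\sum_{j=1}^n W_{jl}^{1+\nu}/T_l(n)^{1+\nu} \leq (W_{(1)l}(n)/T_l(n))^\nu$, on the event $E_n = \bigcap_l\{W_{1l}W_{(1)l}(n)/T_l(n) < \delta\}$ one obtains
\begin{align*}
g_{1,l}(n) \leq c_l W_{1l} + M W_{1l}^{1+\nu}(W_{(1)l}(n)/T_l(n))^\nu, \qquad l \in [L].
\end{align*}
Restricting further to the event $F_n$ on which $M W_{1l}^{\nu}(W_{(1)l}(n)/T_l(n))^\nu \leq c_l$ for every $l$ yields $\|\boldsymbol{g}_1(n)\|_p \leq 2\|\boldsymbol{\mathcal{W}}_1\|_p$. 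Both complements $E_n^c$ and $F_n^c$ are finite unions over $l$ of events of the form $\{W_{1l}W_{(1)l}(n)/T_l(n) > \mathrm{const}\}$, so Lemma \ref{lem:Wsmall} with $\beta = 1 \in (0,\alpha)$ yields $(n/k_n)\mathbb{P}(E_n^c \cup F_n^c) \to 0$; this handles the exceptional contribution to both \eqref{eq:Dapprox1} and \eqref{eq:Dapprox2}.

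On the good event $E_n \cap F_n$, both \eqref{eq:Dapprox1} and \eqref{eq:Dapprox2} reduce to showing
\begin{align*}
\frac{n/k_n}{b(n/k_n)^m} \,\mathbb{E}\bigl[\bigl(\|\boldsymbol{\mathcal{W}}_1\|_p^{m/2} + 1\bigr)\,1_A\bigr] \to 0,
\end{align*}
with $A = \{\|\boldsymbol{\mathcal{W}}_1\|_p \leq ub(n/k_n)\}$ for \eqref{eq:Dapprox1} and $A = \{\|\boldsymbol{\mathcal{W}}_1\|_p \geq ub(n/k_n)\}$ for \eqref{eq:Dapprox2}. By (C2), $\|\boldsymbol{\mathcal{W}}_1\|_p$ is regularly varying of index $\alpha$ with quantile scale $b(t)$, so Karamata's theorem applied at $q = m/2 > \alpha$ gives $\mathbb{E}[\|\boldsymbol{\mathcal{W}}_1\|_p^{m/2} 1_A] = O(b(n/k_n)^{m/2} \cdot k_n/n)$ in both the upper- and lower-truncated cases. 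The leading piece is therefore $O(b(n/k_n)^{-m/2}) \to 0$, and the constant-$C_m$ piece is even smaller since $m > \alpha$.

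The main obstacle is precisely this reconciliation step: Lemma \ref{lem:mnorm} controls the conditional deviation in terms of $\|\boldsymbol{g}_1(n)\|_p^{m/2}$, whereas the splitting event $A$ is phrased in terms of $\|\boldsymbol{\mathcal{W}}_1\|_p$, and the linearization of $g_l$ afforded by (C1) only kicks in once $W_{1l}W_{(1)l}(n)/T_l(n)$ is small. Condition (C3) is exactly what guarantees, via Lemma \ref{lem:Wsmall}, that the exceptional event where this linearization fails has probability $o(k_n/n)$; without (C3) the exceptional contribution would swamp the Karamata main term and the approximation would fail.
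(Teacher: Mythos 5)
Your overall strategy (condition on $\mathbf{W}_{[n]}$, apply Markov plus Lemma \ref{lem:mnorm}, replace $\|\boldsymbol{g}_1(n)\|_p$ by $2\|\boldsymbol{\mathcal{W}}_1\|_p$ on a good event controlled by Lemma \ref{lem:Wsmall}, then integrate via Karamata) is exactly the paper's, and your treatment of \eqref{eq:Dapprox1} and of the exceptional event is correct. However, there is a genuine gap in your treatment of \eqref{eq:Dapprox2}: you fix a single integer $m>2\alpha$ and claim that Karamata's theorem gives $\mathbb{E}\bigl[\|\boldsymbol{\mathcal{W}}_1\|_p^{m/2}\,1_A\bigr]=O\bigl(b(n/k_n)^{m/2}\,k_n/n\bigr)$ ``in both the upper- and lower-truncated cases.'' This is false in the lower-truncated case $A=\{\|\boldsymbol{\mathcal{W}}_1\|_p\ge ub(n/k_n)\}$: since $\|\boldsymbol{\mathcal{W}}_1\|_p$ has tail index $\alpha$, the variable $\|\boldsymbol{\mathcal{W}}_1\|_p^{m/2}$ has tail index $2\alpha/m<1$ when $m>2\alpha$, so
\begin{align*}
\mathbb{E}\bigl[\|\boldsymbol{\mathcal{W}}_1\|_p^{m/2}\,1_{\{\|\boldsymbol{\mathcal{W}}_1\|_p\ge ub(n/k_n)\}}\bigr]=\infty
\end{align*}
for every $n$, and your bound is vacuous. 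Karamata's theorem for tails truncated from below requires the exponent $m/2$ to be \emph{strictly less} than $\alpha$, which is incompatible with the choice $m>2\alpha$ needed for the upper-truncated case.

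This is precisely why the lemma is stated as two separate claims and why the paper uses different moments for each: a high moment $m>2\alpha$ on the event $\{\|\boldsymbol{\mathcal{W}}_1\|_p\le ub(n/k_n)\}$, but the \emph{second} moment on the event $\{\|\boldsymbol{\mathcal{W}}_1\|_p\ge ub(n/k_n)\}$. With $m=2$, Lemma \ref{lem:mnorm} bounds the conditional second moment by a constant times $\|\boldsymbol{g}_1(n)\|_p+1\le 2\|\boldsymbol{\mathcal{W}}_1\|_p+1$, and since $\alpha>1$ by (C2) the exponent $1<\alpha$ makes the lower-truncated Karamata estimate legitimate:
\begin{align*}
\frac{n}{k_n}\,\frac{1}{b(n/k_n)^2}\,\mathbb{E}\bigl[\|\boldsymbol{\mathcal{W}}_1\|_p\,1_{\{\|\boldsymbol{\mathcal{W}}_1\|_p\ge ub(n/k_n)\}}\bigr]\sim \mathrm{const}\cdot\frac{u^{1-\alpha}}{b(n/k_n)}\rightarrow 0.
\end{align*}
If you replace your uniform choice of $m$ by this case-dependent choice, the rest of your argument goes through.
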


\begin{proof}
Fix $\epsilon \in (0, \delta)$, where $\delta$ is as in \eqref{eq:glin}. Define the event
\begin{align*}
\mathcal{A}_n(\epsilon) = \left\lbrace \max_{l \in [L]}  W_{1l}\frac{W_{(1)l}(n)}{T_l(n)} < \epsilon \right\rbrace.
\end{align*}
Note that due to Lemma \ref{lem:Wsmall}, in suffices to prove that
\begin{align}
\label{eq:Dapprox12}
&\frac{n}{k_n}  \mathbb{P}\left(\| \mathbf{D}_1(n) - \boldsymbol{g}_1(n)  \|_p  \geq \eta b(n/k_n), \| \boldsymbol{\mathcal{W}}_1 \|_p \leq ub(n/k_n), \mathcal{A}_n(\epsilon) \right) \rightarrow 0, \\
\label{eq:Dapprox22}
&\frac{n}{k_n}  \mathbb{P}\left(\| \mathbf{D}_1(n) - \boldsymbol{g}_1(n)  \|_p  \geq \eta b(n/k_n), \| \boldsymbol{\mathcal{W}}_1 \|_p \geq ub(n/k_n), \mathcal{A}_n(\epsilon) \right)\rightarrow 0.
\end{align}
We start by proving \eqref{eq:Dapprox12}. Let $m$ be a positive integer such that $m > 2\alpha$. We may apply Markov's inequality and Lemma \ref{lem:mnorm} to achieve that
\begin{align*}
&\frac{n}{k_n}  \mathbb{P}\left(\| \mathbf{D}_1(n) - \boldsymbol{g}_1(n)  \|_p  \geq \eta b(n/k_n), \| \boldsymbol{\mathcal{W}}_1 \|_p \leq ub(n/k_n), \mathcal{A}_n(\epsilon) \right) \\
&\leq \left(\frac{1}{\eta b(n/k_n)}\right)^m \frac{n}{k_n} \mathbb{E}\left[\mathbb{E}\left[\| \mathbf{D}_1(n) - \boldsymbol{g}_1(n)  \|^m_p \mid \boldsymbol{\mathcal{W}}_{[n]} \right] 1_{\left\lbrace \| \boldsymbol{\mathcal{W}}_1 \|_p \leq ub(n/k_n), \mathcal{A}_n(\epsilon) \right\rbrace} \right] \\
&\leq \left(\frac{L^{\frac{1}{p} + \frac{1}{m}}}{\eta b(n/k_n)}\right)^m  \frac{n}{k_n} \mathbb{E}\left[\left(a_{m} \| \boldsymbol{g}_1(n) \|^{m/2}_{p}  + C_{m}\right)1_{\left\lbrace \| \boldsymbol{\mathcal{W}}_1 \|_p \leq ub(n/k_n), \mathcal{A}_n(\epsilon)  \right\rbrace} \right]. 
\end{align*}
Since $m > \alpha$, we may determine that as $n \rightarrow \infty$
\begin{align*}
\left(\frac{1}{b(n/k_n)}\right)^m\frac{n}{k_n}\mathbb{P}\left(  \| \boldsymbol{\mathcal{W}}_1 \|_p \leq ub(n/k_n) ,  \mathcal{A}_n(\epsilon) \right) \leq \left(\frac{1}{b(n/k_n)}\right)^m \frac{n}{k_n} \rightarrow 0.
\end{align*}
Hence in order to prove \eqref{eq:Dapprox12}, it suffices to show that 
\begin{align*}
H_1(n) \equiv \left(\frac{1}{b(n/k_n)}\right)^m \frac{n}{k_n} \mathbb{E}\left[\| \boldsymbol{g}_1(n)\|_{p}^{m/2}1_{\left\lbrace \| \boldsymbol{\mathcal{W}}_1 \|_p \leq ub(n/k_n), \mathcal{A}_n(\epsilon)  \right\rbrace} \right] \rightarrow 0, 
\end{align*}
as $n \rightarrow \infty$. Note that when $\max_{l \in [L]}  W_{1l}\frac{W_{(1)l}(n)}{T_l(n)} < \epsilon < \delta$,
\begin{align*}
\sum_{l = 1}^L \left(\sum_{j = 1}^n g_l\left( W_{1l}W_{jl}/T_l(n) \right) \right)^{p} \leq& \sum_{l = 1}^L\left(M W^{1 +\nu}_{1l}\sum_{j = 1}^n\frac{W^{1 +\nu}_{jl}}{T^{1 +\nu}_l(n)} + c_lW_{1l}  \right)^{p} \\
\leq& \sum_{l = 1}^L \left(M W^{1 +\nu}_{1l} \frac{W^\nu_{(1)l}(n)}{T^\nu_l(n)} + c_lW_{1l}  \right)^{p} \\
\leq& \sum_{l = 1}^L \left(M \epsilon^\nu W_{1l} + c_lW_{1l}  \right)^{p} \\
\leq& 2^{p} \|\boldsymbol{\mathcal{W}}_1 \|^p_{p},
\end{align*}
where, if necessary, we have taken $\epsilon$ small enough so that $M\epsilon^\nu < \min_{l \in [L]} c_l$. Hence $\| \boldsymbol{g}_1(n)\|_{p}^{m/2} \leq 2^{m/2} \|\boldsymbol{\mathcal{W}}_1 \|^{m/2}_{p}$ and we may write
\begin{align*}
H_1(n) \leq& \left( \frac{2^{1/2}}{b(n/k_n)}\right)^m \frac{n}{k_n} \mathbb{E}\left[\| \boldsymbol{\mathcal{W}}_1\|_{p}^{m/2}1_{\left\lbrace \| \boldsymbol{\mathcal{W}}_1 \|_p \leq ub(n/k_n)\right\rbrace} \right]  \\
\leq&  \left( \frac{2^{1/2}}{b(n/k_n)}\right)^m \frac{n}{k_n}  \int_0^{(ub(n/k_n))^{m/2}} \mathbb{P}\left( \| \boldsymbol{\mathcal{W}}_1 \|^{m/2}_{p} > t\right) dt,
\intertext{and recalling that $m > 2\alpha$ we may apply Karamata's Theorem \citep[Theorem 2.1]{resnick2007heavy} to achieve that}
\sim& \left( \frac{2^{1/2}}{b(n/k_n)}\right)^m \frac{n}{k_n}\left( 1 - \frac{2\alpha}{m}\right)^{-1} \left(ub(n/k_n)\right)^{m/2} \mathbb{P}\left( \| \boldsymbol{\mathcal{W}}_1 \|_{p} >ub(n/k_n) \right) \\
\sim& \left( \frac{2}{b(n/k_n)}\right)^{m/2} \left( 1 - \frac{2\alpha}{m}\right)^{-1} u^{\frac{m}{2} - \alpha} \rightarrow 0,
\end{align*}
as $n \rightarrow \infty$. Hence \eqref{eq:Dapprox12} and is proven and thus so is \eqref{eq:Dapprox1}. We now prove \eqref{eq:Dapprox2}, for which it suffices to prove \eqref{eq:Dapprox22}. By similar calculations as before
\begin{align*}
&\frac{n}{k_n} \mathbb{P}\left(\| \mathbf{D}_1(n) - \boldsymbol{g}_1(n)  \|_p  \geq \eta b(n/k_n), \| \boldsymbol{\mathcal{W}}_1 \|_p \geq ub(n/k_n), \mathcal{A}_n(\epsilon) \right) \\
&\leq \left(\frac{1}{\eta b(n/k_n)}\right)^2 \frac{n}{k_n} \mathbb{E}\left[\mathbb{E}\left[\| \mathbf{D}_1(n) - \boldsymbol{g}_1(n)  \|^2_p \mid \boldsymbol{\mathcal{W}}_{[n]} \right] 1_{\left\lbrace \| \boldsymbol{\mathcal{W}}_1 \|_p \geq ub(n/k_n), \mathcal{A}_n(\epsilon) \right\rbrace} \right] \\
&\leq  \left(\frac{L^{\frac{1}{p} + \frac{1}{2}}}{\eta b(n/k_n)}\right)^2 \frac{n}{k_n} \mathbb{E}\left[\left( a_{2} \| \boldsymbol{g}_1(n)\|_{p} + C_{2}\right)1_{\left\lbrace \| \boldsymbol{\mathcal{W}}_1 \|_p \geq ub(n/k_n), \mathcal{A}_n(\epsilon) \right\rbrace} \right]. 
\end{align*}
For the second summand, note that
\begin{align*}
\left(\frac{1}{b(n/k_n)}\right)^2 \frac{n}{k_n} \mathbb{P}&\left(\| \boldsymbol{\mathcal{W}}_1 \|_p \geq ub(n/k_n), \mathcal{A}_n(\epsilon)  \right) \\
&\leq \left(\frac{1}{b(n/k_n)}\right)^2 \frac{n}{k_n} \mathbb{P}\left(\| \boldsymbol{\mathcal{W}}_1 \|_p \geq ub(n/k_n) \right) \rightarrow 0 \cdot u^{-\alpha} = 0,
\end{align*}
as $n \rightarrow \infty$, by regular variation of $\| \boldsymbol{\mathcal{W}}_1 \|_p$. Hence in order to prove \eqref{eq:Dapprox22}, it suffices to show that 
\begin{align*}
H_2(n) \equiv \left(\frac{1}{b(n/k_n)}\right)^2 \frac{n}{k_n} \mathbb{E}\left[\| \boldsymbol{g}_1(n)\|_{p}1_{\left\lbrace \| \boldsymbol{\mathcal{W}}_1 \|_p \geq ub(n/k_n), \mathcal{A}_n(\epsilon)  \right\rbrace} \right] \rightarrow 0, 
\end{align*}
as $n \rightarrow \infty$. Recall that $\max_{l \in [L]}  W_{1l}\frac{W_{(1)l}(n)}{T_l(n)} < \epsilon < \delta$ implies that $\| \boldsymbol{g}_1(n)\|_{p} \leq 2 \|\boldsymbol{\mathcal{W}}_1 \|_{p}$ for $\epsilon$ small enough and thus
\begin{align*}
H_2(n) \leq& 2\left(\frac{1}{b(n/k_n)}\right)^2 \frac{n}{k_n} \mathbb{E}\left[\|\boldsymbol{\mathcal{W}}_1 \|_{p}1_{\left\lbrace \| \boldsymbol{\mathcal{W}}_1 \|_p \geq ub(n/k_n) \right\rbrace} \right], \\
=& 2\left(\frac{1}{b(n/k_n)}\right)^2 \frac{n}{k_n} \int_0^\infty \mathbb{P}\left( \|\boldsymbol{\mathcal{W}}_1 \|_{p}1_{\left\lbrace \| \boldsymbol{\mathcal{W}}_1 \|_p \geq ub(n/k_n)\right\rbrace} > t \right)  dt \\
=&2\left(\frac{1}{b(n/k_n)}\right)^2 \frac{n}{k_n} ub(n/k_n)\mathbb{P}\left( \| \boldsymbol{\mathcal{W}}_1 \|_p \geq ub(n/k_n) \right) \\
&+ 2\left(\frac{1}{b(n/k_n)}\right)^2 \frac{n}{k_n} \int_{ub(n/k_n)}^\infty \mathbb{P}\left( \|\boldsymbol{\mathcal{W}}_1 \|_{p}  > t \right) dt \\
\sim& 2\frac{1}{b(n/k_n)}u^{1 - \alpha} + 2\left(\frac{1}{b(n/k_n)}\right)^2  \frac{n}{k_n}(\alpha - 1)^{-1}ub(n/k_n)\mathbb{P}\left( \|\boldsymbol{\mathcal{W}}_1 \|_{p}  > ub(n/k_n) \right) \\
\sim& 2\frac{1}{b(n/k_n)}u^{1 - \alpha}  + 2\frac{1}{b(n/k_n)}(\alpha - 1)^{-1}u^{1 - \alpha},
\end{align*}
where we have again applied Karamata's Theorem since $\|\boldsymbol{\mathcal{W}}_1 \|_{p}$ has a tail index of $\alpha > 1$. Hence $H_2(n) \rightarrow 0$ as $n \rightarrow \infty$ and \eqref{eq:Dapprox22} is proven. Thus we have proved \eqref{eq:Dapprox2}.
\end{proof}

Lemma \ref{lem:Dapprox2} considers concentration of the degrees around their conditional means when $|\mathcal{L}_1| < L, |\mathcal{L}_2| \leq L$ and $\alpha \in (1, 2]$.

\begin{lemma}
\label{lem:Dapprox2}
Suppose $|\mathcal{L}_1| < L, |\mathcal{L}_2| \leq L$ and that (C1), (C3) hold. Assume (C2) holds with the restriction that $\alpha \in (1, 2]$. Then
\begin{align*}
\frac{n}{k_n}  \mathbb{P}\left(\| \mathbf{D}_1(n) - \boldsymbol{g}_1(n)  \|_p  \geq \eta b(n/k_n) \right) \rightarrow 0,
\end{align*}
as $n \rightarrow \infty$.
\end{lemma}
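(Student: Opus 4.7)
The strategy is to replicate the proof of Lemma \ref{lem:Dapprox} nearly line-for-line, the central adaptation being to extend Lemma \ref{lem:mnorm} from vectors of independent Poissons to mixed vectors whose components may be Poisson-binomial (for $l \in \mathcal{L}_2$). For such a component, set $\xi_j = A_{1jl} - p_j$ with $p_j = g_l(W_{1l}W_{jl}/T_l(n))$; conditional on $\mathbf{W}_{[n]}$ these are independent, centered, and bounded random variables with $\mathbb{E}[\xi_j^2 \mid \mathbf{W}_{[n]}] \leq p_j$ and $\mathbb{E}[|\xi_j|^m \mid \mathbf{W}_{[n]}] \leq p_j$ for every $m \geq 2$. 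Rosenthal's inequality applied to $\sum_j \xi_j$, together with the elementary fact that $g_{1l}(n) \leq 1 + g_{1l}(n)^{m/2}$ for $m \geq 2$, yields a Poisson-style bound
\begin{align*}
\mathbb{E}\!\left[|D_{1l}(n) - g_{1l}(n)|^m \mid \mathbf{W}_{[n]}\right] \leq a'_m\, g_{1l}(n)^{m/2} + C'_m,
\end{align*}
for constants $a'_m, C'_m$ depending only on $m$, where $g_{1l}(n)$ is the $l$-th coordinate of $\boldsymbol{g}_1(n)$. Combining Poisson and Poisson-binomial coordinates through the norm equivalence used in the statement of Lemma \ref{lem:mnorm} upgrades this to
\begin{align*}
\mathbb{E}\!\left[\|\mathbf{D}_1(n) - \boldsymbol{g}_1(n)\|_p^m \mid \mathbf{W}_{[n]}\right] \leq L^{m/p + 1}\!\left(a_m \|\boldsymbol{g}_1(n)\|_p^{m/2} + C_m\right)
\end{align*}
almost surely, with constants now depending on $m$ and $p$ but not on $n$.

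Once this moment bound is in hand, the rest of the argument mirrors the proof of Lemma \ref{lem:Dapprox}. I would introduce the event $\mathcal{A}_n(\epsilon) = \{\max_{l \in [L]} W_{1l}W_{(1)l}(n)/T_l(n) < \epsilon\}$; Lemma \ref{lem:Wsmall} gives $\frac{n}{k_n}\mathbb{P}(\mathcal{A}_n(\epsilon)^c) \to 0$, so it suffices to work on $\mathcal{A}_n(\epsilon)$. On this event, taking $\epsilon$ small enough that $M\epsilon^\nu < \min_l c_l$, condition (C1) together with \eqref{eq:glin} gives $\|\boldsymbol{g}_1(n)\|_p \leq 2\|\boldsymbol{\mathcal{W}}_1\|_p$, as established inside the proof of Lemma \ref{lem:Dapprox}. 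I would then split the probability $\frac{n}{k_n}\mathbb{P}(\|\mathbf{D}_1(n) - \boldsymbol{g}_1(n)\|_p \geq \eta b(n/k_n),\,\mathcal{A}_n(\epsilon))$ along the threshold $\{\|\boldsymbol{\mathcal{W}}_1\|_p \leq u b(n/k_n)\}$ versus its complement. On the small-weight piece, apply conditional Markov with any integer $m > 2\alpha$ (which exists since $\alpha \leq 2$; e.g.\ $m = \ceil{2\alpha} + 1$), use the moment bound above and the layerwise estimate $\|\boldsymbol{g}_1(n)\|_p \leq 2\|\boldsymbol{\mathcal{W}}_1\|_p$, then invoke Karamata's theorem on the regularly varying distribution of $\|\boldsymbol{\mathcal{W}}_1\|_p$ exactly as in \eqref{eq:Dapprox12} to obtain a bound of order $b(n/k_n)^{-m/2}$. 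On the large-weight piece, apply conditional Markov with $m = 2$ and again invoke Karamata, using $\alpha > 1$, as in \eqref{eq:Dapprox22}, to obtain a bound of order $u^{1-\alpha}/b(n/k_n)$. Both pieces vanish, so the conclusion follows upon sending $u$ to infinity after $n$.

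The main obstacle, and the only substantive departure from the Poisson case, is verifying the Rosenthal-style moment bound for Poisson-binomial summands in a form matching Lemma \ref{lem:mbound}; the additional $O(\mu)$ term produced by Rosenthal must be absorbed into the $\mu^{m/2}$ term, which is possible uniformly in $\mu \geq 0$ for any $m \geq 2$. Once Lemma \ref{lem:mnorm} has been lifted to the mixed conditional law, the two-region Markov-plus-Karamata scheme of Lemma \ref{lem:Dapprox} transfers without alteration. It is worth noting that the restriction $\alpha \in (1,2]$ plays no direct role in this argument; it is imposed because the complementary case $\alpha > 2$ will be reduced in Section \ref{sec:tempapprox} to case (i) via the Poissonization coupling of Section \ref{sec:couple}, obviating the need for a separate moment-based argument there.
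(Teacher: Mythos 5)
Your proof is correct, but it takes a genuinely different route from the paper's. The paper's proof of Lemma \ref{lem:Dapprox2} does not split on the magnitude of $\|\boldsymbol{\mathcal{W}}_1\|_p$ at all: after reducing to the event $\max_{l}W_{1l}W_{(1)l}(n)/T_l(n)<\delta$ via Lemma \ref{lem:Wsmall}, it applies a single global conditional Markov bound with a low moment order $m$ satisfying $\alpha<m<2\alpha$ --- namely $m=2$ when $\alpha\in(1,2)$ and $m=3$ when $\alpha=2$ --- so that $\mathbb{E}[W_{1l}^{m/2}]<\infty$ outright and $\tfrac{n}{k_n}b(n/k_n)^{-m}\to 0$, with no Karamata step on truncated moments. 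For the Bernoulli layers it bounds the conditional variance by $\sum_j g_l(\cdot)$ directly and, for $m=3$, proves the ad hoc Lemma \ref{lem:PB3} (a fourth-moment computation plus Cauchy--Schwarz) in place of your Rosenthal inequality. Your key step --- lifting Lemma \ref{lem:mnorm} to mixed Poisson/Poisson-binomial coordinates via Rosenthal --- is sound: with $|\xi_j|\le 1$ one indeed has $\mathbb{E}[|\xi_j|^m\mid\mathbf{W}_{[n]}]\le \mathbb{E}[\xi_j^2\mid\mathbf{W}_{[n]}]\le p_j$, Rosenthal gives $C(m)(\mu^{m/2}+\mu)$, and $\mu\le 1+\mu^{m/2}$ absorbs the linear term. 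This buys real generality: because your moment bound holds for arbitrary $m\ge 2$, the two-region scheme of Lemma \ref{lem:Dapprox} (or even a single global Markov bound with integer $m\in(\alpha,2\alpha)$) goes through for \emph{every} $\alpha>1$, so your argument would subsume the $\alpha>2$ mixed case that the paper instead handles via the Poissonization coupling of Section \ref{sec:couple} (Lemmas \ref{lem:coupletemp} and \ref{lem:coupletempfinal}); the restriction $\alpha\in(1,2]$ is, by contrast, genuinely load-bearing in the paper's own proof, since a global second- or third-moment bound forces $m>\alpha$ with $m\le 3$. The trade-off is that the paper's argument is more elementary, using only explicit low-order central moments. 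One minor imprecision: on your small-weight region both terms already vanish as $n\to\infty$ for each fixed $u$, and the two regions partition the event, so no limit $u\to\infty$ is needed.
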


\begin{proof}
Let $\delta$ be as in \eqref{eq:glin}. in From Lemma \ref{lem:Wsmall}, it suffies to show that
\begin{align*}
\frac{n}{k_n}  \mathbb{P}\left(\| \mathbf{D}_1(n) - \boldsymbol{g}_1(n)  \|_p  \geq \eta b(n/k_n), \max_{l \in [L]} W_{1l}\frac{W_{(1)l}(n)}{T_l(n)} < \delta \right) \rightarrow 0,
\end{align*}
as $n \rightarrow \infty$.
Suppose $\alpha \in (1, 2)$. By equivalence of $\ell_p$ norms, we have that
\begin{align*}
\frac{n}{k_n} & \mathbb{P}\left(\| \mathbf{D}_1(n) - \boldsymbol{g}_1(n)  \|_p  \geq \eta b(n/k_n), \max_{l \in [L]} W_{1l}\frac{W_{(1)l}(n)}{T_l(n)} < \delta \right) \\
\leq& \frac{n}{k_n}  \mathbb{P}\left(\| \mathbf{D}_1(n) - \boldsymbol{g}_1(n)  \|_2  \geq \eta b(n/k_n)/L^{1/p}, \max_{l \in [L]} W_{1l}\frac{W_{(1)l}(n)}{T_l(n)} < \delta\right) \\
=&\frac{n}{k_n} \mathbb{E}\left[ \mathbb{P}\left(\| \mathbf{D}_1(n) - \boldsymbol{g}_1(n)  \|_2  \geq \eta b(n/k_n)/L^{1/p} \  \bigg\vert \ \boldsymbol{\mathcal{W}}_{[n]} \right)1_{\left\lbrace  \max_{l \in [L]} W_{1l}\frac{W_{(1)l}(n)}{T_l(n)} < \delta \right\rbrace}\right] \\
\leq& \frac{n}{k_n} \left(\frac{L^{1/p}}{\eta b(n/k_n)} \right)^2 \mathbb{E}\left[ \mathbb{E}\left[\| \mathbf{D}_1(n) - \boldsymbol{g}_1(n)  \|^2_2 \mid \boldsymbol{\mathcal{W}}_{[n]} \right]1_{\left\lbrace  \max_{l \in [L]} W_{1l}\frac{W_{(1)l}(n)}{T_l(n)} < \delta \right\rbrace} \right].
\end{align*}
Note that for $l \in \mathcal{L}_1$
\begin{align*}
\mathbb{E}\left[ \left(D_{1l}(n) - \sum_{j = 1}^n g_l(W_{1l}W_{jl}/T_l(n)) \right)^2  \  \Bigg\vert \ \boldsymbol{\mathcal{W}}_{[n]} \right] = \sum_{j = 1}^n g_l(W_{1l}W_{jl}/T_l(n)),
\end{align*}
and for $l \in \mathcal{L}_2$
\begin{align*}
\mathbb{E}&\left[ \left(D_{1l}(n) - \sum_{j = 1}^n g_l(W_{1l}W_{jl}/T_l(n)) \right)^2  \  \Bigg\vert \ \boldsymbol{\mathcal{W}}_{[n]} \right] \\
=& \sum_{j = 1}^n g_l(W_{1l}W_{jl}/T_l(n))(1 - g_l(W_{1l}W_{jl}/T_l(n))) \\
\leq& \sum_{j = 1}^n g_l(W_{1l}W_{jl}/T_l(n)).
\end{align*}
Additionally, when $\max_{l \in [L]} W_{1l}\frac{W_{(1)l}(n)}{T_l(n)} < \delta$, 
\begin{align*}
\sum_{j = 1}^n g_l(W_{1l}W_{jl}/T_l(n)) \leq& c_l W_{1l} + M \sum_{j = 1}^n \left( \frac{W_{1l}W_{jl}}{T_l(n)} \right)^{1 + \nu} \\
\leq& c_l W_{1l} + M W^{1 + \nu}_{1l} \left( \frac{W_{(1)l}(n)}{T_l(n)} \right)^{\nu} \\
\leq& \left(c_l + M \delta^\nu \right)W_{1l}.
\end{align*}
Hence
\begin{align*}
\frac{n}{k_n} & \mathbb{P}\left(\| \mathbf{D}_1(n) - \boldsymbol{g}_1(n)  \|_p  \geq \eta b(n/k_n), \max_{l \in [L]} W_{1l}\frac{W_{(1)l}(n)}{T_l(n)} < \delta \right) \\
\leq& \frac{n}{k_n} \left(\frac{L^{1/p}}{\eta b(n/k_n)} \right)^2 \mathbb{E}\left[ \mathbb{E}\left[\| \mathbf{D}_1(n) - \boldsymbol{g}_1(n)  \|^2_2 \mid \boldsymbol{\mathcal{W}}_{[n]} \right]1_{\left\lbrace  \max_{l \in [L]} W_{1l}\frac{W_{(1)l}(n)}{T_l(n)} < \delta \right\rbrace} \right] \\
\leq& \frac{n}{k_n} \left(\frac{L^{1/p}}{\eta b(n/k_n)} \right)^2 \sum_{l = 1}^L\mathbb{E}\left[  \left(c_l + M \delta^\nu \right)W_{1l} \right].
\end{align*}
Since $-\alpha < -1$, $\mathbb{E}[W_{1l}] < \infty$ and since $b(t)$ is regularly varying with index $1/\alpha$, we have that  
\begin{align*}
\frac{n}{k_n} & \mathbb{P}\left(\| \mathbf{D}_1(n) - \boldsymbol{g}_1(n)  \|_p  \geq \eta b(n/k_n), \max_{l \in [L]} W_{1l}\frac{W_{(1)l}(n)}{T_l(n)} < \delta \right) \rightarrow 0 \qquad \text{as } n \rightarrow \infty.
\end{align*}
Now suppose $\alpha = 2$. Using similar steps we find that
\begin{align*}
\frac{n}{k_n} & \mathbb{P}\left(\| \mathbf{D}_1(n) - \boldsymbol{g}_1(n)  \|_p  \geq \eta b(n/k_n), \max_{l \in [L]} W_{1l}\frac{W_{(1)l}(n)}{T_l(n)} < \delta \right) \\
\leq& \frac{n}{k_n} \left(\frac{L^{1/p}}{\eta b(n/k_n)} \right)^3 \mathbb{E}\left[ \mathbb{E}\left[\| \mathbf{D}_1(n) - \boldsymbol{g}_1(n)  \|^3_3 \mid \boldsymbol{\mathcal{W}}_{[n]} \right]1_{\left\lbrace  \max_{l \in [L]} W_{1l}\frac{W_{(1)l}(n)}{T_l(n)} < \delta \right\rbrace} \right]
\end{align*}
Note that for $l \in \mathcal{L}_1$, by Lemma \ref{lem:mbound}
\begin{align*}
\mathbb{E}\left[ \left| D_{1l}(n) - \sum_{j = 1}^n g_l(W_{1l}W_{jl}/T_l(n)) \right|^3 \ \Bigg\vert \ \boldsymbol{\mathcal{W}}_{[n]} \right] \leq a_3 \left( \sum_{j = 1}^n g_l(W_{1l}W_{jl}/T_l(n))\right)^{3/2} + C_3.
\end{align*}
Applying a bound for the absolute third central moment of Poisson binomial random variables developed in Lemma \ref{lem:PB3}, we further find that for $l \in \mathcal{L}_2$
\begin{align*}
\mathbb{E}\left[ \left| D_{1l}(n) - \sum_{j = 1}^n g_l(W_{1l}W_{jl}/T_l(n)) \right|^3 \ \Bigg\vert \ \boldsymbol{\mathcal{W}}_{[n]} \right] \leq&  2\sum_{j = 1}^n g(W_{1l}W_{jl}/T_l(n)) \\
&+ 2\left(\sum_{j = 1}^n g(W_{1l}W_{jl}/T_l(n))\right)^{3/2}.
\end{align*}
When $\max_{l \in [L]} W_{1l}\frac{W_{(1)l}(n)}{T_l(n)} < \delta$, 
\begin{align*}
\sum_{j = 1}^n g_l(W_{1l}W_{jl}/T_l(n))\leq \left(c_l + M \delta^\nu \right)W_{1l}, 
\end{align*}
and hence
\begin{align*}
\frac{n}{k_n} & \mathbb{P}\left(\| \mathbf{D}_1(n) - \boldsymbol{g}_1(n)  \|_p  \geq \eta b(n/k_n), \max_{l \in [L]} W_{1l}\frac{W_{(1)l}(n)}{T_l(n)} < \delta \right) \\
\leq& \frac{n}{k_n} \left(\frac{L^{1/p}}{\eta b(n/k_n)} \right)^3 \mathbb{E}\left[ \mathbb{E}\left[\| \mathbf{D}_1(n) - \boldsymbol{g}_1(n)  \|^3_3 \mid \boldsymbol{\mathcal{W}}_{[n]} \right]1_{\left\lbrace  \max_{l \in [L]} W_{1l}\frac{W_{(1)l}(n)}{T_l(n)} < \delta \right\rbrace} \right] \\
\leq& \frac{n}{k_n} \left(\frac{L^{1/p}}{\eta b(n/k_n)} \right)^3 \sum_{l \in \mathcal{L}_1}\left(a_3  \left(c_l + M \delta^\nu \right)^{3/2}\mathbb{E}\left[ W^{3/2}_{1l}\right] + C_3 \right) \\
&+ 2\frac{n}{k_n} \left(\frac{L^{1/p}}{\eta b(n/k_n)} \right)^3 \sum_{l \in \mathcal{L}_2}\left(\left(c_l + M \delta^\nu \right)\mathbb{E}\left[W_{1l}\right] + \left(c_l + M \delta^\nu \right)^{3/2}\mathbb{E}\left[ W^{3/2}_{1l}\right]  \right).
\end{align*}
Since $\alpha = 2$, $\mathbb{E}\left[ W_{1l}\right], \mathbb{E}\left[ W^{3/2}_{1l}\right] < \infty$. Additionally, since $b(t)$ is regularly varying with index $1/2$, we have that  
\begin{align*}
\frac{n}{k_n} \mathbb{P}\left(\| \mathbf{D}_1(n) - \boldsymbol{g}_1(n)  \|_p  \geq \eta b(n/k_n), \max_{l \in [L]} W_{1l}\frac{W_{(1)l}(n)}{T_l(n)} < \delta \right) \rightarrow 0, \qquad \text{as }n \rightarrow \infty.
\end{align*}
\end{proof}

\begin{lemma}
\label{lem:PB3}
Fix $n \in \mathbb{N}$. Suppose $X_1, X_2, \dots, X_n$ are independent Bernoulli random variables with success probabilities $p_1, p_2, \dots, p_n$, respectively. Then 
\begin{align}
\label{eq:PB3}
\mathbb{E}\left[\left|\sum_{i = 1}^n (X_i - p_i) \right|^3\right] \leq 2\sum_{i = 1}^n p_i + 2\left(\sum_{i = 1}^n p_i\right)^{3/2}.
\end{align}
\end{lemma}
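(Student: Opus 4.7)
The plan is to reduce the third absolute moment of $S := \sum_{i=1}^n (X_i - p_i)$ to its second and fourth moments via the Cauchy--Schwarz inequality, and then exploit the boundedness $|X_i - p_i| \le 1$ together with independence to bound each of these in a clean way. Write $\mu := \sum_{i=1}^n p_i$ throughout. The Cauchy--Schwarz step gives
\begin{align*}
\mathbb{E}\left[|S|^3\right] = \mathbb{E}\left[|S|\cdot S^2\right] \le \sqrt{\mathbb{E}[S^2]\,\mathbb{E}[S^4]},
\end{align*}
so it suffices to bound the two factors on the right.

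For the variance, independence immediately yields $\mathbb{E}[S^2] = \sum_{i=1}^n p_i(1-p_i) \le \mu$. For the fourth moment, expanding $S^4$ and using independence and $\mathbb{E}[X_i - p_i] = 0$, the only nonvanishing contributions come from terms in which every index appears at least twice, so
\begin{align*}
\mathbb{E}[S^4] = \sum_{i=1}^n \mathbb{E}[(X_i-p_i)^4] + 3\sum_{i \ne j} \mathbb{E}[(X_i-p_i)^2]\,\mathbb{E}[(X_j-p_j)^2].
\end{align*}
Since $|X_i - p_i| \le 1$, the fourth moment is controlled by the second: $\mathbb{E}[(X_i-p_i)^4] \le \mathbb{E}[(X_i-p_i)^2] = p_i(1-p_i) \le p_i$. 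Thus the first sum is at most $\mu$ and the second sum is at most $3\left(\sum_i p_i(1-p_i)\right)^2 \le 3\mu^2$, giving $\mathbb{E}[S^4] \le \mu + 3\mu^2$.

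Combining these yields $\mathbb{E}[|S|^3] \le \sqrt{\mu(\mu + 3\mu^2)} = \sqrt{\mu^2 + 3\mu^3}$, and applying the elementary inequality $\sqrt{a+b} \le \sqrt{a}+\sqrt{b}$ for $a,b \ge 0$ gives $\mathbb{E}[|S|^3] \le \mu + \sqrt{3}\,\mu^{3/2} \le 2\mu + 2\mu^{3/2}$, which is \eqref{eq:PB3}. There is no real obstacle here; the main pitfall is bookkeeping the fourth-moment expansion, but once one observes that independence kills all mixed terms involving a single factor of $Y_i = X_i - p_i$, only the diagonal and pair contributions remain, and boundedness takes care of the rest.
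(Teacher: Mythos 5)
Your proof is correct and follows essentially the same route as the paper's: Cauchy--Schwarz to reduce $\mathbb{E}[|S|^3]$ to $\sqrt{\mathbb{E}[S^2]\,\mathbb{E}[S^4]}$, the bound $\mathbb{E}[S^4]\le \mu+3\mu^2$, and an elementary inequality to finish. The only (immaterial) differences are that the paper computes the Bernoulli fourth moment exactly where you invoke $|X_i-p_i|\le 1$, and it closes with $\sqrt{\mu^2+3\mu^3}\le 2\max\{\mu,\mu^{3/2}\}$ rather than $\sqrt{a+b}\le\sqrt{a}+\sqrt{b}$.
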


\begin{proof}
A tedious calculation gives that
\begin{align*}
\mathbb{E}\left[\left(\sum_{i = 1}^n (X_i - p_i) \right)^4\right] =& \sum_{i = 1}^n p_i(1 - p_i)\left((1 - 2p_i)^2 - 2p_i(1 - p_i) \right) + 3\left( \sum_{i = 1}^np_i(1 - p_i) \right)^2 \\
\leq& \sum_{i = 1}^n p_i + 3 \left(\sum_{i = 1}^n p_i\right)^2.
\end{align*}
Hence by Cauchy-Bunyakovsky-Schwarz,
\begin{align*}
\mathbb{E}\left[\left|\sum_{i = 1}^n (X_i - p_i) \right|^3\right] \leq& \left( \mathbb{E}\left[\left(\sum_{i = 1}^n (X_i - p_i) \right)^4\right]\mathbb{E}\left[\left(\sum_{i = 1}^n (X_i - p_i) \right)^2\right]\right)^{1/2}, \\
\intertext{and using the fact that $\mathbb{E}\left[\left(\sum_{i = 1}^n (X_i - p_i) \right)^2\right] = \sum_{i = 1}^n p_i(1-p_i) \leq \sum_{i = 1}^n p_i$, }
\leq& \left( \left(\sum_{i = 1}^n p_i + 3 \left(\sum_{i = 1}^n p_i\right)^2\right) \sum_{i = 1}^n p_i \right)^{1/2} \\
=& \left( \left(\sum_{i = 1}^n p_i\right)^2 + 3 \left(\sum_{i = 1}^n p_i\right)^3\right)^{1/2} \\
\leq& 2\max\left\lbrace \sum_{i = 1}^n p_i, \left(\sum_{i = 1}^n p_i\right)^{3/2} \right\rbrace \\
\leq& 2\sum_{i = 1}^n p_i + 2\left(\sum_{i = 1}^n p_i\right)^{3/2}.
\end{align*}
\end{proof}

\subsubsection{Concentration of the conditional mean}\label{sec:gconc}

Lemma \ref{lem:gapprox} considers concentration of the conditional mean around the scaled weights. 

\begin{lemma}
\label{lem:gapprox}
Suppose (C1)-(C3) hold. For any $\eta > 0,$ and $p \in \mathbb{N}$
\begin{align}
\label{eq:gapprox}
&\frac{n}{k_n} \mathbb{P}\left(\|\boldsymbol{g}_1(n) - \boldsymbol{\mathcal{W}}_1 \|_p > \eta b(n/k_n) \right) \rightarrow 0, 
\end{align}
as $n \rightarrow \infty$.
\end{lemma}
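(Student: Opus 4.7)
The plan is to split the probability into a ``good'' event on which the weight-to-total-weight ratios are uniformly small and its complement. On the good event we use (C1) to control $|g_l(W_{1l}W_{jl}/T_l(n)) - c_l W_{1l}W_{jl}/T_l(n)|$ pointwise and collapse the layer-wise sum; on the complement we appeal to Lemma \ref{lem:Wsmall} (with $\beta = 1$, which is allowed because (C2) requires $\alpha > 1$). Since Lemma \ref{lem:Wsmall} already gives us $(n/k_n)\mathbb{P}(\mathcal{A}_n(\epsilon)^c) \to 0$ for $\mathcal{A}_n(\epsilon) = \{\max_l W_{1l} W_{(1)l}(n)/T_l(n) < \epsilon\}$ via a union bound over $l \in [L]$, the real work is restricted to the good event.

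On $\mathcal{A}_n(\epsilon)$ with $\epsilon < \delta$, inequality \eqref{eq:glin} yields, for each $l \in [L]$,
\begin{align*}
\left|\sum_{j=1}^n g_l(W_{1l}W_{jl}/T_l(n)) - c_l W_{1l}\right|
&\leq M \sum_{j=1}^n \left(\frac{W_{1l}W_{jl}}{T_l(n)}\right)^{1+\nu}
\leq M W_{1l}^{1+\nu} \left(\frac{W_{(1)l}(n)}{T_l(n)}\right)^{\nu},
\end{align*}
using $\sum_j W_{jl}^{1+\nu} \leq W_{(1)l}(n)^\nu T_l(n)$. Rewriting the right-hand side as $M W_{1l} \cdot (W_{1l} W_{(1)l}(n)/T_l(n))^\nu$ and invoking $\mathcal{A}_n(\epsilon)$ bounds it by $M \epsilon^\nu W_{1l}$. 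Taking a max over $l$ and using equivalence of $\ell_p$ norms on $\mathbb{R}^L$ gives the uniform bound
\begin{align*}
\|\boldsymbol{g}_1(n) - \boldsymbol{\mathcal{W}}_1\|_p \leq L^{1/p} M \epsilon^\nu \max_{l \in [L]} W_{1l} \leq L^{1/p} M \epsilon^\nu \|\mathbf{W}_1\|_p
\end{align*}
on $\mathcal{A}_n(\epsilon)$.

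Combining the two pieces, and using that $\|\mathbf{W}_1\|_p$ is univariate regularly varying with index $\alpha$ (since $\boldsymbol{\mathcal{W}}_1$ is multivariate regularly varying with the same index, and the $c_l$ are bounded constants), we get
\begin{align*}
\frac{n}{k_n}\mathbb{P}\left(\|\boldsymbol{g}_1(n) - \boldsymbol{\mathcal{W}}_1\|_p > \eta b(n/k_n)\right)
&\leq \frac{n}{k_n}\mathbb{P}\left(\|\mathbf{W}_1\|_p > \frac{\eta\, b(n/k_n)}{L^{1/p} M \epsilon^\nu}\right) + \frac{n}{k_n}\mathbb{P}(\mathcal{A}_n(\epsilon)^c).
\end{align*}
The second term vanishes by Lemma \ref{lem:Wsmall}, and the first converges to a constant multiple of $(\eta/(L^{1/p} M \epsilon^\nu))^{-\alpha}$ by regular variation of $\|\mathbf{W}_1\|_p$ together with the defining property of $b(\cdot)$. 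Letting $\epsilon \downarrow 0$ drives this limiting constant to zero, completing the argument.

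The proof is almost entirely routine once the right bound is written down; the only point that requires a bit of care is seeing that the crucial factor $W_{1l}^{1+\nu}(W_{(1)l}(n)/T_l(n))^\nu$ can be recast as $W_{1l}$ times something small uniformly on $\mathcal{A}_n(\epsilon)$, so that one ends up comparing $\|\boldsymbol{g}_1(n) - \boldsymbol{\mathcal{W}}_1\|_p$ to $\|\mathbf{W}_1\|_p$ itself (whose tail is already under control). No analogue of Lemma \ref{lem:mnorm} or the Poisson-concentration machinery used in Lemma \ref{lem:Dapprox} is needed here, since $\boldsymbol{g}_1(n)$ is a measurable function of the weights.
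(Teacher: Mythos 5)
Your proposal is correct and follows essentially the same route as the paper's proof: the same good event $\max_{l}W_{1l}W_{(1)l}(n)/T_l(n)<\epsilon$, the same bound $\lvert\sum_j g_l(W_{1l}W_{jl}/T_l(n))-c_lW_{1l}\rvert\le M\epsilon^{\nu}W_{1l}$ from \eqref{eq:glin}, the same appeal to Lemma \ref{lem:Wsmall} for the complement, and the same final step of letting $\epsilon\downarrow 0$ after the regular-variation limit. The only cosmetic difference is that you compare to $\|\mathbf{W}_1\|_p$ via norm equivalence while the paper absorbs the constants $c_l$ to compare directly to $\|\boldsymbol{\mathcal{W}}_1\|_p$; both are fine.
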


\begin{proof}
Fix $\epsilon \in (0, \delta)$, where $\delta$ is as in \eqref{eq:glin}. If $\max_{l \in [L]}  W_{1l}\frac{W_{(1)l}(n)}{T_l(n)} < \epsilon < \delta$, we find that
\begin{align*}
\|\boldsymbol{g}_1(n) - \boldsymbol{\mathcal{W}}_1 \|_p \leq& \left( M^p \sum_{l = 1}^L \left( W^{1 + \nu}_{1l} \sum_{j = 1}^n \frac{W^{1 + \nu}_{jl}}{T^{1 + \nu}_l(n)} \right)^p   \right)^{\frac{1}{p}} \\
\leq& M \left( \sum_{l = 1}^L \left( W^{1 + \nu}_{1l} \frac{ W^{\nu}_{(1)l}(n)}{T^{\nu}_l(n)} \right)^p   \right)^{\frac{1}{p}} \\
\leq& \epsilon^\nu M \left( \sum_{l = 1}^L  W^p_{1l}  \right)^{\frac{1}{p}} \\ 
\leq& \frac{\epsilon^\nu M}{c_\text{min}^p} \left( \sum_{l = 1}^L  c_l^p W^p_{1l}  \right)^{\frac{1}{p}} \\
=& \frac{\epsilon^\nu M}{c_\text{min}^p}\|\boldsymbol{\mathcal{W}}_1 \|_p,
\end{align*}
where we define $c_\text{min} = \min \{c_1, \dots, c_L \}$. Hence
\begin{align*}
\frac{n}{k_n} \mathbb{P}&\left(\|\boldsymbol{g}_1(n) - \boldsymbol{\mathcal{W}}_1 \|_p > \eta b(n/k_n) \right) \\
\leq& \frac{n}{k_n} \mathbb{P}\left(\|\boldsymbol{g}_1(n) - \boldsymbol{\mathcal{W}}_1 \|_p > \eta b(n/k_n), \max_{l \in [L]}  W_{1l}\frac{W_{(1)l}(n)}{T_l(n)} < \epsilon \right) \\
&+ \frac{n}{k_n} \mathbb{P}\left(\max_{l \in [L]}  W_{1l}\frac{W_{(1)l}(n)}{T_l(n)} > \epsilon \right) \\
\leq& \frac{n}{k_n} \mathbb{P}\left(\|\boldsymbol{\mathcal{W}}_1 \|_p > \frac{\eta c_\text{min}^p}{\epsilon^\nu M} b(n/k_n) \right) + \frac{n}{k_n} \sum_{l = 1}^L\mathbb{P}\left(W_{1l}\frac{W_{(1)l}(n)}{T_l(n)} > \epsilon \right) \\
\equiv& C_1(n) + C_2(n).
\end{align*}
From Lemma \ref{lem:Wsmall} we have that $C_2(n) \rightarrow 0$ as $n \rightarrow \infty$. Note that due to regular variation of $\boldsymbol{\mathcal{W}}_1$ (and hence $\|\boldsymbol{\mathcal{W}}_1\|_p$), 
\begin{align*}
C_1(n) = \frac{n}{k_n} \mathbb{P}\left(\|\boldsymbol{\mathcal{W}}_1 \|_p > \frac{\eta c^p_\text{min}}{\epsilon^\nu M} b(n/k_n) \right) \rightarrow \left(\frac{\epsilon^\nu M}{\eta c^p_\text{min}}\right)^{\alpha}, \quad \text{as } n \rightarrow \infty.  
\end{align*}
Since $\epsilon > 0$ was arbitrary, we may conclude that $C_1(n) \rightarrow 0$ as $n \rightarrow \infty$. 
\end{proof}

\subsubsection{Proof of tail empirical measure approximation}\label{sec:tempproof}

This subsection is devoted to the proof of Theorem \ref{thm:weakconvD}. In order to prove Theorem \ref{thm:weakconvD}, we require the following lemma.

\begin{lemma}
\label{lem:tailempapprox}
Suppose (C1)-(C3) hold. Then as $n \rightarrow \infty$
\begin{align}
\label{eq:lemtailemp1}
&\frac{n}{k_n} \mathbb{P}\left( \| \mathbf{D}_1(n) \|_p > yb(n/k_n),  \| \boldsymbol{\mathcal{W}}_1 \|_p \leq yb(n/k_n) \right) \rightarrow 0, \\
\label{eq:lemtailemp2}
&\frac{n}{k_n} \mathbb{P}\left( \| \boldsymbol{\mathcal{W}}_1 \|_p > yb(n/k_n),  \| \mathbf{D}_1(n) \|_p \leq yb(n/k_n) \right) \rightarrow 0.
\end{align}
\end{lemma}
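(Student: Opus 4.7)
The plan is to prove both statements by a triangle-inequality splitting that reduces each crossing event to three pieces: a narrow annulus of $\|\boldsymbol{\mathcal{W}}_1\|_p$ near the threshold $yb(n/k_n)$, a deviation of $\mathbf{D}_1(n)$ from its conditional mean $\boldsymbol{g}_1(n)$, and a deviation of $\boldsymbol{g}_1(n)$ from $\boldsymbol{\mathcal{W}}_1$. The first piece is controlled by multivariate regular variation of $\|\boldsymbol{\mathcal{W}}_1\|_p$ and is made arbitrarily small by shrinking the annulus; the latter two are dispatched by Lemmas \ref{lem:Dapprox}--\ref{lem:Dapprox2} and Lemma \ref{lem:gapprox}, respectively.

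Concretely, for \eqref{eq:lemtailemp1}, fix $\eta \in (0, y/2)$ and observe that
\begin{align*}
\bigl\{\|\mathbf{D}_1(n)\|_p > yb(n/k_n),\ \|\boldsymbol{\mathcal{W}}_1\|_p \leq yb(n/k_n)\bigr\} \subset F_1(n) \cup F_2(n) \cup F_3(n),
\end{align*}
where
\begin{align*}
F_1(n) &= \bigl\{(y-2\eta)b(n/k_n) < \|\boldsymbol{\mathcal{W}}_1\|_p \leq yb(n/k_n)\bigr\}, \\
F_2(n) &= \bigl\{\|\mathbf{D}_1(n)-\boldsymbol{g}_1(n)\|_p > \eta b(n/k_n),\ \|\boldsymbol{\mathcal{W}}_1\|_p \leq (y-2\eta)b(n/k_n)\bigr\}, \\
F_3(n) &= \bigl\{\|\boldsymbol{g}_1(n)-\boldsymbol{\mathcal{W}}_1\|_p > \eta b(n/k_n)\bigr\}.
\end{align*}
Indeed, on the complement of $F_1 \cup F_3$ one has $\|\boldsymbol{\mathcal{W}}_1\|_p \leq (y-2\eta)b(n/k_n)$ and $\|\boldsymbol{g}_1-\boldsymbol{\mathcal{W}}_1\|_p \leq \eta b(n/k_n)$, so the hypothesis $\|\mathbf{D}_1\|_p > yb(n/k_n)$ forces $\|\mathbf{D}_1-\boldsymbol{g}_1\|_p > \eta b(n/k_n)$ by the triangle inequality, placing the event in $F_2$. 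By regular variation of $\|\boldsymbol{\mathcal{W}}_1\|_p$,
\begin{align*}
\limsup_{n\to\infty}\tfrac{n}{k_n}\mathbb{P}(F_1(n)) = (y-2\eta)^{-\alpha} - y^{-\alpha},
\end{align*}
which vanishes as $\eta \downarrow 0$. The term $\tfrac{n}{k_n}\mathbb{P}(F_2(n))$ tends to zero by Lemma \ref{lem:Dapprox}\,\eqref{eq:Dapprox1} (with $u = y-2\eta$) when $|\mathcal{L}_1|=L$, and by the unconditional Lemma \ref{lem:Dapprox2} when $|\mathcal{L}_1|<L$ and $\alpha\in(1,2]$; the term $\tfrac{n}{k_n}\mathbb{P}(F_3(n))$ tends to zero directly by Lemma \ref{lem:gapprox}. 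Sending $n\to\infty$ first and then $\eta \downarrow 0$ yields \eqref{eq:lemtailemp1}.

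For \eqref{eq:lemtailemp2}, the mirror-image decomposition uses the thin shell $\{yb(n/k_n) < \|\boldsymbol{\mathcal{W}}_1\|_p \leq (y+2\eta)b(n/k_n)\}$, together with the event $\{\|\boldsymbol{\mathcal{W}}_1\|_p > (y+2\eta)b(n/k_n),\ \|\mathbf{D}_1\|_p \leq yb(n/k_n)\}$, on which $\|\mathbf{D}_1-\boldsymbol{\mathcal{W}}_1\|_p > 2\eta b(n/k_n)$ and which therefore splits via the triangle inequality into a degree-deviation event controlled by Lemma \ref{lem:Dapprox}\,\eqref{eq:Dapprox2} (or Lemma \ref{lem:Dapprox2}) and a conditional-mean-deviation event controlled by Lemma \ref{lem:gapprox}. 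The main bookkeeping obstacle is matching the $\|\boldsymbol{\mathcal{W}}_1\|_p$-regime in each $F_2$-type event to the correct conditional half of Lemma \ref{lem:Dapprox}: the \emph{small}-$\|\boldsymbol{\mathcal{W}}_1\|_p$ event in the decomposition for \eqref{eq:lemtailemp1} invokes \eqref{eq:Dapprox1}, whereas the \emph{large}-$\|\boldsymbol{\mathcal{W}}_1\|_p$ event in the decomposition for \eqref{eq:lemtailemp2} invokes \eqref{eq:Dapprox2}. Once this pairing is arranged, the result follows entirely from the three already-proved lemmas plus regular variation of $\|\boldsymbol{\mathcal{W}}_1\|_p$.
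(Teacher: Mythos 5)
Your decomposition into the thin annulus $F_1$, the degree-versus-conditional-mean deviation $F_2$, and the conditional-mean-versus-weight deviation $F_3$ is exactly the paper's argument for its Lemma \ref{lem:tailempapproxsub}, and the triangle-inequality bookkeeping (including the pairing of the small-$\|\boldsymbol{\mathcal{W}}_1\|_p$ regime with \eqref{eq:Dapprox1} and the large-$\|\boldsymbol{\mathcal{W}}_1\|_p$ regime with \eqref{eq:Dapprox2}) is correct as far as it goes. However, there is a genuine gap: Lemma \ref{lem:tailempapprox} is stated under (C1)--(C3) alone, i.e.\ for any $\alpha>1$ and any mix of layers, but your control of $F_2$ only covers the sub-cases $|\mathcal{L}_1|=L$ (via Lemma \ref{lem:Dapprox}) and $|\mathcal{L}_1|<L$ with $\alpha\in(1,2]$ (via Lemma \ref{lem:Dapprox2}). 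The remaining case --- at least one Bernoulli layer and $\alpha>2$ --- is not addressed, and neither cited lemma applies there: Lemma \ref{lem:Dapprox} requires all layers to be Poisson because its small-weight bound uses conditional moments of order $m>2\alpha$ via Lemma \ref{lem:mnorm}, which is a Poisson moment bound, while Lemma \ref{lem:Dapprox2}'s second/third-moment Markov bounds produce a factor $\tfrac{n}{k_n}b(n/k_n)^{-2}\sim (n/k_n)^{1-2/\alpha}$ that diverges once $\alpha>2$.

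The paper closes this case with an extra ingredient you would need to supply: the maximal Poisson--Bernoulli coupling of Section \ref{sec:couple}. It constructs an all-Poisson coupled degree vector $\tilde{\mathbf{D}}_1(n)$, shows $\tfrac{n}{k_n}\mathbb{P}\bigl(\|\mathbf{D}_1(n)-\tilde{\mathbf{D}}_1(n)\|_p\geq\eta b(n/k_n)\bigr)\rightarrow 0$ (Lemma \ref{lem:coupletemp}, using Lemma \ref{lem:sumgsqr}, Theorem 9.1 of \cite{downey2007ratio} and condition (C3)), and then runs your three-term decomposition on $\tilde{\mathbf{D}}_1(n)$ instead, where Lemma \ref{lem:Dapprox} is available (Lemma \ref{lem:coupletempfinal}). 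Without this step, or some substitute such as extending the high-order moment bound of Lemma \ref{lem:mnorm} to Poisson-binomial sums, your proof does not establish the lemma in the generality claimed.
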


We divide the proof of Lemma \ref{lem:tailempapprox} into two parts. The first, presented in Lemma \ref{lem:tailempapproxsub}, employs concentration results developed in Sections \ref{sec:degconc} and \ref{sec:gconc} to prove a tail empirical measure approximation for the submodels where either $|\mathcal{L}_1| = L$ or $|\mathcal{L}_1| < L, |\mathcal{L}_2| \leq L$ and $\alpha \in (1, 2]$. The second, presented in Lemma \ref{lem:coupletempfinal}, employs the coupling presented in Section \ref{sec:couple} to approximate the tail empirical measure when $|\mathcal{L}_1| < L, |\mathcal{L}_2| \leq L$ and $\alpha > 2$. 

\begin{lemma}
\label{lem:tailempapproxsub}
Suppose (C1)-(C3) hold. Additionally, suppose either  $|\mathcal{L}_1| = L$ or $|\mathcal{L}_1| < L, |\mathcal{L}_2| \leq L$ with the restriction that $\alpha \in (1, 2]$. Then as $n \rightarrow \infty$
\begin{align}
\label{eq:tailemp1}
&\frac{n}{k_n} \mathbb{P}\left( \| \mathbf{D}_1(n) \|_p > yb(n/k_n),  \| \boldsymbol{\mathcal{W}}_1 \|_p \leq yb(n/k_n) \right) \rightarrow 0, \\
\label{eq:tailemp2}
&\frac{n}{k_n} \mathbb{P}\left( \| \boldsymbol{\mathcal{W}}_1 \|_p > yb(n/k_n),  \| \mathbf{D}_1(n) \|_p \leq yb(n/k_n) \right) \rightarrow 0.
\end{align}
\end{lemma}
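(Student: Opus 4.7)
The plan is to deduce the lemma from the combination of Lemmas \ref{lem:Dapprox}/\ref{lem:Dapprox2}, Lemma \ref{lem:gapprox}, and the regular variation of $\|\boldsymbol{\mathcal{W}}_1\|_p$ via the reverse triangle inequality. Specifically, note that under either submodel, choosing $u$ freely in Lemma \ref{lem:Dapprox} (or appealing directly to Lemma \ref{lem:Dapprox2} when $|\mathcal{L}_1|<L$ and $\alpha \in (1,2]$) produces the unconditional statement
\begin{align*}
\frac{n}{k_n}\mathbb{P}\bigl(\|\mathbf{D}_1(n) - \boldsymbol{g}_1(n)\|_p \geq \eta b(n/k_n)\bigr) \to 0,
\end{align*}
and Lemma \ref{lem:gapprox} gives the analogous statement with $\boldsymbol{g}_1(n)$ replaced by $\boldsymbol{\mathcal{W}}_1$ on the second factor. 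Combining via the triangle inequality,
\begin{align*}
\|\mathbf{D}_1(n) - \boldsymbol{\mathcal{W}}_1\|_p \leq \|\mathbf{D}_1(n) - \boldsymbol{g}_1(n)\|_p + \|\boldsymbol{g}_1(n) - \boldsymbol{\mathcal{W}}_1\|_p,
\end{align*}
so $\tfrac{n}{k_n}\mathbb{P}(\|\mathbf{D}_1(n) - \boldsymbol{\mathcal{W}}_1\|_p > \eta b(n/k_n)) \to 0$ for every $\eta > 0$ in both submodels.

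To obtain \eqref{eq:tailemp1}, fix $\eta \in (0, y)$ and split the event $\{\|\mathbf{D}_1(n)\|_p > yb(n/k_n),\ \|\boldsymbol{\mathcal{W}}_1\|_p \leq yb(n/k_n)\}$ according to whether $\|\boldsymbol{\mathcal{W}}_1\|_p$ is below or above $(y-\eta)b(n/k_n)$. In the former case the reverse triangle inequality forces $\|\mathbf{D}_1(n) - \boldsymbol{\mathcal{W}}_1\|_p > \eta b(n/k_n)$, and in the latter case $\|\boldsymbol{\mathcal{W}}_1\|_p$ lies in the thin shell $((y-\eta)b(n/k_n), yb(n/k_n)]$. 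Hence
\begin{align*}
\frac{n}{k_n}\mathbb{P}\bigl(\|\mathbf{D}_1(n)\|_p > yb(n/k_n),\ \|\boldsymbol{\mathcal{W}}_1\|_p \leq yb(n/k_n)\bigr) \leq& \ \frac{n}{k_n}\mathbb{P}\bigl(\|\mathbf{D}_1(n) - \boldsymbol{\mathcal{W}}_1\|_p > \eta b(n/k_n)\bigr) \\
&+ \frac{n}{k_n}\mathbb{P}\bigl((y-\eta)b(n/k_n) < \|\boldsymbol{\mathcal{W}}_1\|_p \leq yb(n/k_n)\bigr).
\end{align*}
The first summand vanishes by the paragraph above. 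Since $\boldsymbol{\mathcal{W}}_1$ is MRV with index $\alpha$ by (C2), $\|\boldsymbol{\mathcal{W}}_1\|_p$ is univariate regularly varying with index $\alpha$, so the second summand converges to $(y-\eta)^{-\alpha} - y^{-\alpha}$. Letting $\eta \downarrow 0$ concludes \eqref{eq:tailemp1}.

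The argument for \eqref{eq:tailemp2} is symmetric: split the event $\{\|\boldsymbol{\mathcal{W}}_1\|_p > yb(n/k_n),\ \|\mathbf{D}_1(n)\|_p \leq yb(n/k_n)\}$ according to whether $\|\boldsymbol{\mathcal{W}}_1\|_p$ exceeds $(y+\eta)b(n/k_n)$, in which case the reverse triangle inequality again supplies $\|\mathbf{D}_1(n) - \boldsymbol{\mathcal{W}}_1\|_p > \eta b(n/k_n)$, or lies in $(yb(n/k_n), (y+\eta)b(n/k_n)]$, whose rescaled probability tends to $y^{-\alpha} - (y+\eta)^{-\alpha}$ and then to $0$ as $\eta \downarrow 0$. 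The only real subtlety is bookkeeping the two submodels so that both hypotheses for the $\mathbf{D}_1(n)$-vs-$\boldsymbol{g}_1(n)$ concentration are available; since the hypothesis of Lemma \ref{lem:Dapprox} requires $|\mathcal{L}_1|=L$ while Lemma \ref{lem:Dapprox2} supplies the complementary case under $\alpha \in (1,2]$, the two together cover exactly the submodels assumed here, and no other obstacle appears.
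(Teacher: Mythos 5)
Your proposal is correct and follows essentially the same route as the paper: a thin-shell decomposition on $\|\boldsymbol{\mathcal{W}}_1\|_p$, the reverse triangle inequality to reduce to concentration of $\|\mathbf{D}_1(n)-\boldsymbol{\mathcal{W}}_1\|_p$, a further triangle-inequality split through $\boldsymbol{g}_1(n)$ handled by Lemmas \ref{lem:Dapprox}/\ref{lem:Dapprox2} and \ref{lem:gapprox}, and regular variation of $\|\boldsymbol{\mathcal{W}}_1\|_p$ followed by $\eta \downarrow 0$. The only cosmetic difference is that you first obtain the unconditional concentration statement by summing the two halves of Lemma \ref{lem:Dapprox} over the complementary events $\{\|\boldsymbol{\mathcal{W}}_1\|_p \leq ub(n/k_n)\}$ and $\{\|\boldsymbol{\mathcal{W}}_1\|_p \geq ub(n/k_n)\}$ (which is valid), whereas the paper carries the restriction on $\|\boldsymbol{\mathcal{W}}_1\|_p$ through to the final application of those lemmas.
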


\begin{proof}
Fix $\eta > 0$. See that
\begin{align*}
\frac{n}{k_n} \mathbb{P}&\left( \| \mathbf{D}_1(n) \|_p > yb(n/k_n),  \| \boldsymbol{\mathcal{W}}_1\|_p \leq yb(n/k_n) \right) \\
\leq& \frac{n}{k_n} \mathbb{P}\left(\| \boldsymbol{\mathcal{W}}_1 \|_p \in (y - \eta, y]b(n/k_n) \right) \\
&+ \frac{n}{k_n} \mathbb{P}\left(\| \mathbf{D}_1(n) \|_p - \| \boldsymbol{\mathcal{W}}_1 \|_p \geq \eta b(n/k_n), \| \boldsymbol{\mathcal{W}}_1 \|_p \leq (y-\eta)b(n/k_n) \right) \\
\equiv& A_{11}(n) + A_{12}(n).
\end{align*}
Similarly,
\begin{align*}
\frac{n}{k_n} \mathbb{P}&\left(\| \boldsymbol{\mathcal{W}}_1\|_p  > yb(n/k_n), \| \mathbf{D}_1(n) \|_p \leq yb(n/k_n) \right) \\
\leq& \frac{n}{k_n} \mathbb{P}\left(\| \boldsymbol{\mathcal{W}}_1 \|_p \in (y, y + \eta]b(n/k_n) \right) \\
&+ \frac{n}{k_n} \mathbb{P}\left(\| \boldsymbol{\mathcal{W}}_1 \|_p - \| \mathbf{D}_1(n) \|_p \geq \eta b(n/k_n), \| \boldsymbol{\mathcal{W}}_1 \|_p \geq (y+\eta)b(n/k_n) \right) \\
\equiv& A_{21}(n) + A_{22}(n).
\end{align*}
By regular variation of $\boldsymbol{\mathcal{W}}_1$, we have that as $n \rightarrow \infty$
\begin{align*}
A_{11}(n) =& \frac{n}{k_n} \mathbb{P}\left(\|\boldsymbol{\mathcal{W}}_1 \|_p \in (y - \eta, y]b(n/k_n) \right) \rightarrow (y - \eta)^{-\alpha} - y^{- \alpha}, \\
A_{21}(n) =& \frac{n}{k_n} \mathbb{P}\left(\|\boldsymbol{\mathcal{W}}_1 \|_p \in (y, y + \eta]b(n/k_n) \right) \rightarrow y^{-\alpha} - (y+\eta)^{- \alpha}.
\end{align*}
We now analye the behavior of $A_{12}(n)$. Using the fact that $\| \mathbf{D}_1(n) \|_p - \| \boldsymbol{\mathcal{W}}_1 \|_p \leq \left| \| \mathbf{D}_1(n) \|_p - \| \boldsymbol{\mathcal{W}}_1 \|_p \right| \leq \| \mathbf{D}_1(n) - \boldsymbol{\mathcal{W}}_1 \|_p$,
\begin{align*}
A_{12}(n) &= \frac{n}{k_n} \mathbb{P}\left(\| \mathbf{D}_1(n) \|_p - \| \boldsymbol{\mathcal{W}}_1 \|_p \geq \eta b(n/k_n), \| \boldsymbol{\mathcal{W}}_1 \|_p \leq (y-\eta)b(n/k_n) \right) \\
&\leq \frac{n}{k_n} \mathbb{P}\left(\| \mathbf{D}_1(n) - \boldsymbol{\mathcal{W}}_1  \|_p  \geq \eta b(n/k_n), \| \boldsymbol{\mathcal{W}}_1 \|_p \leq (y-\eta)b(n/k_n) \right) 
\end{align*}
Define  $\boldsymbol{g}_1(n) = \mathbb{E}[\mathbf{D}_1(n) \mid \boldsymbol{\mathcal{W}}_{[n]}]$. Then
\begin{align*}
A_{12}(n) \leq& \frac{n}{k_n}  \mathbb{P}\left(\| \mathbf{D}_1(n) - \boldsymbol{g}_1(n)  \|_p  \geq \eta b(n/k_n)/2, \| \boldsymbol{\mathcal{W}}_1 \|_p \leq (y-\eta)b(n/k_n)\right) \\
&+\frac{n}{k_n} \mathbb{P}\left(\| \boldsymbol{g}_1(n) - \boldsymbol{\mathcal{W}}_1 \|_p \geq \eta b(n/k_n)/2, \| \boldsymbol{\mathcal{W}}_1 \|_p \leq (y-\eta)b(n/k_n) \right) \\
\leq& \frac{n}{k_n}  \mathbb{P}\left(\| \mathbf{D}_1(n) - \boldsymbol{g}_1(n)  \|_p  \geq \eta b(n/k_n)/2, \| \boldsymbol{\mathcal{W}}_1 \|_p \leq (y-\eta)b(n/k_n)\right) \\
&+\frac{n}{k_n} \mathbb{P}\left(\| \boldsymbol{g}_1(n) - \boldsymbol{\mathcal{W}}_1 \|_p \geq \eta b(n/k_n)/2 \right) \\
\equiv& A_{121}(n) + A_{122}(n).
\end{align*}
From Lemmas \ref{lem:Dapprox} and \ref{lem:Dapprox2}, $A_{121}(n) \rightarrow 0$ as $n \rightarrow \infty$. Additionally, Lemma \ref{lem:gapprox} gives that $A_{122}(n) \rightarrow 0$ as $n \rightarrow \infty$. Hence $A_{12}(n) \rightarrow 0$ as $n \rightarrow \infty$. Similarly, for $A_{22}(n)$ 
\begin{align*}
A_{22}(n) =& \frac{n}{k_n} \mathbb{P}\left(\| \boldsymbol{\mathcal{W}}_1 \|_p - \| \mathbf{D}_1(n) \|_p \geq \eta b(n/k_n), \| \boldsymbol{\mathcal{W}}_1 \|_p \geq (y+\eta)b(n/k_n) \right) \\
\leq& \frac{n}{k_n} \mathbb{P}\left(\| \mathbf{D}_1(n) - \boldsymbol{\mathcal{W}}_1  \|_p  \geq \eta b(n/k_n), \| \boldsymbol{\mathcal{W}}_1 \|_p \geq (y+\eta)b(n/k_n) \right) \\
\leq& \frac{n}{k_n} \mathbb{P}\left(\| \mathbf{D}_1(n) - \boldsymbol{g}_1(n)  \|_p  \geq \eta b(n/k_n)/2, \| \boldsymbol{\mathcal{W}}_1 \|_p \geq (y+\eta)b(n/k_n) \right) \\
&+ \frac{n}{k_n} \mathbb{P}\left(\| \boldsymbol{g}_1(n) - \boldsymbol{\mathcal{W}}_1 \|_p \geq \eta b(n/k_n)/2, \| \boldsymbol{\mathcal{W}}_1 \|_p \geq (y+\eta)b(n/k_n) \right) \\
\leq& \frac{n}{k_n} \mathbb{P}\left(\| \mathbf{D}_1(n) - \boldsymbol{g}_1(n)  \|_p  \geq \eta b(n/k_n)/2, \| \boldsymbol{\mathcal{W}}_1 \|_p \geq (y+\eta)b(n/k_n) \right) \\
&+ \frac{n}{k_n} \mathbb{P}\left(\| \boldsymbol{g}_1(n) - \boldsymbol{\mathcal{W}}_1 \|_p \geq \eta b(n/k_n)/2 \right) \\
\equiv& A_{221}(n) + A_{222}(n).
\end{align*}
From Lemmas \ref{lem:Dapprox}, \ref{lem:Dapprox2} and \ref{lem:gapprox}, $A_{221}(n) \rightarrow 0$ and $A_{222}(n) \rightarrow 0$ as $n \rightarrow \infty$. Hence $A_{22}(n) \rightarrow 0$ as $n \rightarrow \infty$. In summary, 
\begin{align*}
\frac{n}{k_n} \mathbb{P}&\left( \| \mathbf{D}_1(n) \|_p > yb(n/k_n),  \| \boldsymbol{\mathcal{W}}_1\|_p \leq yb(n/k_n) \right) \leq A_{11}(n) + A_{12}(n) \rightarrow (y - \eta)^{-\alpha} - y^{- \alpha}, \\
\frac{n}{k_n} \mathbb{P}&\left(\| \boldsymbol{\mathcal{W}}_1\|_p  > yb(n/k_n), \| \mathbf{D}_1(n) \|_p \leq yb(n/k_n) \right) \leq A_{21}(n) + A_{22}(n)\rightarrow y^{-\alpha} - (y+\eta)^{- \alpha},
\end{align*}
as $n \rightarrow \infty$. Since $\eta > 0$ was arbitrary, we have proven \eqref{eq:tailemp1} and \eqref{eq:tailemp2}.
\end{proof}

In Lemma \ref{lem:tailempapproxsub}, we proved an approximation of the tail empirical measure for the MIRG model with $|\mathcal{L}_1| = L$. We use this result, along with the coupling presented in Section \ref{sec:couple} to approximate the tail empirical measure when $|\mathcal{L}_1| < L, |\mathcal{L}_2| \leq L$ and $\alpha > 2$. In order to do so, we first compare $\mathbf{D}_1(n)$ with $\tilde{\mathbf{D}}_1(n)$ presented in Section \ref{sec:couple}.

\begin{lemma}
\label{lem:coupletemp}
Suppose (C1) and(C3) hold. Suppose (C2) holds with $\alpha > 2$. Let $|\mathcal{L}_1| < L, |\mathcal{L}_2| \leq L$. Then 
\begin{align*}
\frac{n}{k_n}  \mathbb{P}\left(\| \mathbf{D}_1(n) - \tilde{\mathbf{D}}_1(n) \|_p  \geq \eta b(n/k_n) \right) \rightarrow 0,
\end{align*}
as $n \rightarrow \infty$.
\end{lemma}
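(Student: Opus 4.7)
The strategy is to exploit the coupling directly. Since $\tilde{A}_{ijl} = A_{ijl}$ for every $l \in \mathcal{L}_1$ by construction, only the $\mathcal{L}_2$ layers can contribute to the difference, and using $\|\mathbf{x}\|_p \leq \|\mathbf{x}\|_1$ (for $p \geq 1$ and non-negative $\mathbf{x}$) together with the triangle inequality,
\begin{align*}
\|\mathbf{D}_1(n) - \tilde{\mathbf{D}}_1(n)\|_p \leq \sum_{l \in \mathcal{L}_2}\sum_{j=1}^n |\tilde{A}_{1jl} - A_{1jl}|.
\end{align*}
Taking conditional expectations via \eqref{eq:expcouple} and applying Markov's inequality, the problem reduces to showing
\begin{align*}
\frac{n}{k_n\, b(n/k_n)} \sum_{l \in \mathcal{L}_2} \mathbb{E}\!\left[\sum_{j=1}^n g_l^2\!\left(\frac{W_{1l}W_{jl}}{T_l(n)}\right)\right] \longrightarrow 0.
\end{align*}

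I would then invoke Lemma \ref{lem:sumgsqr}, which requires restricting attention to the ``good'' event $\mathcal{A}_n(\delta) = \{\max_{l \in [L]} W_{1l}\,W_{(1)l}(n)/T_l(n) < \delta\}$. The contribution of $\mathcal{A}_n(\delta)^c$ is controlled by Lemma \ref{lem:Wsmall} with $\beta = 1$ (valid since $\alpha > 2 > 1$) combined with a union bound over $l$. On $\mathcal{A}_n(\delta)$, Lemma \ref{lem:sumgsqr} yields $\sum_j g_l^2(W_{1l}W_{jl}/T_l(n)) \leq C\, W_{1l}^2\, W_{(1)l}(n)/T_l(n)$, so the task reduces to proving
\begin{align*}
\frac{n}{k_n\, b(n/k_n)}\,\mathbb{E}\!\left[W_{1l}^2\,\frac{W_{(1)l}(n)}{T_l(n)}\,;\,\mathcal{A}_n(\delta)\right] \longrightarrow 0.
\end{align*}

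The hard part will be securing a sufficiently sharp estimate of this expectation; a naive symmetry argument combined with $W_{(1)l}(n)/T_l(n) \leq 1$ only yields $O(n^{2/\alpha - 1})$, which is not fast enough. The plan is to split according to whether $W_{1l} = W_{(1)l}(n)$ and to \emph{use} the indicator $\mathcal{A}_n(\delta)$ in the diagonal case. On $\{W_{1l} < W_{(1)l}(n)\}$, $W_{(1)l}(n) = M_{-1,l} := \max_{j \neq 1} W_{jl}$ is independent of $W_{1l}$, so bounding $T_l(n) \geq T_{-1,l} := \sum_{j \neq 1} W_{jl}$ and combining $\mathbb{E}[W_{1l}^2] < \infty$ (from $\alpha > 2$) with Theorem 9.1 of \cite{downey2007ratio} and Proposition 2.1 of \cite{resnick2008extreme} yields $\mathbb{E}[W_{1l}^2]\,\mathbb{E}[M_{-1,l}/T_{-1,l}] = O(b^\star(n)/n) = O(n^{1/\alpha - 1})$. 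On $\mathcal{A}_n(\delta) \cap \{W_{1l} = W_{(1)l}(n)\}$, the defining inequality of $\mathcal{A}_n(\delta)$ forces $W_{1l}^2/T_l(n) < \delta$, hence $W_{1l}^3/T_l(n) \leq \delta\, W_{1l}$; exchangeability then gives $\mathbb{E}[W_{1l}\,;\,W_{1l} = W_{(1)l}(n)] = (1/n)\,\mathbb{E}[W_{(1)l}(n)] = O(n^{1/\alpha - 1})$ via Proposition 2.1 of \cite{resnick2008extreme}. Combining the two cases and using $b(n/k_n) \sim (n/k_n)^{1/\alpha}$ up to slow variation, the required rate becomes $k_n^{-(1 - 1/\alpha)} \to 0$, which holds since $\alpha > 1$ and $k_n \to \infty$.
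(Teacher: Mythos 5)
Your proposal is correct and follows essentially the same route as the paper: reduce to the $\ell_1$ norm of the edge-level coupling errors, apply Markov's inequality with \eqref{eq:expcouple} and Lemma \ref{lem:sumgsqr} on the good event (whose complement is controlled by Lemma \ref{lem:Wsmall}/Lemma \ref{lem:Ox2}), and bound the resulting expectation of $W_{1l}^2 W_{(1)l}(n)/T_l(n)$ via independence, Theorem 9.1 of \cite{downey2007ratio} and Proposition 2.1 of \cite{resnick2008extreme}, arriving at the same $b^\star(n)/(k_n b(n/k_n))$ rate. The only immaterial difference is in the diagonal case: the paper discards the event $\{W_{1l} > \max_{i \in [n]\setminus\{1\}} W_{il}\}$ outright at probability cost $1/k_n$, whereas you bound its expected contribution directly using the constraint $W_{1l}^2/T_l(n) < \delta$ together with exchangeability.
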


\begin{proof}
It suffices to prove the case where $|\mathcal{L}_2| = L$ since the distance $\| \mathbf{D}_1(n) - \tilde{\mathbf{D}}_1(n) \|_p$ is increasing with the number of Bernoulli-layers. By equivalence of $\ell_p$ norms, we may instead show that as $n \rightarrow \infty$
\begin{align*}
\frac{n}{k_n}  \mathbb{P}\left(\| \mathbf{D}_1(n) - \tilde{\mathbf{D}}_1(n) \|_1  \geq \eta b(n/k_n)/L^{1/p} \right) \rightarrow 0.
\end{align*}
Note that for every $l \in [L]$
\begin{align}
\label{eq:max2}
\frac{n}{k_n}\mathbb{P}\left(W_{1l} > \underset{i \in [n]\setminus \{1\}}{\max} W_{il} \right) = \frac{n}{k_n} \frac{1}{n} = \frac{1}{k_n} \rightarrow 0, \qquad \text{as } n \rightarrow \infty.
\end{align}
Define the event
\begin{align*}
\mathcal{H}_n \equiv \left\lbrace \max_{l \in [L]} W_{1l}\frac{W_{(1)l}(n)}{T_l(n)} < \delta, W_{1l} \leq \underset{i \in [n]\setminus \{1\}}{\max} W_{il} \quad \forall l \in [L] \right\rbrace.
\end{align*}
By Lemma \ref{lem:Ox2} and \eqref{eq:max2}, it suffices to show that as $n \rightarrow \infty$
\begin{align}
\label{eq:couplegoal}
\frac{n}{k_n}  \mathbb{P}\left(\| \mathbf{D}_1(n) - \tilde{\mathbf{D}}_1(n) \|_1  \geq \eta b(n/k_n)/L^{1/p}, \mathcal{H}_n \right) \rightarrow 0,
\end{align}
where $\delta > 0$ is as in \eqref{eq:glin}. Note that by Markov's inequality and \eqref{eq:expcouple},
\begin{align*}
\frac{n}{k_n}  &\mathbb{P}\left(\| \mathbf{D}_1(n) - \tilde{\mathbf{D}}_1(n) \|_1  \geq \eta b(n/k_n)/L^{1/p}, \mathcal{H}_n \right) \\
&= \frac{n}{k_n}  \mathbb{E}\left[ \mathbb{P}\left(\| \mathbf{D}_1(n) - \tilde{\mathbf{D}}_1(n) \|_1  \geq \eta b(n/k_n)/L^{1/p}  \bigg\vert \ \mathbf{W}_{[n]} \right) 1_{\mathcal{H}_n} \right] \\
&\leq \frac{L^{1/p}}{\eta b(n/k_n)}K \frac{n}{k_n} \mathbb{E}\left[ \sum_{l = 1}^L \sum_{j = 1}^n  g_l^2\left(\frac{W_{1l}W_{jl}}{T_{l}(n)}\right)1_{\mathcal{H}_n} \right].
\end{align*}
Note that if $\max_{l \in [L]} W_{1l}\frac{W_{(1)l}(n)}{T_l(n)} < \delta$, by Lemma \ref{lem:sumgsqr}
\begin{align*}
\sum_{l = 1}^L \sum_{j = 1}^n  g_l^2\left(\frac{W_{1l}W_{jl}}{T_{l}(n)}\right) \leq& \sum_{l = 1}^L  C  W^{2}_{1l} \frac{W_{(1)l}(n)}{T_{l}(n)},
\end{align*}
for some constant $C > 0$. Hence
\begin{align*}
\frac{n}{k_n}  &\mathbb{P}\left(\| \mathbf{D}_1(n) - \tilde{\mathbf{D}}_1(n) \|_1  \geq \eta b(n/k_n)/L^{1/p}, \mathcal{H}_n \right) \\
&\leq \frac{L^{1/p}}{\eta b(n/k_n)}KC \frac{n}{k_n} \mathbb{E}\left[ \sum_{l = 1}^L W^{2}_{1l} \frac{W_{(1)l}(n)}{T_{l}(n)} 1_{\left\lbrace \cap_{l \in [L]}\left\lbrace W_{1l} \leq \underset{i \in [n]\setminus \{1\}}{\max} W_{il} \right\rbrace \right\rbrace}  \right],
\end{align*}
and hence it suffices to show that for any $l \in [L]$
\begin{align*}
\frac{1}{b(n/k_n)}\frac{n}{k_n} \mathbb{E}\left[ W^{2}_{1l} \frac{W_{(1)l}(n)}{T_{l}(n)}1_{\left\lbrace \bigcap_{l \in [L]}\left\lbrace W_{1l} \leq \underset{i \in [n]\setminus \{1\}}{\max} W_{il} \right\rbrace \right\rbrace} \right] \rightarrow 0, \qquad \text{as } n \rightarrow \infty.
\end{align*}
See that if $W_{1l} \leq \underset{i \in [n]\setminus \{1\}}{\max} W_{il}$, by similar calculations in the proofs of Lemmas \ref{lem:Wsmall} and \ref{lem:Ox2}
\begin{align*}
 W^{2}_{1l} \frac{W_{(1)l}(n)}{T_{l}(n)} \leq& 2 \frac{W^2_{1l}\underset{i \in [n]\setminus \{1\}}{\max} W_{il}}{\sum_{i \in [n]\setminus \{1\}} W_{il}}.
\end{align*}
and
\begin{align*}
\frac{1}{b(n/k_n)}\frac{n}{k_n}& \mathbb{E}\left[ W^{2}_{1l} \frac{W_{(1)l}(n)}{T_{l}(n)}1_{\left\lbrace \bigcap_{l \in [L]}\left\lbrace W_{1l} \leq \underset{i \in [n]\setminus \{1\}}{\max} W_{il} \right\rbrace \right\rbrace} \right] \\
\leq& \frac{2}{b(n/k_n)}\frac{n}{k_n} \mathbb{E}[W^2_{1l}] \mathbb{E}\left[\frac{\underset{i \in [n]\setminus \{1\}}{\max} W_{il}}{\sum_{i \in [n]\setminus \{1\}} W_{il}}\right] \\
\sim& \frac{2}{b(n/k_n)}\frac{n}{k_n} \frac{\mathbb{E}[W^2_{1l}]}{\mathbb{E}[W_{1l}]} \frac{\mathbb{E}\left[\underset{i \in [n]\setminus \{1\}}{\max} W_{il}\right]}{n - 1} \\
\sim& \frac{2}{b(n/k_n)}\frac{1}{k_n} \frac{\mathbb{E}[W^2_{1l}]}{\mathbb{E}[W_{1l}]} \Gamma\left(1 - \frac{1}{\alpha}\right)b^\star(n),
\end{align*}
where we have applied Theorem 9.1 of \cite{downey2007ratio} and Proposition 2.1 of \cite{resnick2008extreme}. Here, $b^\star(t)$ is the $1-1/t$ quantile function of the distribution of $W_{1l}$ for $t \geq 1$.  Since $b^\star(t)$ is regularly varying with index $1/\alpha$, we may apply (C3) to obtain that bound tends to zero and the proof is complete.
\end{proof}

We now prove the tail empirical measure approximation when $|\mathcal{L}_1| < L, |\mathcal{L}_2| \leq L$ and $\alpha > 2$.

\begin{lemma}
\label{lem:coupletempfinal}
Suppose (C1) and(C3) hold. Suppose (C2) holds with $\alpha > 2$. Let $|\mathcal{L}_1| < L, |\mathcal{L}_2| \leq L$. Then for $y > 0$
\begin{align}
\label{eq:coupletempfinal1}
\frac{n}{k_n}  \mathbb{P}\left(\| \mathbf{D}_1(n) \|_p  > yb(n/k_n), \| \boldsymbol{\mathcal{W}}_1 \|_p  \leq yb(n/k_n) \right) \rightarrow 0, \\
\label{eq:coupletempfinal2}
\frac{n}{k_n}  \mathbb{P}\left(\| \mathbf{D}_1(n) \|_p  \leq yb(n/k_n), \| \boldsymbol{\mathcal{W}}_1 \|_p  > yb(n/k_n) \right) \rightarrow 0,
\end{align}
as $n \rightarrow \infty$.
\end{lemma}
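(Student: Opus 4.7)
The plan is to lift the already-established tail-empirical-measure approximation for the all-Poisson MIRG to the mixed case by going through the coupled graph $\tilde{\mathbf{A}}(n)$ from Section \ref{sec:couple}. The key observation is that, by construction, $\tilde{\mathbf{D}}_1(n)$ is the degree vector of node $1$ in a MIRG whose layers are \emph{all} Poisson layers (with the same conditional means $g_l(W_{il}W_{jl}/T_l(n))$ and sharing the same weight vectors $\mathbf{W}_{[n]}$, hence the same $\boldsymbol{\mathcal{W}}_1$). Therefore $\tilde{\mathbf{D}}_1(n)$ satisfies the hypothesis $|\mathcal{L}_1|=L$ of Lemma \ref{lem:tailempapproxsub}, so that for every $y>0$,
\begin{align*}
\frac{n}{k_n}\mathbb{P}\!\left(\|\tilde{\mathbf{D}}_1(n)\|_p > yb(n/k_n),\,\|\boldsymbol{\mathcal{W}}_1\|_p \leq yb(n/k_n)\right) &\to 0, \\
\frac{n}{k_n}\mathbb{P}\!\left(\|\tilde{\mathbf{D}}_1(n)\|_p \leq yb(n/k_n),\,\|\boldsymbol{\mathcal{W}}_1\|_p > yb(n/k_n)\right) &\to 0,
\end{align*}
regardless of whether $\alpha>2$. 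The task then reduces to transferring these convergences from $\tilde{\mathbf{D}}_1(n)$ to $\mathbf{D}_1(n)$.

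For \eqref{eq:coupletempfinal1}, I would fix $\eta > 0$ and use the reverse triangle inequality $\bigl|\|\mathbf{D}_1(n)\|_p - \|\tilde{\mathbf{D}}_1(n)\|_p\bigr| \leq \|\mathbf{D}_1(n) - \tilde{\mathbf{D}}_1(n)\|_p$ to dominate
\begin{align*}
&\frac{n}{k_n}\mathbb{P}\!\left(\|\mathbf{D}_1(n)\|_p > yb(n/k_n),\,\|\boldsymbol{\mathcal{W}}_1\|_p \leq yb(n/k_n)\right) \\
&\quad \leq \frac{n}{k_n}\mathbb{P}\!\left(\|\tilde{\mathbf{D}}_1(n)\|_p > (y-\eta)b(n/k_n),\,\|\boldsymbol{\mathcal{W}}_1\|_p \leq yb(n/k_n)\right) \\
&\qquad + \frac{n}{k_n}\mathbb{P}\!\left(\|\mathbf{D}_1(n)-\tilde{\mathbf{D}}_1(n)\|_p > \eta b(n/k_n)\right).
\end{align*}
The second summand vanishes by Lemma \ref{lem:coupletemp}. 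Split the first summand further according to whether $\|\boldsymbol{\mathcal{W}}_1\|_p \leq (y-\eta)b(n/k_n)$ or $\|\boldsymbol{\mathcal{W}}_1\|_p \in ((y-\eta)b(n/k_n),\,yb(n/k_n)]$; the former vanishes by Lemma \ref{lem:tailempapproxsub} applied to $\tilde{\mathbf{D}}_1(n)$ at level $y-\eta$, while the latter tends to $(y-\eta)^{-\alpha} - y^{-\alpha}$ by the regular variation of $\|\boldsymbol{\mathcal{W}}_1\|_p$. Letting $\eta\downarrow 0$ gives \eqref{eq:coupletempfinal1}.

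Statement \eqref{eq:coupletempfinal2} is proved by the symmetric decomposition: one bounds
\begin{align*}
&\frac{n}{k_n}\mathbb{P}\!\left(\|\mathbf{D}_1(n)\|_p \leq yb(n/k_n),\,\|\boldsymbol{\mathcal{W}}_1\|_p > yb(n/k_n)\right) \\
&\quad \leq \frac{n}{k_n}\mathbb{P}\!\left(\|\tilde{\mathbf{D}}_1(n)\|_p \leq (y+\eta)b(n/k_n),\,\|\boldsymbol{\mathcal{W}}_1\|_p > yb(n/k_n)\right) \\
&\qquad + \frac{n}{k_n}\mathbb{P}\!\left(\|\mathbf{D}_1(n)-\tilde{\mathbf{D}}_1(n)\|_p > \eta b(n/k_n)\right),
\end{align*}
and splits the first summand using $\|\boldsymbol{\mathcal{W}}_1\|_p > (y+\eta)b(n/k_n)$ versus $\|\boldsymbol{\mathcal{W}}_1\|_p \in (yb(n/k_n),\,(y+\eta)b(n/k_n)]$; the former vanishes by Lemma \ref{lem:tailempapproxsub} at level $y+\eta$, the latter tends to $y^{-\alpha}-(y+\eta)^{-\alpha}$, and $\eta\downarrow 0$ concludes.

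There is no real analytic difficulty left once the two ingredients are in place; the subtle point is merely that Lemma \ref{lem:coupletemp} (which is the place where $\alpha>2$ is used via Karamata on $\mathbb{E}[W_{1l}^2]$) lets the coupling error be negligible at the threshold $b(n/k_n)$, so the buffer width $\eta b(n/k_n)$ is enough. The only thing to double-check while writing is that Lemma \ref{lem:tailempapproxsub} is legitimately applicable to $\tilde{\mathbf{D}}_1(n)$: the weight vectors, the constants $c_l$, and the functions $g_l$ are identical to those in the original MIRG (so conditions (C1)--(C3) transfer verbatim), and only the conditional distribution of the edges has changed from Bernoulli to Poisson, which is exactly the $|\mathcal{L}_1|=L$ regime covered by that lemma.
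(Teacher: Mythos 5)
Your proof is correct and follows essentially the same route as the paper's: both reduce the mixed case to the coupled all-Poisson graph via Lemma \ref{lem:coupletemp} with an $\eta b(n/k_n)$ buffer, then invoke the concentration machinery already established for the $|\mathcal{L}_1|=L$ regime, and finish by letting $\eta\downarrow 0$ against the regular variation of $\|\boldsymbol{\mathcal{W}}_1\|_p$. The only difference is organizational: you cite Lemma \ref{lem:tailempapproxsub} applied to $\tilde{\mathbf{D}}_1(n)$ as a black box, whereas the paper unpacks it by decomposing $\|\mathbf{D}_1(n)-\boldsymbol{\mathcal{W}}_1\|_p$ through $\tilde{\mathbf{D}}_1(n)$ and $\tilde{\boldsymbol{g}}_1(n)$ and citing Lemmas \ref{lem:Dapprox} and \ref{lem:gapprox} directly.
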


\begin{proof}
We only prove \eqref{eq:coupletempfinal2} as the proof for \eqref{eq:coupletempfinal1} is very similar. See that for $\eta > 0$
\begin{align*}
\frac{n}{k_n}  \mathbb{P}&\left(\| \mathbf{D}_1(n) \|_p  \leq yb(n/k_n), \| \boldsymbol{\mathcal{W}}_1 \|_p  > yb(n/k_n) \right) \\
\leq& \frac{n}{k_n} \mathbb{P}\left(\| \mathbf{D}_1(n) - \boldsymbol{\mathcal{W}}_1 \|_p  > \eta b(n/k_n),\| \boldsymbol{\mathcal{W}}_1 \|_p  > (y + \eta)b(n/k_n) \right)  \\
&+ \frac{n}{k_n}  \mathbb{P}\left(\| \boldsymbol{\mathcal{W}}_1 \|_p  \in (y, y + \eta]b(n/k_n) \right) \\
=& B_1(n) + B_2(n).
\end{align*}
We prove that $B_1(n)$ and $B_2(n)$ converge to $0$ as $n \rightarrow \infty$. For $B_1(n)$, note that
\begin{align*}
B_1(n) \leq& \frac{n}{k_n} \mathbb{P}\left(\| \mathbf{D}_1(n) - \tilde{\mathbf{D}}_1(n) \|_p  > \eta b(n/k_n)/3 \right)\\
&+ \frac{n}{k_n} \mathbb{P}\left(\| \tilde{\mathbf{D}}_1(n) - \tilde{\boldsymbol{g}}_1(n) \|_p  > \eta b(n/k_n)/3 ,\| \boldsymbol{\mathcal{W}}_1 \|_p  > (y + \eta)b(n/k_n) \right) \\
&+ \frac{n}{k_n} \mathbb{P}\left(\| \tilde{\boldsymbol{g}}_1(n) - \boldsymbol{\mathcal{W}}_1 \|_p  > \eta b(n/k_n)/3  \right) \\
=& B_{11}(n) + B_{12}(n) + B_{13}(n),
\end{align*}
where we have defined $\tilde{\boldsymbol{g}}_1(n) = \mathbb{E}[\tilde{\mathbf{D}}_1(n) \mid \mathbf{W}_{[n]}]$. Note that as $n \rightarrow \infty$, $B_{11}(n) \rightarrow 0$ by Lemma \ref{lem:coupletemp}, $B_{12}(n) \rightarrow 0$ by Lemma \ref{lem:Dapprox} and $B_{13}(n) \rightarrow 0$ by Lemma \ref{lem:gapprox}. Hence $B_1(n) \rightarrow 0$ as $n \rightarrow \infty$. From regular variation of $\| \boldsymbol{\mathcal{W}}_1 \|_p$, we additionally have that as $n \rightarrow \infty$
\begin{align*}
B_{2}(n) \rightarrow y^{-\alpha} - (y + \eta)^{-\alpha}.
\end{align*}
Since $\eta > 0$ is arbitrary, the proof is complete
\end{proof}

With Lemma \ref{lem:tailempapprox} in hand, we are now able to prove \ref{thm:weakconvD}. The proof of Theorem \ref{thm:weakconvD} closely follows that of Proposition 3.5 of \cite{bhattacharjee2022large}.

\begin{proof}[Proof of Theorem \ref{thm:weakconvD}]
From Theorem 1.1 of \cite{kallenberg1973characterization}, it suffices to show that for any $d \in \mathbb{N}$
\begin{align}
\label{eq:fdd}
\left(\nu_n(I_1), \dots \nu_n(I_d) \right) \Rightarrow \left(\nu_\alpha(I_1), \dots \nu_\alpha(I_d) \right),
\end{align}
in $\mathbb{R}^d$ as $n \rightarrow \infty$ for intervals of the form $I_k = (a_k, b_k]$, $0 < a_k < b_k \leq \infty$, $k \in [d]$. Let $d$ denote the Euclidean metric. By Slutsky's Theorem, the convergence in \eqref{eq:fdd} is achieved by combining
\begin{align}
\label{eq:fddW}
&\left(\nu^\star_n(I_1), \dots \nu^\star_n(I_d) \right) \Rightarrow \left(\nu_\alpha(I_1), \dots \nu_\alpha(I_d) \right), \qquad \text{in } \mathbb{R}^d, \\
\label{eq:slutsky}
&d\left(\left(\nu_n(I_1), \dots \nu_n(I_d) \right), \left(\nu^\star_n(I_1), \dots \nu^\star_n(I_d) \right)\right) \xrightarrow{p} 0,
\end{align}
$n \rightarrow \infty$, where \eqref{eq:fddW} is given by \eqref{eq:weakconvW}. In order to prove \eqref{eq:slutsky}, it suffices to show that for any $0 < a < b \leq \infty$
\begin{align*}
\nu_n\left( (a, b] \right) - \nu^\star_n\left( (a, b] \right) \xrightarrow{p} 0.
\end{align*}
Towards this end, see that
\begin{align*}
\mathbb{E}\left| \nu_n\left( (a, b] \right) - \nu^\star_n\left( (a, b] \right) \right| =& \mathbb{E}\left| \nu_n\left( (a, \infty] \right) - \nu_n\left( (b, \infty] \right)  - \left( \nu^\star_n\left( (a, \infty] \right) - \nu^\star_n\left( (b, \infty] \right) \right)   \right| \\
\leq& \mathbb{E}\left| \nu_n\left( (a, \infty] \right) -  \nu^\star_n\left( (a, \infty] \right)   \right| + \mathbb{E}\left| \nu_n\left( (b, \infty] \right) -  \nu^\star_n\left( (b, \infty] \right)   \right|.
\end{align*}
Additionally note that for any $y \in (0, \infty]$
\begin{align*}
\mathbb{E}\left| \nu_n\left( (y, \infty] \right) -  \nu^\star_n\left( (y, \infty] \right)   \right| \leq& \frac{1}{k_n} \sum_{i = 1}^n \mathbb{E}\left[1_{\left\lbrace R_i(n) > yb(n/k_n), R^\star_i(n) \leq yb(n/k_n) \right\rbrace} \right] \\
&+ \frac{1}{k_n} \sum_{i = 1}^n \mathbb{E}\left[1_{\left\lbrace R^\star_i(n) > yb(n/k_n), R_i(n) \leq yb(n/k_n) \right\rbrace} \right] \\
=&\frac{n}{k_n}  \mathbb{P}\left(R_1(n) > yb(n/k_n), R^\star_1(n) \leq yb(n/k_n)  \right) \\
&+ \frac{n}{k_n}\mathbb{P}\left( R^\star_1(n) > yb(n/k_n), R_1(n) \leq yb(n/k_n) \right) \\
\rightarrow& 0, \qquad \text{as } n \rightarrow \infty,
\end{align*}
by Lemma \ref{lem:tailempapprox}. Hence we have proved \eqref{eq:slutsky} and the proof is complete.
\end{proof}

\subsection{Consistency of the Hill estimator}\label{sec:hillest}

In this section, we outline the steps required to obtain Theorem \ref{thm:hillc} from Theorem \ref{thm:weakconvD}. The steps are standard and closely follow the proof of Theorem 4.2 of \cite{resnick2007heavy}. 

\begin{proof}[Proof of Theorem \ref{thm:hillc}]
The proof consists of a series of three steps. \\
\textit{Step 1}: We prove that $R_{([k_n])}(n)/b(n/k_n) \xrightarrow{p} 1$ as $n \rightarrow \infty$. The convergence in \eqref{eq:tailemphatD} and inversion \citep[see Proposition 3.2 of][]{resnick2007heavy} gives that as $n \rightarrow \infty$
\begin{align}
\label{eq:ordstaty}
\frac{R_{([k_n y])}(n)}{b(n/k_n)} \xrightarrow{p} y^{-1/\alpha} \qquad \text{in } D(0, \infty].
\end{align}
In particular, as $n \rightarrow \infty$
\begin{align}
\label{eq:ordstat}
\frac{R_{([k_n])}(n)}{b(n/k_n)} \xrightarrow{p} 1.
\end{align}
Additionally, 
\begin{align}
\label{eq:jointnuord}
\left( \frac{1}{k_n} \sum_{i = 1}^n \epsilon_{R_i(n)/b(n/k_n)}, \frac{R_{([k_n])}(n)}{b(n/k_n)} \right) \Rightarrow \left( \nu_\alpha, 1 \right) \qquad \text{in } M_+((0, \infty] \times (0, \infty)),
\end{align}
as $n \rightarrow \infty$. \\
\textit{Step 2}: We now prove the weak convergence of $\hat{\nu}(\cdot)$ in \eqref{eq:weakconvDhat}. We consider an operator $T: M_+((0, \infty]) \times (0, \infty) \mapsto M_+((0, \infty])$ defined by
\begin{align}
T(\mu, c)(A) = \mu(c, A). 
\end{align}
It is shown in the proof of Theorem 4.2 in \cite{resnick2007heavy} that $T$ is continuous at $(\nu_\alpha, 1)$. Hence, applying the continuous mapping theorem to \eqref{eq:jointnuord} gives \eqref{eq:weakconvDhat}. \\
\textit{Step 3}: We now conclude by proving consistency of the Hill estimator. Observe that 
\begin{align*}
H_{k_n, n} = \int_1^\infty \hat{\nu}_n(y, \infty] \frac{dy}{y}. 
\end{align*}
The mapping $f \in D(0, \infty] \mapsto \int_1^M f(y) \frac{dy}{y} \in \mathbb{R}_+$ is almost surely continuous so that \eqref{eq:weakconvDhat} implies that
\begin{align*}
\int_1^M \hat{\nu}_n(y, \infty] \frac{dy}{y} \xrightarrow{p} \int_1^M \nu_\alpha(y, \infty] \frac{dy}{y},
\end{align*}
as $n \rightarrow \infty$. Hence by the second converging together theorem, it suffices to show that for any $\epsilon > 0$
\begin{align*}
\lim_{M \rightarrow \infty} \limsup_{n \rightarrow \infty}\mathbb{P}\left(\int_M^\infty \hat{\nu}_n(y, \infty] \frac{dy}{y} > \epsilon \right) =0,
\end{align*}
or, due to \eqref{eq:ordstat},
\begin{align*}
\lim_{M \rightarrow \infty} \limsup_{n \rightarrow \infty}\mathbb{P}\left(\int_M^\infty \hat{\nu}_n(y, \infty] \frac{dy}{y} > \epsilon, \left| \frac{R_{([k_n y])}(n)}{b(n/k_n)} - 1\right| \leq \eta \right) =0,
\end{align*}
for some $\eta > 0$.  When $\left| \frac{R_{([k_n y])}(n)}{b(n/k_n)} - 1\right| \leq \eta$, however,
\begin{align*}
\hat{\nu}_n(y, \infty] \leq \nu_n(y(1-\eta), \infty].
\end{align*}
Hence by Markov's inequality, it suffices to show that
\begin{align}
\label{eq:expnts}
\lim_{M \rightarrow \infty} \limsup_{n \rightarrow \infty} \mathbb{E}\left[ \int_M^\infty \hat{\nu}_n(y, \infty] \frac{dy}{y} \right] = 0.
\end{align}
See that
\begin{align*}
\mathbb{E}\left[ \int_M^\infty \hat{\nu}_n(y, \infty] \frac{dy}{y} \right] \leq& \mathbb{E}\left[ \int_M^\infty \nu_n(y(1-\eta), \infty] \frac{dy}{y} \right] \\
=& \mathbb{E}\left[ \int_{M(1-\eta)}^\infty \nu_n(y, \infty] \frac{dy}{y} \right] \\
=& \int_{M(1-\eta)}^\infty \frac{n}{k_n}\mathbb{P}\left(R_1 > yb(n/k_n) \right) \frac{dy}{y} \\
\rightarrow& \int_{M(1-\eta)}^\infty y^{-\alpha - 1}dy =  \frac{1}{\alpha((1 - \eta)M)^\alpha},
\end{align*}
which tends to $0$ as $M \rightarrow \infty$. Hence \eqref{eq:expnts} holds and the proof is complete.
\end{proof}


\begin{funding}
D. Cirkovic is supported by NSF Grant DMS-2210735. T. Wang is supported by the National Natural Science Foundation of China Grant 12301660 and the Science and Technology Commission of Shanghai Municipality Grant 23JC1400700.
\end{funding}



\bibliographystyle{imsart-number} 
\bibliography{InhomogeneousMRV.bib}     

\end{document}